\newtheorem{theorem}{Theorem}
\newtheorem{lemma}{Lemma}
\newtheorem{corollary}{Corollary}
\newtheorem{proposition}{Proposition}
\def\End{\mathop{\mathrm{End}}\nolimits}
\def\Hom{\mathop{\mathrm{Hom}}\nolimits}
\begin{document}
\title[Zero-Energy Fields on Complex Projective Space]
{Zero-Energy Fields on\\ Complex Projective Space}
\author{Michael Eastwood}
\address{Mathematical Sciences Institute, Australian National 
University,\newline ACT 0200, Australia}
\email{meastwoo@member.ams.org}
\author{Hubert Goldschmidt}
\address{Department of Mathematics, Columbia University,\newline 
New York, NY 10027, USA}
\email{hg@math.columbia.edu}
\subjclass{Primary 53C65; Secondary 53B35, 53D05, 58J10, 58J70.}
\keywords{Integral geometry, X-ray transform.}
\begin{abstract}
We consider complex projective space with its Fubini--Study metric and the 
X-ray transform defined by integration over its geodesics. We identify the 
kernel of this transform acting on symmetric tensor fields.
\end{abstract}
\renewcommand{\subjclassname}{\textup{2000} Mathematics Subject Classification}
\maketitle
\renewcommand{\thefootnote}{}
\footnotetext{Some of this work was done during the 2006 Summer Program at the
Institute for Mathematics and its Applications at the University of Minnesota.
The authors would like to thank the IMA for hospitality during this time. 
Eastwood is supported by the Australian Research Council.}

\section{Introduction}
Suppose $\omega_{ab\cdots c}$ is a smooth symmetric covariant tensor field
defined on a Riemannian manifold~$M$. Suppose $\gamma$ is a smooth oriented
curve on $M$ joining points $p$ and~$q$. Let $X^a$ denote the unit vector field
defined along $\gamma$ and tangent to $\gamma$ consistent with its orientation.
We obtain a real number $\int_\gamma\omega_{ab\cdots c}$ by integrating the
function $X^aX^b\cdots X^c\omega_{ab\cdots c}$ along $\gamma$ with respect to
arc-length. 

Suppose that $\phi_{b\cdots c}$ is a symmetric covariant tensor field such 
that $\omega_{ab\cdots c}=\nabla_{(a}\phi_{b\cdots c)}$ where $\nabla_a$ is 
the Levi-Civita connection and round brackets denote symmetrisation over the 
indices they enclose. Suppose $\gamma$ is a geodesic. This means that 
$X^a\nabla_a X^b=0$ and, in this case, 
\[X^a\nabla_a(X^b\cdots X^c\phi_{b\cdots c})=
X^aX^b\cdots X^c\nabla_{(a}\phi_{b\cdots c)}=
X^aX^b\cdots X^c\omega_{ab\cdots c}.\]
Therefore 
$\int_\gamma\omega_{ab\cdots c}= \big[X^b\cdots X^c\phi_{b\cdots c}\big]_p^q$
and, in particular, if $\gamma$ is a closed geodesic, then
$\int_\gamma\omega_{ab\cdots c}=0$. On complex projective space
${\mathbb{CP}}_n$ with its standard Fubini--Study metric~\cite{besse,cannas},
all geodesics are closed and the {\em X-ray transform\/} on symmetric tensor 
fields associates to $\omega_{ab\cdots c}$ the function
\[\gamma\longmapsto\int_\gamma\omega_{ab\cdots c}\]
defined on the space of geodesics on~${\mathbb{CP}}_n$. We shall refer to 
fields in the kernel of this transform as having {\em zero energy}. We have 
just observed that fields of the form $\nabla_{(a}\phi_{b\cdots c)}$ have zero 
energy. The main aim of this article is to prove the converse, namely
\begin{theorem}\label{maintheorem} On ${\mathbb{CP}}_n$ for $n\geq 2$, a smooth
symmetric covariant tensor field $\omega_{ab\cdots c}$ of valence $p\geq 1$
having zero energy must be of the form $\nabla_{(a}\phi_{b\cdots c)}$ for some
smooth symmetric field $\phi_{b\cdots c}$ of valence $p-1$.
\end{theorem}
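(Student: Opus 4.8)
The plan is to recast the vanishing of the X-ray transform as a cohomological equation on the unit cosphere bundle and then reduce the \emph{fibre degree} of its solution. Write $\pi\colon S^*M\to M$ for the unit cosphere bundle of $M=\mathbb{CP}_n$, let $X$ be the generator of the geodesic flow, and let $F$ be the function on $S^*M$ obtained by contracting $\pi^*\omega_{ab\cdots c}$ with the tautological unit covector, i.e.\ the integrand $X^aX^b\cdots X^c\omega_{ab\cdots c}$ of the Introduction. Because every geodesic of $\mathbb{CP}_n$ is closed and all of them have the same length, the geodesic flow is periodic with constant period, so the equation $XG=F$ admits a smooth solution $G$ on $S^*M$ precisely when $F$ integrates to zero over every orbit --- which is exactly the zero-energy hypothesis. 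Thus the problem becomes: given smooth $G$ with $XG=F$, show that $\omega=\nabla_{(a}\phi_{b\cdots c)}$. The field $\phi$ should be extracted from $G$: if $G$ can be taken to be a \emph{fibre polynomial of degree at most $p-1$}, then the identity displayed in the Introduction identifies $XG$ with the symbol of a symmetrised gradient, and reading off the top homogeneous part yields $\phi$, the subleading harmonics being reconciled through the trace decomposition.

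Second, I would run a fibrewise harmonic analysis in the style of Guillemin--Kazhdan. Decompose functions on $S^*M$ by their vertical spherical-harmonic degree, $G=\sum_{k\ge0}G_k$, and split the geodesic vector field accordingly as $X=X_++X_-$, where $X_+$ raises the degree by one and $X_-$ lowers it by one; here $X_+$ is, up to isomorphism, the trace-free symmetrised covariant derivative and so has injective principal symbol. The integrand $F$ has vertical degrees $\le p$, of parity $p$, so the equation $XG=F$ becomes the recursion $X_+G_{m-1}+X_-G_{m+1}=F_m$, with $F_m=0$ for $m>p$. The aim is to show $G_k=0$ for all $k\ge p$, so that $G$ has degree $\le p-1$. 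Since $G$ is smooth its modes $G_k$ decay rapidly, and using the injectivity and ellipticity of $X_+$ together with the relations $X_+G_{m-1}+X_-G_{m+1}=0$ for $m>p$, one propagates the vanishing downward from arbitrarily high degree.

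The main obstacle is the bottom of this induction --- the finitely many low-degree modes where injectivity of $X_+$ can fail. On a \emph{positively} curved manifold such as $\mathbb{CP}_n$ the naive Pestov/energy identity carries the wrong sign, so injectivity in low degrees is not automatic and must instead be forced by the special geometry: the K\"ahler structure and constant holomorphic sectional curvature, the explicit periodic structure of the geodesic flow, and the hypothesis $n\ge2$. Concretely, the residual finite-dimensional obstruction is controlled by decomposing the relevant bundles over $\mathbb{CP}_n=\mathrm{SU}(n+1)/\mathrm{S}(\mathrm U(1)\times\mathrm U(n))$ into $\mathrm{SU}(n+1)$-isotypic components; since both the X-ray transform and the symmetrised gradient $\nabla_{(a}\phi_{b\cdots c)}$ are equivariant, Schur's lemma reduces the matching of $\ker$ and $\im$ to a representation-by-representation rank computation, which I would anchor by restricting to a totally geodesic $\mathbb{CP}_1\cong S^2$ and invoking the classical result there. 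Carrying out this bookkeeping --- and verifying that the surviving low-degree modes are exactly those produced by a genuine potential $\phi$ --- is the crux, and the place where I expect the real work and the essential use of $n\ge2$ to lie.
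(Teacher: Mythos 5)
Your proposal is a programme rather than a proof: the two places where you locate ``the real work'' are precisely the places where the argument, as sketched, does not close. First, the downward propagation of vanishing of the vertical modes $G_k$ for $k\geq p$ is unjustified. Smoothness of $G$ gives rapid \emph{decay} of the modes, not eventual vanishing, so there is no top degree from which to start the descent; the standard Guillemin--Kazhdan mechanism that converts the recursion $X_+G_{m-1}+X_-G_{m+1}=0$ into vanishing is the Pestov energy identity, which, as you yourself note, carries the wrong sign on a positively curved manifold. Nothing in the proposal replaces it. Second, the residual low-degree analysis --- matching the kernel of the transform with the image of the symmetrised gradient isotypic component by isotypic component --- is exactly the harmonic analysis on ${\mathbb{CP}}_n$ that Tsukamoto's original proof required and that this paper is expressly designed to avoid; declaring it a ``rank computation'' does not carry it out. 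Worse, the proposed anchor is the wrong one: restricting to a totally geodesic ${\mathbb{CP}}_1\cong S^2$ gives no leverage, because on the round $2$-sphere the analogous statement \emph{fails} (the Funk/X-ray transform over great circles has a large kernel beyond potentials, which is why all results in this circle of ideas require $n\geq 2$ and why ${\mathbb{CP}}_1$ is excluded from the theorem). Schur's lemma plus equivariance cannot rescue this without an actual computation of which ${\mathrm{SU}}(n+1)$-types occur in the kernel, i.e.\ without the harmonic analysis.

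For contrast, the paper's reduction goes through a different family of totally geodesic submanifolds: the ${\mathrm{SU}}(n+1)$-translates of ${\mathbb{RP}}_n\hookrightarrow{\mathbb{CP}}_n$, which are Lagrangian and contain every geodesic of ${\mathbb{CP}}_n$. Zero energy on ${\mathbb{CP}}_n$ implies zero energy of each pullback $\iota^*\omega$, where the theorem is already known; the known case yields the local integrability condition $\nabla^{(\ell)}(\iota^*\omega)=0$ from the projective BGG complex on ${\mathbb{RP}}_n$, and a pointwise linear-algebra statement about tensors vanishing on all Lagrangian subspaces (Proposition~\ref{One}) upgrades this to $\nabla_\perp^{(\ell)}\omega=0$ on ${\mathbb{CP}}_n$. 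The remaining global step --- that $\ker\nabla_\perp^{(\ell)}=\operatorname{im}\nabla$ --- is proved not by fibrewise Fourier analysis on $S^*M$ but by prolonging to a coupled connection on an auxiliary bundle $Y_\perp^{\ell-1}{\mathbb{U}}$ whose curvature is a multiple of the K\"ahler form, splitting off the flat part (handled by simple connectivity and the elementary Theorem~\ref{desired}) from the part where the curvature endomorphism is invertible (handled by an explicit algebraic inversion). If you want to salvage your approach, the essential missing ingredient is a substitute for negative curvature in controlling the high and low vertical modes, and no such substitute is offered.
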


This theorem was first proved for $p=1$ in~\cite{gg5}. In case $p=2$, it was
first proved by Tsukamoto~\cite{t}; other proofs in this case can be found
in~\cite{gg2,gg3} and Chapter~III of~\cite{gg6}. Tsukamoto's proof for $p=2$
heavily relied on harmonic analysis on~${\mathbb{CP}}_2$.
In~\cite[Theorem~3.40]{gg6} harmonic analysis on complex projective space was
eliminated from the proof of case $p=1$ (and already in~\cite{gg3} harmonic
analysis was severely reduced for the case $p=2$). In a proof of
Theorem~\ref{maintheorem} for $n=2$ given in~\cite{rims}, harmonic analysis on
${\mathbb{CP}}_2$ arose in the guise of twistor theory; this proof relied on
the Fubini-Study metric on ${\mathbb{CP}}_2$ being (anti-)self-dual and was
therefore limited to the case $n=2$. In this article, our proof is uniform
for all $n$ and~$p$; it completely eliminates any harmonic analysis
on~${\mathbb{CP}}_n$.

Inspired by a remark of J.-P. Demailly (cf.~\cite[Introduction]{gg3}) in 
case $p=1$,
our plan is to deduce Theorem~\ref{maintheorem} from the corresponding statement for real projective
space ${\mathbb{RP}}_n$ with its usual round metric (inherited from the round
$n$-sphere). The truth of this statement for ${\mathbb{RP}}_n$ has been shown
by various means~\cite{be,srni,e,gg6,g1,m1,m2}. (The precise method of proof
for ${\mathbb{RP}}_n$ will not enter our discussion for~${\mathbb{CP}}_n$.) The
point is that the standard embedding
${\mathbb{RP}}_n\hookrightarrow{\mathbb{CP}}_n$ induced by
${\mathbb{R}}^{n+1}\hookrightarrow{\mathbb{C}}^{n+1}$ is totally geodesic.
Furthermore, all translates of this standard
${\mathbb{RP}}_n\hookrightarrow{\mathbb{CP}}_n$ by ${\mathrm{SU}}(n+1)$, the
isometry group of~${\mathbb{CP}}_n$, are totally geodesic and so this provides
many submanifolds on which the kernel of the X-ray transform is known. It is
immediate, for example, that injectivity of the X-ray transform on smooth
functions on ${\mathbb{RP}}_n$ implies that the same is true
on~${\mathbb{CP}}_n$. More generally, we shall require some algebraic link
between tensors on ${\mathbb{CP}}_n$ and on these embedded real projective
spaces. Such a link is the subject of the following two sections.

\section{Some linear algebra} Let us call a tensor bundle on a smooth
$2n$-dimensional manifold $M$ {\em irreducible\/} if and only if it is induced
from the co-frame bundle by an irreducible representation of
${\mathrm{SL}}(2n,{\mathbb{R}})$. Sections of such a bundle will be called 
{\em irreducible tensors\/}. Now the irreducible representations of
${\mathrm{SL}}(2n,{\mathbb{R}})$ are classified by their highest
weight~\cite{fh}, which we may write as an integral combination of fundamental
weights, the coefficients of which may be written over the corresponding nodes
of the Dynkin diagram. Let us restrict attention to those tensor bundles
arising from representations of the form
\begin{equation}\label{nottootall}\begin{picture}(160,10)
\put(0,0){\line(1,0){30}}
\put(40,0){\makebox(0,0){$\cdots$}}
\put(50,0){\line(1,0){60}}
\put(120,0){\makebox(0,0){$\cdots$}}
\put(130,0){\line(1,0){30}}
\put(0,0){\makebox(0,0){$\bullet$}}
\put(0,7){\makebox(0,0){\scriptsize$a_1$}}
\put(20,0){\makebox(0,0){$\bullet$}}
\put(20,7){\makebox(0,0){\scriptsize$a_2$}}
\put(60,0){\makebox(0,0){$\bullet$}}
\put(60,7){\makebox(0,0){\scriptsize$a_{n-1}$}}
\put(80,0){\makebox(0,0){$\bullet$}}
\put(80,7){\makebox(0,0){\scriptsize$a_n$}}
\put(100,0){\makebox(0,0){$\bullet$}}
\put(100,7){\makebox(0,0){\scriptsize$0$}}
\put(140,0){\makebox(0,0){$\bullet$}}
\put(140,7){\makebox(0,0){\scriptsize$0$}}
\put(160,0){\makebox(0,0){$\bullet$}}
\put(160,7){\makebox(0,0){\scriptsize$0$}}
\end{picture}\;.\end{equation}
These representations have the property that their restriction to the subgroup  
${\mathrm{Sp}}(2n,{\mathbb{R}})\subset{\mathrm{SL}}(2n,{\mathbb{R}})$ has a 
leading term
\begin{equation}\label{leadingterm}\begin{picture}(160,10)
\put(0,0){\line(1,0){30}}
\put(40,0){\makebox(0,0){$\cdots$}}
\put(50,0){\line(1,0){60}}
\put(120,0){\makebox(0,0){$\cdots$}}
\put(130,0){\line(1,0){30}}
\put(0,0){\makebox(0,0){$\bullet$}}
\put(0,7){\makebox(0,0){\scriptsize$a_1$}}
\put(20,0){\makebox(0,0){$\bullet$}}
\put(20,7){\makebox(0,0){\scriptsize$a_2$}}
\put(60,0){\makebox(0,0){$\bullet$}}
\put(60,7){\makebox(0,0){\scriptsize$a_{n-1}$}}
\put(80,0){\makebox(0,0){$\bullet$}}
\put(80,7){\makebox(0,0){\scriptsize$a_n$}}
\put(100,0){\makebox(0,0){$\bullet$}}
\put(100,7){\makebox(0,0){\scriptsize$0$}}
\put(140,0){\makebox(0,0){$\bullet$}}
\put(140,7){\makebox(0,0){\scriptsize$0$}}
\put(160,0){\makebox(0,0){$\bullet$}}
\put(160,7){\makebox(0,0){\scriptsize$0$}}
\end{picture}\enskip\raisebox{-2pt}{$=$}\enskip
\begin{picture}(85,10)
\put(0,0){\line(1,0){30}}
\put(40,0){\makebox(0,0){$\cdots$}}
\put(50,0){\line(1,0){10}}
\put(60,1){\line(1,0){26}}
\put(60,-1){\line(1,0){26}}
\put(0,0){\makebox(0,0){$\bullet$}}
\put(0,7){\makebox(0,0){\scriptsize$a_1$}}
\put(20,0){\makebox(0,0){$\bullet$}}
\put(20,7){\makebox(0,0){\scriptsize$a_2$}}
\put(60,0){\makebox(0,0){$\bullet$}}
\put(60,7){\makebox(0,0){\scriptsize$a_{n-1}$}}
\put(86,0){\makebox(0,0){$\bullet$}}
\put(86,7){\makebox(0,0){\scriptsize$a_n$}}
\put(73,0){\makebox(0,0){$\langle$}}
\end{picture}\enskip\;
\raisebox{-2pt}{$\oplus\;\cdots\;,$}\end{equation}
which is easily described in terms of tensors, namely
\begin{equation}\label{totallytracefreepart}
\psi_{abc\cdots d}=\psi_{abc\cdots d}^\perp+\mbox{terms of the form }
J_{ab}\bowtie\theta_{c\cdots d}.\end{equation}
Here, $J_{ab}$ is the non-degenerate skew form preserved by
${\mathrm{Sp}}(2n,{\mathbb{R}})$, the tensor $\psi_{abc\cdots d}^\perp$
satisfies the same symmetries as does $\psi_{abc\cdots d}$ but is, in addition,
totally trace-free with respect to the inverse form $J^{ab}$, and 
$\bowtie$ is some
symmetry operation on the indices $abc\cdots d$. Suppose, for example, that
$R_{abcd}$ has Riemann tensor symmetries. The relevant branching
for ${\mathrm{Sp}}(2n,{\mathbb{R}})\subset{\mathrm{SL}}(2n,{\mathbb{R}})$ is
\[\begin{picture}(75,10)
\put(0,0){\line(1,0){35}}
\put(45,0){\makebox(0,0){$\cdots$}}
\put(55,0){\line(1,0){20}}
\put(0,0){\makebox(0,0){$\bullet$}}
\put(0,7){\makebox(0,0){\scriptsize$0$}}
\put(15,0){\makebox(0,0){$\bullet$}}
\put(15,7){\makebox(0,0){\scriptsize$2$}}
\put(30,0){\makebox(0,0){$\bullet$}}
\put(30,7){\makebox(0,0){\scriptsize$0$}}
\put(60,0){\makebox(0,0){$\bullet$}}
\put(60,7){\makebox(0,0){\scriptsize$0$}}
\put(75,0){\makebox(0,0){$\bullet$}}
\put(75,7){\makebox(0,0){\scriptsize$0$}}
\end{picture}\enskip\raisebox{-2pt}{$=$}\enskip
\begin{picture}(60,10)
\put(0,0){\line(1,0){20}}
\put(30,0){\makebox(0,0){$\cdots$}}
\put(40,0){\line(1,0){5}}
\put(45,1){\line(1,0){15}}
\put(45,-1){\line(1,0){15}}
\put(0,0){\makebox(0,0){$\bullet$}}
\put(0,7){\makebox(0,0){\scriptsize$0$}}
\put(15,0){\makebox(0,0){$\bullet$}}
\put(15,7){\makebox(0,0){\scriptsize$2$}}
\put(45,0){\makebox(0,0){$\bullet$}}
\put(45,7){\makebox(0,0){\scriptsize$0$}}
\put(60,0){\makebox(0,0){$\bullet$}}
\put(60,7){\makebox(0,0){\scriptsize$0$}}
\put(52,0){\makebox(0,0){$\langle$}}
\end{picture}\enskip\;
\raisebox{-2pt}{$\oplus$}\;\enskip
\begin{picture}(60,10)
\put(0,0){\line(1,0){20}}
\put(30,0){\makebox(0,0){$\cdots$}}
\put(40,0){\line(1,0){5}}
\put(45,1){\line(1,0){15}}
\put(45,-1){\line(1,0){15}}
\put(0,0){\makebox(0,0){$\bullet$}}
\put(0,7){\makebox(0,0){\scriptsize$0$}}
\put(15,0){\makebox(0,0){$\bullet$}}
\put(15,7){\makebox(0,0){\scriptsize$1$}}
\put(45,0){\makebox(0,0){$\bullet$}}
\put(45,7){\makebox(0,0){\scriptsize$0$}}
\put(60,0){\makebox(0,0){$\bullet$}}
\put(60,7){\makebox(0,0){\scriptsize$0$}}
\put(52,0){\makebox(0,0){$\langle$}}
\end{picture}\enskip\;
\raisebox{-2pt}{$\oplus$}\;\enskip
\begin{picture}(60,10)
\put(0,0){\line(1,0){20}}
\put(30,0){\makebox(0,0){$\cdots$}}
\put(40,0){\line(1,0){5}}
\put(45,1){\line(1,0){15}}
\put(45,-1){\line(1,0){15}}
\put(0,0){\makebox(0,0){$\bullet$}}
\put(0,7){\makebox(0,0){\scriptsize$0$}}
\put(15,0){\makebox(0,0){$\bullet$}}
\put(15,7){\makebox(0,0){\scriptsize$0$}}
\put(45,0){\makebox(0,0){$\bullet$}}
\put(45,7){\makebox(0,0){\scriptsize$0$}}
\put(60,0){\makebox(0,0){$\bullet$}}
\put(60,7){\makebox(0,0){\scriptsize$0$}}
\put(52,0){\makebox(0,0){$\langle$}}
\end{picture}
\]
and may be written explicitly as 
\begin{equation}\label{symplecticRiemann}
\raisebox{3pt}{\makebox[0pt]{$\begin{array}{rcl}R_{abcd}&\!\!=\!\!&X_{abcd}\\
&&\hspace*{-8pt}{}+\Psi_{ac}J_{bd}-\Psi_{bc}J_{ad}
-\Psi_{ad}J_{bc}+\Psi_{bd}J_{ac}
+2\Psi_{ab}J_{cd}+2\Psi_{cd}J_{ab}\\
&&\hspace*{-8pt}{}+L(J_{ac}J_{bd}-J_{bc}J_{ad}+2J_{ab}J_{cd}),
\end{array}$}}\end{equation}
where $X_{abcd}$ has Riemann tensor symmetries and is trace-free whilst 
$\Psi_{ab}$ is skew and trace-free (where `trace-free' means with respect 
to $J^{ab}$). It is the symplectic counterpart to the well-known decomposition 
\[R_{abcd}=W_{abcd}+\Phi_{ac}g_{bd}-\Phi_{bc}g_{ad}
-\Phi_{ad}g_{bc}+\Phi_{bd}g_{ac}
+K(g_{ac}g_{bd}-g_{bc}g_{ad})\]
of the Riemann tensor under 
${\mathrm{SO}}(2n,{\mathbb{R}})\subset{\mathrm{SL}}(2n,{\mathbb{R}})$.
\begin{proposition}\label{One}
Suppose $\psi_{abc\cdots d}$ is an irreducible covariant tensor under
${\mathrm{SL}}(2n,{\mathbb{R}})$ with symmetries of the
form~{\rm(\ref{nottootall})}. Then its totally trace-free part 
$\psi_{abc\cdots d}^\perp$, defined by {\rm(\ref{totallytracefreepart})}, 
vanishes if and only if the pullback of $\psi_{abc\cdots d}$ to every 
Lagrangian subspace of ${\mathbb{R}}^{2n}$ vanishes.
\end{proposition}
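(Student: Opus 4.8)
The plan is to play the decomposition (\ref{totallytracefreepart}) against the defining feature of a Lagrangian subspace. First I would note that pulling $\psi_{abc\cdots d}$ back to a Lagrangian subspace $L\subset{\mathbb R}^{2n}$ annihilates every term of the form $J_{ab}\bowtie\theta_{c\cdots d}$, since by definition $J_{ab}$ restricts to zero on $L$ and each such term carries at least one explicit factor of $J$. Hence the pullback of $\psi_{abc\cdots d}$ to $L$ agrees with the pullback of its totally trace-free part $\psi^\perp_{abc\cdots d}$. This already disposes of the ``only if'' direction: if $\psi^\perp_{abc\cdots d}=0$ then $\psi_{abc\cdots d}$ is a sum of $J$-terms and its pullback to every Lagrangian vanishes. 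The same identity reduces the ``if'' direction to a single assertion, namely that a totally trace-free tensor of symmetry type (\ref{nottootall}) whose pullback to every Lagrangian subspace vanishes must itself be zero.

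To prove this assertion I would argue by irreducibility. The totally trace-free tensors of the prescribed symmetry type are exactly those carried by the leading symplectic summand in (\ref{leadingterm}), and this is an \emph{irreducible} representation $V$ of ${\mathrm{Sp}}(2n,{\mathbb R})$ precisely because the Young diagram (\ref{nottootall}) has at most $n$ rows \cite{fh}. Now every $g\in{\mathrm{Sp}}(2n,{\mathbb R})$ carries Lagrangian subspaces to Lagrangian subspaces (indeed transitively), and $(g\cdot\psi)|_L$ is the transform of $\psi|_{g^{-1}L}$; therefore $\psi$ pulls back to zero on every Lagrangian if and only if $g\cdot\psi$ does. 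It follows that the subspace $K\subset V$ of tensors pulling back to zero on every Lagrangian is an ${\mathrm{Sp}}(2n,{\mathbb R})$-submodule, so by irreducibility $K$ is either $0$ or all of~$V$.

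It remains only to exhibit one nonzero element of $V$ with nonzero pullback to some Lagrangian. For this I would fix a symplectic basis $e_1,\dots,e_n,f_1,\dots,f_n$, take the standard Lagrangian $L_0={\mathrm{span}}(e_1,\dots,e_n)$, and start from any nonzero tensor $t_{i_1\cdots i_p}$ on ${\mathbb R}^n\cong L_0$ of the ${\mathrm{GL}}(n,{\mathbb R})$-symmetry type $\lambda$ recorded by (\ref{nottootall}); such a $t$ exists because $\lambda$ has at most $n$ rows. Extending $t$ by zero to a tensor $\tilde t_{abc\cdots d}$ on ${\mathbb R}^{2n}$, declared to vanish whenever any index takes an $f$-value, yields a tensor of the same symmetry type whose pullback to $L_0$ is $t\neq0$. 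The crucial point is that $\tilde t$ is automatically totally trace-free: since $J^{ab}$ pairs an $e$-index only with an $f$-index, any contraction $J^{ab}\tilde t_{ab\cdots}$ forces one slot of $\tilde t$ to be of $f$-type, where $\tilde t$ vanishes. Thus $\tilde t\in V$ is nonzero with nonzero pullback to $L_0$, so $K\neq V$ and hence $K=0$, which is what was needed.

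The verifications I am deferring are the compatibility of pullback with the group action and the symmetry-type bookkeeping for the extension-by-zero $\tilde t$ (a permutation of indices preserves the condition that all indices be of $e$-type, so $\tilde t$ inherits the symmetries of $t$). The one genuinely substantive input is the classical irreducibility of the trace-free symplectic tensors of type (\ref{leadingterm}), on which the whole dichotomy rests. I expect the only real subtlety to be the automatic trace-freeness of $\tilde t$, which hinges on the elementary but essential fact that the symplectic form pairs the two halves of a symplectic basis and so vanishes identically on the Lagrangian $L_0$.
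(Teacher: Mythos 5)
Your proof is correct and follows essentially the same strategy as the paper: the $J$-terms die on Lagrangian subspaces, the vanishing condition is manifestly ${\mathrm{Sp}}(2n,{\mathbb{R}})$-invariant, and irreducibility of the leading trace-free summand reduces the whole statement to exhibiting a single tensor of the given symmetry type with nonzero restriction to one Lagrangian. The only difference is in the choice of witness: the paper restricts the highest weight vector $(\omega^1)^{\otimes a_1}\otimes\cdots\otimes(\omega^n)^{\otimes a_n}$ to the standard Lagrangian, which is in fact a special case of your extension-by-zero construction (it is built entirely from covectors supported on that Lagrangian, hence is automatically trace-free for exactly the reason you give), so the two arguments coincide in substance.
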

\begin{proof}
Considering the right hand side of~(\ref{totallytracefreepart}), it is clear
that that all terms except $\psi_{abc\cdots d}^\perp$ vanish when restricted to
a Lagrangian subspace simply because $J_{ab}$ has this property. Conversely,
requiring that $\psi_{abc\cdots d}$ vanish on all Lagrangian subspaces is a
manifestly ${\mathrm{Sp}}(2n,{\mathbb{R}})$-invariant restriction. Bearing in
mind that the leading term of (\ref{leadingterm}) is irreducible, it follows
that either our proposition is true or all tensors with symmetries of the form
(\ref{nottootall}) vanish on all Lagrangian subspaces. If we firstly consider
fundamental representations of the form (\ref{nottootall}), then we are done 
because the corresponding tensors are precisely the $k$-forms for $k\leq n$. 
More specifically, we can choose a basis 
$\{e_1,e_2,\cdots,e_n,e_{n+1},e_{n+2},\cdots,e_{2n}\}$
of ${\mathbb{R}}^{2n}$ such that 
\[J_{ab}=\left[\begin{array}{c|c}0&{\mathrm{Id}}\\ \hline
\rule{0pt}{10pt}-{\mathrm{Id}}&0
\end{array}\right]\]
and consider the Lagrangian subspace
\[\Pi\equiv{\mathrm{span}}\{e_1,e_2,\cdots,e_n\},\]
noticing that the highest weight vector $\omega^k\in\Lambda^k{\mathbb{R}}^{2n}$
restricts to a non-zero form on~$\Pi$. The general case follows because  
\[(\omega^1)^{\otimes a_i}\otimes(\omega^2)^{\otimes a_2}\otimes\cdots
\otimes(\omega^n)^{\otimes a_n}\in
\enskip
{\setlength{\unitlength}{.9pt}
\begin{picture}(160,10)
\put(0,0){\line(1,0){30}}
\put(40,0){\makebox(0,0){$\cdots$}}
\put(50,0){\line(1,0){60}}
\put(120,0){\makebox(0,0){$\cdots$}}
\put(130,0){\line(1,0){30}}
\put(0,0){\makebox(0,0){$\bullet$}}
\put(0,7){\makebox(0,0){\scriptsize$a_1$}}
\put(20,0){\makebox(0,0){$\bullet$}}
\put(20,7){\makebox(0,0){\scriptsize$a_2$}}
\put(60,0){\makebox(0,0){$\bullet$}}
\put(60,7){\makebox(0,0){\scriptsize$a_{n-1}$}}
\put(80,0){\makebox(0,0){$\bullet$}}
\put(80,7){\makebox(0,0){\scriptsize$a_n$}}
\put(100,0){\makebox(0,0){$\bullet$}}
\put(100,7){\makebox(0,0){\scriptsize$0$}}
\put(140,0){\makebox(0,0){$\bullet$}}
\put(140,7){\makebox(0,0){\scriptsize$0$}}
\put(160,0){\makebox(0,0){$\bullet$}}
\put(160,7){\makebox(0,0){\scriptsize$0$}}
\end{picture}}\]
is non-zero when restricted to~$\Pi$. 
\end{proof}

\section{Symplectic geometry on complex projective space}
In this article, complex projective space may always be regarded as a Riemannian 
manifold with its Fubini-Study metric, which we shall denote by~$g_{ab}$. 
{From} this point of view
\begin{equation}\label{homogeneous}
{\mathbb{CP}}_n={\mathrm{SU}}(n+1)/
{\mathrm{S}}({\mathrm{U}}(1)\times {\mathrm{U}}(n)).\end{equation}
However, ${\mathbb{CP}}_n$ may also be viewed in other well-known guises as
follows.
\[\begin{tabular}{|c|c|c|c|}\hline
Structure&Quantity&Name&Formula\\ \hline\hline
Riemannian &$g_{ab}$&Fubini-Study metric &
\rule{0pt}{12pt} $g_{ab}=J_a{}^cJ_{bc}$\\ \hline
complex &$J_a{}^b$ & complex structure &  
\rule{0pt}{12pt} $J_a{}^b=g^{bc}J_{ac}$\\ \hline
symplectic &$J_{ab}$ & K\"ahler form &  
\rule{0pt}{12pt} $J_{ab}=J_a{}^cg_{bc}$\\ \hline
\end{tabular}\]
The formul\ae\ show that any two of these structures determine the third. As
already remarked in the Introduction, there is a useful family of totally
geodesic embeddings ${\mathbb{RP}}_n\hookrightarrow{\mathbb{CP}}_n$ obtained
from the standard embedding by the action of the isometry
group~${\mathrm{SU}}(n+1)$. For want of a better terminology, let us refer to
these as {\em model embeddings}.
\begin{proposition}\label{Two}
Suppose that ${\mathbb{RP}}_n\hookrightarrow{\mathbb{CP}}_n$ is a model
embedding. Then $T_p{\mathbb{RP}}_n\hookrightarrow T_p{\mathbb{CP}}_n$ is a
Lagrangian subspace for all $p\in{\mathbb{RP}}_n$. Conversely, for each
$p\in{\mathbb{CP}}_n$ the model embeddings passing through $p$ are in 1--1
correspondence with the Lagrangian subspaces of~$T_p{\mathbb{CP}}_n$.
\end{proposition}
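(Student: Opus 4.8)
The plan is to treat the two directions separately: the forward one by realising the standard $\mathbb{RP}_n$ as the fixed-point set of an anti-holomorphic involution, and the converse by combining the homogeneity of $\mathbb{CP}_n$ with the rigidity of totally geodesic submanifolds. For the forward direction I would use complex conjugation $\sigma\colon[z_0:\cdots:z_n]\mapsto[\bar z_0:\cdots:\bar z_n]$, which is an isometry of $\mathbb{CP}_n$ whose fixed-point set is exactly the standard $\mathbb{RP}_n$. Being anti-holomorphic, $\sigma$ reverses the complex structure and hence sends the symplectic form $J_{ab}$ to $-J_{ab}$; that is, $J(d\sigma X,d\sigma Y)=-J(X,Y)$ for all tangent vectors $X,Y$. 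At a fixed point $p$ the differential $d\sigma_p$ restricts to the identity on $T_p\mathbb{RP}_n$, so for $X,Y\in T_p\mathbb{RP}_n$ this identity becomes $J(X,Y)=-J(X,Y)$, forcing $J_{ab}$ to vanish on $T_p\mathbb{RP}_n$. Since $T_p\mathbb{RP}_n$ has dimension $n$, exactly half that of $T_p\mathbb{CP}_n$, it is Lagrangian. For a general model embedding $g\cdot\mathbb{RP}_n$ with $g\in\mathrm{SU}(n+1)$, I would observe that $g$ is a holomorphic isometry and so preserves $J_{ab}$, whence it carries the Lagrangian subspace $T_p\mathbb{RP}_n$ to a Lagrangian subspace; this gives the first assertion.

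For the converse I would fix $p\in\mathbb{CP}_n$ and study the map $M\mapsto T_pM$ from model embeddings through $p$ to Lagrangian subspaces of $T_p\mathbb{CP}_n$, well defined by the first part, aiming to show it is a bijection. To prove surjectivity I would use homogeneity: taking $p=[1:0:\cdots:0]$ and identifying $T_p\mathbb{CP}_n$ with $\mathbb{C}^n$ in the affine chart, the standard $\mathbb{RP}_n$ is tangent to the standard Lagrangian $\mathbb{R}^n\subset\mathbb{C}^n$, and the isotropy group $\mathrm{S}(\mathrm{U}(1)\times\mathrm{U}(n))$ acts on $T_p\mathbb{CP}_n$ as the standard representation of $\mathrm{U}(n)$ on $\mathbb{C}^n$. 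Since $\mathrm{U}(n)$ acts transitively on the Lagrangian Grassmannian $\mathrm{U}(n)/\mathrm{O}(n)$, for any Lagrangian $L$ there is an isotropy element $h$ with $dh_p(\mathbb{R}^n)=L$; as $h$ fixes $p$, the translate $h\cdot\mathbb{RP}_n$ is then a model embedding through $p$ whose tangent space at $p$ is $L$.

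For injectivity I would invoke the rigidity of totally geodesic submanifolds: if model embeddings $M_1,M_2$ both pass through $p$ with $T_pM_1=T_pM_2=L$, then each $M_i$ is complete and totally geodesic, so its intrinsic exponential map is the restriction to $T_pM_i$ of the exponential map $\exp_p$ of $\mathbb{CP}_n$; hence $M_i=\exp_p(L)$ for both values of $i$, and therefore $M_1=M_2$. I expect the main obstacle to be this converse rather than the forward direction, which follows almost formally once the conjugation involution is in hand. The converse instead requires marshalling two ingredients at once, namely the transitivity of the isotropy action on Lagrangian subspaces and the uniqueness of the totally geodesic submanifold with a prescribed tangent space; the latter rigidity is the step to treat with care, since it is precisely where one uses that the model embeddings are genuinely complete and totally geodesic.
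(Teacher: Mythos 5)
Your proof is correct and follows essentially the same route as the paper: the forward direction amounts to observing that the model embeddings are totally real (you derive this from the conjugation involution, the paper from $J_{ab}$ being of type $(1,1)$), and the converse rests, exactly as in the paper, on the isotropy group acting on $T_p{\mathbb{CP}}_n$ through ${\mathrm{U}}(n)$ together with the transitivity of ${\mathrm{U}}(n)$ on the Lagrangian Grassmannian ${\mathrm{U}}(n)/{\mathrm{O}}(n)$. Your explicit injectivity argument via $M=\exp_p(T_pM)$ for complete totally geodesic submanifolds is a sound addition that the paper leaves implicit.
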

\begin{proof}
The K\"ahler form $J_{ab}$ is of type $(1,1)$ and therefore vanishes on any
totally real submanifold of~${\mathbb{CP}}_n$. In particular, it vanishes on
any model embedding. Thus, these embeddings are 
Lagrangian. Conversely, we may as well take $p$ to be the basepoint of 
${\mathbb{CP}}_n$ as the homogeneous space (\ref{homogeneous}), in which case
it is easily calculated that the isotropy group acts on $T_p{\mathbb{CP}}_n$ 
as 
\[{\mathrm{S}}({\mathrm{U}}(1)\times{\mathrm{U}}(n))\ni(\lambda,A)\mapsto
\lambda^{-1}A\mbox{ acting on }{\mathbb{C}}^n.\]
In particular, every transformation in ${\mathrm{U}}(n)$ is obtained in this 
way. Also note that the K\"ahler form is realised as the standard 
symplectic form on~${\mathbb{C}}^n$. Therefore, it remains to be seen that 
${\mathrm{U}}(n)$ acts transitively on the Lagrangian subspaces 
of~${\mathbb{C}}^n$. This is a well-known fact: see, e.g. 
\cite[Exercise~I.A.4(ii)]{h}. (Otherwise said, the Lagrangian Grassmannian may 
be realised as the homogeneous space~${\mathrm{U}}(n)/{\mathrm{O}}(n)$.)
\end{proof}

\begin{theorem}\label{minortheorem}
Let $\psi_{abc\cdots d}$ be an irreducible tensor on
${\mathbb{CP}}_n$ corresponding to a representation of\/
${\mathrm{SL}}(2n,{\mathbb{R}})$ of the form~{\rm(\ref{nottootall})}. Suppose
that $\psi_{abc\cdots d}$ vanishes when restricted to all model embeddings
${\mathbb{RP}}_n\hookrightarrow{\mathbb{CP}}_n$. Then its totally trace-free
part $\psi_{abc\cdots d}^\perp$ defined by {\rm(\ref{totallytracefreepart})}
vanishes.\end{theorem}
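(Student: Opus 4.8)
The plan is to reduce this theorem, which concerns a tensor \emph{field} on ${\mathbb{CP}}_n$, to the purely pointwise linear algebra already established in Proposition~\ref{One}. Since the conclusion $\psi_{abc\cdots d}^\perp=0$ is a pointwise assertion, it suffices to fix an arbitrary point $p\in{\mathbb{CP}}_n$ and to show that $\psi_{abc\cdots d}^\perp$ vanishes at~$p$. At this point the tangent space $T_p{\mathbb{CP}}_n$ carries the K\"ahler form $J_{ab}$ as a non-degenerate skew form, so $(T_p{\mathbb{CP}}_n,J_{ab})$ is a symplectic vector space isomorphic to $({\mathbb{R}}^{2n},J_{ab})$, and the value of $\psi_{abc\cdots d}$ at $p$ is an element of the corresponding irreducible ${\mathrm{SL}}(2n,{\mathbb{R}})$-representation of the form~(\ref{nottootall}). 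Proposition~\ref{One} therefore applies verbatim at~$p$: the totally trace-free part $\psi_{abc\cdots d}^\perp$ vanishes at $p$ if and only if the pullback of $\psi_{abc\cdots d}$ at $p$ to every Lagrangian subspace of $T_p{\mathbb{CP}}_n$ vanishes.

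First I would record the elementary but essential point that the pullback of a covariant tensor field to a submanifold, evaluated at a point $p$ of that submanifold, depends only on the tangent space to the submanifold at $p$ and agrees with the algebraic pullback of the value of $\psi_{abc\cdots d}$ at $p$ to that tangent subspace. Consequently, the hypothesis that $\psi_{abc\cdots d}$ vanishes on every model embedding ${\mathbb{RP}}_n\hookrightarrow{\mathbb{CP}}_n$ says precisely that, for every model embedding passing through $p$, the pullback of $\psi_{abc\cdots d}$ at $p$ to the tangent subspace $T_p{\mathbb{RP}}_n$ vanishes.

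Next I would invoke the converse half of Proposition~\ref{Two}, which asserts that, as the model embedding through $p$ varies, its tangent space $T_p{\mathbb{RP}}_n$ sweeps out \emph{every} Lagrangian subspace of $T_p{\mathbb{CP}}_n$. Combined with the preceding observation, this shows that the pullback of $\psi_{abc\cdots d}$ at $p$ to every Lagrangian subspace of $T_p{\mathbb{CP}}_n$ vanishes. That is exactly the condition required by the criterion of Proposition~\ref{One}, and so $\psi_{abc\cdots d}^\perp$ vanishes at~$p$. Since $p$ was arbitrary, $\psi_{abc\cdots d}^\perp$ vanishes identically.

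I do not anticipate a genuine obstacle, since the two preceding propositions carry out all the real work: Proposition~\ref{One} is the linear-algebraic engine and Proposition~\ref{Two} supplies the dictionary between Lagrangian subspaces and model embeddings. The only point requiring a little care is the bookkeeping that links the \emph{global} hypothesis (vanishing of the restriction of a tensor field to an embedded ${\mathbb{RP}}_n$) to the \emph{pointwise, purely algebraic} statement about restriction to a Lagrangian subspace; this is the routine fact about pullbacks of tensors noted above, and it is precisely what allows the two propositions to be chained together at each point.
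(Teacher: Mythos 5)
Your argument is correct and is exactly what the paper intends by its one-line proof (``Immediate by combining Propositions~\ref{One} and~\ref{Two}''): you work pointwise, use Proposition~\ref{Two} to identify the tangent spaces of model embeddings through $p$ with all Lagrangian subspaces of $T_p{\mathbb{CP}}_n$, and then apply Proposition~\ref{One}. The extra care you take in spelling out that pullback to a submanifold at a point depends only on the tangent subspace is exactly the right bookkeeping, and nothing further is needed.
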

\begin{proof} Immediate by combining Propositions~\ref{One} and~\ref{Two}. 
\end{proof}
\begin{corollary} A smooth two-form on ${\mathbb{CP}}_n$ vanishes upon
restriction to every model embedding
${\mathbb{RP}}_n\hookrightarrow{\mathbb{CP}}_n$ if and only if it is of the
form $\theta J_{ab}$, where $\theta$ is a some smooth function and $J_{ab}$ is 
the K\"ahler form.
\end{corollary}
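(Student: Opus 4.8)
The plan is to view a two-form as exactly the irreducible tensor associated to the fundamental representation $\Lambda^2{\mathbb{R}}^{2n}$ of ${\mathrm{SL}}(2n,{\mathbb{R}})$, whose highest weight carries the coefficient $1$ over the second node and zeros elsewhere. For $n\geq 2$ this is visibly of the form~(\ref{nottootall}), so Theorem~\ref{minortheorem} applies. The reverse implication is the easy half: by Proposition~\ref{Two} each model embedding is Lagrangian, and the K\"ahler form $J_{ab}$ vanishes on Lagrangian subspaces, so any multiple $\theta J_{ab}$ restricts to zero on every model embedding.

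For the forward implication I would first specialise the symplectic decomposition~(\ref{totallytracefreepart}) to this case. Under ${\mathrm{Sp}}(2n,{\mathbb{R}})$ the space $\Lambda^2{\mathbb{R}}^{2n}$ breaks into precisely two irreducible summands---the primitive (totally trace-free) two-forms and the one-dimensional span of $J_{ab}$---so that
\[\omega_{ab}=\omega_{ab}^\perp+\theta J_{ab},\]
where $\omega_{ab}^\perp$ is the totally trace-free part and the scalar $\theta$ is, up to a universal nonzero constant, the single $J$-trace $J^{cd}\omega_{cd}$. Now if $\omega_{ab}$ vanishes on every model embedding, Theorem~\ref{minortheorem} forces $\omega_{ab}^\perp=0$, leaving $\omega_{ab}=\theta J_{ab}$. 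Since $\omega_{ab}$ and $J_{ab}$ are smooth and $\theta$ is a fixed contraction of $\omega_{ab}$ against the smooth inverse form $J^{cd}$, the function $\theta$ is itself smooth.

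I do not expect any genuine obstacle: the corollary is simply Theorem~\ref{minortheorem} read off for skew tensors of valence two. The only point that must be verified by hand is that the branching of $\Lambda^2$ from ${\mathrm{SL}}(2n,{\mathbb{R}})$ to ${\mathrm{Sp}}(2n,{\mathbb{R}})$ contributes exactly one trace term, and that this term is a multiple of $J_{ab}$ itself---the symplectic counterpart of the familiar fact that a two-form on a $2n$-dimensional inner product space carries a single scalar trace.
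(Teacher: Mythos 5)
Your proof is correct and is essentially the paper's own argument: identify two-forms with the second fundamental representation of ${\mathrm{SL}}(2n,{\mathbb{R}})$, which is of the form~(\ref{nottootall}) for $n\geq 2$, apply Theorem~\ref{minortheorem}, and observe that the only trace term in the symplectic branching of $\Lambda^2$ is the multiple of $J_{ab}$. The sole detail you omit is the degenerate case $n=1$, which the paper dispatches by noting that hypothesis and conclusion are both trivially satisfied there.
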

\begin{proof} If $n=1$ then the hypothesis and conclusion are always trivially
satisfied. For $n\geq 2$ the representation of
${\mathrm{SL}}(2n,{\mathbb{R}})$ corresponding to two-forms is 
\[\begin{picture}(75,10)
\put(0,0){\line(1,0){35}}
\put(45,0){\makebox(0,0){$\cdots$}}
\put(55,0){\line(1,0){20}}
\put(0,0){\makebox(0,0){$\bullet$}}
\put(0,7){\makebox(0,0){\scriptsize$0$}}
\put(15,0){\makebox(0,0){$\bullet$}}
\put(15,7){\makebox(0,0){\scriptsize$1$}}
\put(30,0){\makebox(0,0){$\bullet$}}
\put(30,7){\makebox(0,0){\scriptsize$0$}}
\put(60,0){\makebox(0,0){$\bullet$}}
\put(60,7){\makebox(0,0){\scriptsize$0$}}
\put(75,0){\makebox(0,0){$\bullet$}}
\put(75,7){\makebox(0,0){\scriptsize$0$}}
\end{picture}\quad(\geq 3\mbox{ nodes})\]
and Theorem~\ref{minortheorem} applies.\end{proof}
For the purposes of this article, we shall need Theorem~\ref{minortheorem} for
tensors having symmetries of the form 
\begin{equation}\label{generalisedRiemann}\begin{picture}(75,10)
\put(0,0){\line(1,0){35}}
\put(45,0){\makebox(0,0){$\cdots$}}
\put(55,0){\line(1,0){20}}
\put(0,0){\makebox(0,0){$\bullet$}}
\put(0,7){\makebox(0,0){\scriptsize$0$}}
\put(15,0){\makebox(0,0){$\bullet$}}
\put(15,7){\makebox(0,0){\scriptsize$\ell$}}
\put(30,0){\makebox(0,0){$\bullet$}}
\put(30,7){\makebox(0,0){\scriptsize$0$}}
\put(60,0){\makebox(0,0){$\bullet$}}
\put(60,7){\makebox(0,0){\scriptsize$0$}}
\put(75,0){\makebox(0,0){$\bullet$}}
\put(75,7){\makebox(0,0){\scriptsize$0$}}
\end{picture}\quad(\geq 3\mbox{ nodes})\end{equation}
for $\ell\geq 1$ and it is worthwhile stating what this means more explicitly
in this case. Writing square brackets to denote skew-symmetrisation over the 
indices they enclose, the tensors themselves may realised in the form
\begin{equation}\label{skewpairs}R_{paqb\cdots rc}=R_{[pa][qb]\cdots [rc]},
\end{equation}
with $\ell$ pairs of skew indices, symmetric in these pairs, and such that
\begin{equation}\label{bianchi}R_{[paq]b\cdots rc}=0,\end{equation}
generalising the symmetries of a Riemann tensor, which is the case~$\ell=2$.
Such tensors enjoy a decomposition 
\[R_{paqb\cdots rc}=X_{paqb\cdots rc}+J\mbox{-trace terms}\] 
generalising (\ref{symplecticRiemann}), where $X_{paqb\cdots rc}$ is totally 
trace-free and the $J$-trace terms follow the branching 
\[\begin{picture}(90,10)
\put(0,0){\line(1,0){35}}
\put(45,0){\makebox(0,0){$\cdots$}}
\put(55,0){\line(1,0){35}}
\put(0,0){\makebox(0,0){$\bullet$}}
\put(0,7){\makebox(0,0){\scriptsize$0$}}
\put(15,0){\makebox(0,0){$\bullet$}}
\put(15,7){\makebox(0,0){\scriptsize$\ell$}}
\put(30,0){\makebox(0,0){$\bullet$}}
\put(30,7){\makebox(0,0){\scriptsize$0$}}
\put(60,0){\makebox(0,0){$\bullet$}}
\put(60,7){\makebox(0,0){\scriptsize$0$}}
\put(75,0){\makebox(0,0){$\bullet$}}
\put(75,7){\makebox(0,0){\scriptsize$0$}}
\put(90,0){\makebox(0,0){$\bullet$}}
\put(90,7){\makebox(0,0){\scriptsize$0$}}
\end{picture}\enskip\raisebox{-2pt}{$=$}\enskip\begin{picture}(75,10)
\put(0,0){\line(1,0){35}}
\put(45,0){\makebox(0,0){$\cdots$}}
\put(55,0){\line(1,0){5}}
\put(60,1){\line(1,0){15}}
\put(60,-1){\line(1,0){15}}
\put(0,0){\makebox(0,0){$\bullet$}}
\put(0,7){\makebox(0,0){\scriptsize$0$}}
\put(15,0){\makebox(0,0){$\bullet$}}
\put(15,7){\makebox(0,0){\scriptsize$\ell$}}
\put(30,0){\makebox(0,0){$\bullet$}}
\put(30,7){\makebox(0,0){\scriptsize$0$}}
\put(60,0){\makebox(0,0){$\bullet$}}
\put(60,7){\makebox(0,0){\scriptsize$0$}}
\put(75,0){\makebox(0,0){$\bullet$}}
\put(75,7){\makebox(0,0){\scriptsize$0$}}
\put(67,0){\makebox(0,0){$\langle$}}
\end{picture}\enskip
\raisebox{-2pt}{$\oplus\bigoplus_{j=\ell-1}^{j=0}$}\;
\begin{picture}(75,10)
\put(0,0){\line(1,0){35}}
\put(45,0){\makebox(0,0){$\cdots$}}
\put(55,0){\line(1,0){5}}
\put(60,1){\line(1,0){15}}
\put(60,-1){\line(1,0){15}}
\put(0,0){\makebox(0,0){$\bullet$}}
\put(0,7){\makebox(0,0){\scriptsize$0$}}
\put(15,0){\makebox(0,0){$\bullet$}}
\put(15,7){\makebox(0,0){\scriptsize$j$}}
\put(30,0){\makebox(0,0){$\bullet$}}
\put(30,7){\makebox(0,0){\scriptsize$0$}}
\put(60,0){\makebox(0,0){$\bullet$}}
\put(60,7){\makebox(0,0){\scriptsize$0$}}
\put(75,0){\makebox(0,0){$\bullet$}}
\put(75,7){\makebox(0,0){\scriptsize$0$}}
\put(67,0){\makebox(0,0){$\langle$}}
\end{picture}\;\raisebox{-2pt}{.}\]
For later use, we record the result that we shall require. 
\begin{corollary}\label{linearalgebra}
For $n\geq 2$, a smooth tensor on ${\mathbb{CP}}_n$ of the form
{\rm(\ref{generalisedRiemann})} vanishes upon restriction to every model
embedding ${\mathbb{RP}}_n\hookrightarrow{\mathbb{CP}}_n$ if and only if its
trace-free part with respect to the K\"ahler form vanishes.
\end{corollary}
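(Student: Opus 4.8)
The plan is to deduce the corollary straight from Theorem~\ref{minortheorem}; essentially all that needs doing is to recognise that the tensors described concretely by~(\ref{skewpairs})--(\ref{bianchi}) are exactly the irreducible $\mathrm{SL}(2n,\mathbb{R})$-tensors attached to the Dynkin diagram~(\ref{generalisedRiemann}), and that this diagram belongs to the admissible family~(\ref{nottootall}). Granting these two identifications, both directions of the ``if and only if'' fall out at once: vanishing on all model embeddings forces the totally trace-free part to vanish by Theorem~\ref{minortheorem}, while if the trace-free part vanishes then $R_{paqb\cdots rc}$ reduces to its $J$-trace terms, each of which carries a factor of the K\"ahler form $J_{ab}$ and so restricts to zero on any Lagrangian subspace, these being precisely the tangent spaces to model embeddings by Proposition~\ref{Two}.

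First I would pin down the representation. A tensor skew in each of its $\ell$ index pairs, symmetric under interchange of the pairs, and annihilated by the skew-symmetrisation~(\ref{bianchi}) is the natural $\ell$-fold generalisation of the Riemann tensor (recovered when $\ell=2$), and these symmetries cut out the irreducible $\mathrm{GL}(2n,\mathbb{R})$-module of Young type $(\ell,\ell)$, two rows of length~$\ell$. A single definite Young symmetry type always singles out an irreducible module, and here the diagram has only two rows, well within the bound~$2n$, so irreducibility is automatic and the highest weight is $\ell$ times the second fundamental weight. That is exactly the labelling in~(\ref{generalisedRiemann}). Next I would check that this diagram has the shape~(\ref{nottootall}): for $n\geq2$ the Dynkin diagram of $\mathrm{SL}(2n,\mathbb{R})$ has $2n-1\geq3$ nodes, and~(\ref{generalisedRiemann}) carries the single nonzero label $\ell$ over the second node with zeros elsewhere, so in particular every label over the right-hand half of the diagram vanishes and~(\ref{nottootall}) holds with $a_2=\ell$ and all remaining $a_i=0$. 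Theorem~\ref{minortheorem} therefore applies, and its conclusion $\psi^\perp=0$ is precisely the vanishing of the leading piece $X_{paqb\cdots rc}$, the part trace-free with respect to~$J^{ab}$.

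The only genuine content, and hence the step I expect to require the most care, is the representation-theoretic bookkeeping: confirming that~(\ref{skewpairs})--(\ref{bianchi}) really do cut out a single irreducible constituent of the stated highest weight, rather than something reducible or of a different type, and verifying the node count so that the diagram genuinely sits inside the family~(\ref{nottootall}) to which Theorem~\ref{minortheorem} applies. Once that identification is secured the corollary is immediate, exactly as in the two-form case treated above.
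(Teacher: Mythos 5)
Your proposal is correct and follows the paper's own route: the paper's proof is the one-line ``Immediate from Theorem~\ref{minortheorem} and our discussion above,'' where the preceding discussion makes exactly the identification you spell out, namely that the symmetries (\ref{skewpairs})--(\ref{bianchi}) cut out the irreducible module (\ref{generalisedRiemann}), which lies in the family (\ref{nottootall}). Your extra care over the two-row Young symmetry, the node count for $n\geq 2$, and the easy converse via $J$ vanishing on Lagrangian subspaces is all consistent with what the paper leaves implicit.
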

\begin{proof} Immediate from Theorem~\ref{minortheorem} and our discussion 
above.\end{proof}
\section{Integrability conditions on ${\mathbb{CP}}_n$}\label{necessary}
In this section, we explore some necessary local conditions in order that a
smooth symmetric tensor $\omega_{ab\cdots c}$ on ${\mathbb{CP}}_n$ be of the
form~$\nabla_{(a}\phi_{b\cdots c)}$ for some symmetric~$\phi_{b\cdots c}$,
where $\nabla_a$ is the Fubini-Study connection. To do this, we shall need to
know the curvature of the Fubini-Study metric and also of the round metric
on~${\mathbb{RP}}_n$, as induced by a model embedding
${\mathbb{RP}}_n\hookrightarrow{\mathbb{CP}}_n$. With suitable normalisations,
the following formul\ae\ 
\begin{equation}\label{curvatures}\begin{array}{rcl}
\mbox{${\mathbb{RP}}_n$:}\quad R_{abcd}&\!=\!&
g_{ac}g_{bd}-g_{bc}g_{ad}\\
\mbox{${\mathbb{CP}}_n$:}\quad R_{abcd}&\!=\!&
g_{ac}g_{bd}-g_{bc}g_{ad}
+J_{ac}J_{bd}-J_{bc}J_{ad}+2J_{ab}J_{cd}
\end{array}\end{equation}
are well-known. Whilst the metric tensors $g_{ab}$ have different meanings on
the $2n$-dimensional Riemannian manifold ${\mathbb{CP}}_n$ and the
$n$-dimensional Riemannian manifold~${\mathbb{RP}}_n$, they coincide under
restriction to a model embedding
${\mathbb{RP}}_n\hookrightarrow{\mathbb{CP}}_n$ and, in this sense, our abuse
of notation is a legitimate convenience, which should cause no confusion.

The corresponding exploration on ${\mathbb{RP}}_n$ has already been done. To 
state its conclusions the following notation is useful. 
Suppose $T_{pq\cdots rab\cdots c}$ is a tensor that is symmetric in two 
groups of $\ell$ indices:--
\[T_{pq\cdots rab\cdots c}=T_{(pq\cdots r)(ab\cdots c)}.\]
We may define a new tensor by re-ordering its indices 
\[S_{paqb\cdots rc}=T_{pq\cdots rab\cdots c}\]
and then manufacturing yet another tensor by
\[R_{paqb\cdots rc}=S_{[pa][qb]\cdots [rc]}.\]
Let us write $\pi$ for this homomorphism of tensors
\[T_{pq\cdots rab\cdots c}\stackrel{\pi}{\longmapsto}R_{paqb\cdots rc}.\] 
Again, the notation applies equally well to tensors on ${\mathbb{RP}}_n$ as it 
does on~${\mathbb{CP}}_n$. Furthermore, it is evident that $\pi$ commutes with 
pull-back to a model embedding 
${\mathbb{RP}}_n\hookrightarrow{\mathbb{CP}}_n$. On the level of 
${\mathrm{SL}}(m,{\mathbb{R}})$-modules, where $m=2n$ for tensors on 
${\mathbb{CP}}_n$ and $m=n$ for tensors on~${\mathbb{RP}}_n$, 
the homomorphism $\pi$ is induced by projection onto the last factor of
\[\begin{picture}(60,10)
\put(0,0){\line(1,0){35}}
\put(45,0){\makebox(0,0){$\cdots$}}
\put(55,0){\line(1,0){5}}
\put(0,0){\makebox(0,0){$\bullet$}}
\put(0,7){\makebox(0,0){\scriptsize$\ell$}}
\put(15,0){\makebox(0,0){$\bullet$}}
\put(15,7){\makebox(0,0){\scriptsize$0$}}
\put(30,0){\makebox(0,0){$\bullet$}}
\put(30,7){\makebox(0,0){\scriptsize$0$}}
\put(60,0){\makebox(0,0){$\bullet$}}
\put(60,7){\makebox(0,0){\scriptsize$0$}}
\end{picture}\enskip\raisebox{-2pt}{$\otimes$}\enskip
\begin{picture}(60,10)
\put(0,0){\line(1,0){35}}
\put(45,0){\makebox(0,0){$\cdots$}}
\put(55,0){\line(1,0){5}}
\put(0,0){\makebox(0,0){$\bullet$}}
\put(0,7){\makebox(0,0){\scriptsize$\ell$}}
\put(15,0){\makebox(0,0){$\bullet$}}
\put(15,7){\makebox(0,0){\scriptsize$0$}}
\put(30,0){\makebox(0,0){$\bullet$}}
\put(30,7){\makebox(0,0){\scriptsize$0$}}
\put(60,0){\makebox(0,0){$\bullet$}}
\put(60,7){\makebox(0,0){\scriptsize$0$}}
\end{picture}\enskip\raisebox{-2pt}{$=\ldots\oplus$}\enskip
\begin{picture}(60,10)
\put(0,0){\line(1,0){35}}
\put(45,0){\makebox(0,0){$\cdots$}}
\put(55,0){\line(1,0){5}}
\put(0,0){\makebox(0,0){$\bullet$}}
\put(0,7){\makebox(0,0){\scriptsize$0$}}
\put(15,0){\makebox(0,0){$\bullet$}}
\put(15,7){\makebox(0,0){\scriptsize$\ell$}}
\put(30,0){\makebox(0,0){$\bullet$}}
\put(30,7){\makebox(0,0){\scriptsize$0$}}
\put(60,0){\makebox(0,0){$\bullet$}}
\put(60,7){\makebox(0,0){\scriptsize$0$}}
\end{picture}\quad(m-1\mbox{ nodes}),\]
the last module being trivial when $m=2$. On any manifold $M$ (but, in 
particular, on ${\mathbb{RP}}_n$ and~${\mathbb{CP}}_n$) let us introduce the
notation $Y^\ell$ for the bundle of $2\ell$-tensors $R_{paqb\cdots rc}$ 
satisfying the symmetries~(\ref{skewpairs}), symmetric in the pairs $pa$, 
$qb$, \dots, $rc$, and such that (\ref{bianchi}) holds. It is the 
target for the homomorphism
\[\textstyle\pi:\bigodot^\ell\!\Lambda^1\otimes\bigodot^\ell\!\Lambda^1
\longrightarrow Y^\ell,\]
which reduces to the exterior product 
$\wedge:\Lambda^1\otimes\Lambda^1\to\Lambda^2$ when $\ell=1$.

As will be explained in the proof, the following theorem may be derived from a 
special case of the {\em Bernstein-Gelfand-Gelfand resolution}. The cases
$\ell=2$ and $\ell=3$ were established by Calabi~\cite{c} (cf.~\cite{gg1}) 
and Estezet~\cite{e}, respectively.
\begin{theorem}\label{BGGonRP}
Fix $n\geq 2$. Suppose $\omega_{abc\cdots d}$ is a smooth
symmetric $\ell$-tensor on ${\mathbb{RP}}_n$. Then we can find a smooth 
symmetric tensor $\phi_{bc\cdots d}$ such that 
$\nabla_{(a}\phi_{bc\cdots d)}=\omega_{abc\cdots d}$ if and only if  
\begin{equation}\label{RPcompatibility}
\pi\left(\begin{array}l
\nabla_{(p}\nabla_q\nabla_r\cdots\nabla_{s)}\omega_{abc\cdots d}\\
\quad{}+{\textstyle\frac{(\ell-1)\ell(\ell+1)}{6}}
g_{(pq}\nabla_r\cdots\nabla_{s)}\omega_{abc\cdots d}\\
\qquad{}+\mbox{\rm lower order terms}\end{array}\right)=0.
\end{equation}
More explicitly, the operator in {\rm(\ref{RPcompatibility})} with its lower
order terms  may be determined as follows. If $\ell$ is even, 
then {\rm(\ref{RPcompatibility})} is
\begin{equation}\label{even}
\pi\big((\nabla^2+(\ell-1)^2 g)(\nabla^2+(\ell-3)^2 g)\cdots
(\nabla^2+9 g)(\nabla^2+ g)\big)\end{equation}
where
\[\textstyle\bigodot^{p-1}\!\Lambda^1\otimes\bigodot^\ell\!\Lambda^1
\xrightarrow{\nabla^2+p^2 g}
\bigodot^{p+1}\!\Lambda^1\otimes\bigodot^\ell\!\Lambda^1\]
is given by
\[\omega_{v \cdots wabc\cdots d}\mapsto
\nabla_{(t}\nabla_u\omega_{v\cdots w)abc\cdots d}+
p^2 g_{(tu}\omega_{v\cdots w)abc\cdots d}.\]
If $\ell$ is odd, then {\rm(\ref{RPcompatibility})} is
\[\pi\big((\nabla^2+(\ell-1)^2 g)(\nabla^2+(\ell-3)^2 g)\cdots
(\nabla^2+16 g)(\nabla^2+4 g)\nabla\big).\]
\end{theorem}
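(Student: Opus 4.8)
The plan is to use that the round metric on $\mathbb{RP}_n$ has constant curvature, as recorded in~(\ref{curvatures}), and is therefore projectively flat. Consequently $\mathbb{RP}_n$ is the flat model $\mathrm{SL}(n+1,\mathbb{R})/P$ of projective differential geometry, so the invariant calculus of that geometry is available. After using the metric to trivialise the projective density bundles, the symmetrised gradient $\phi_{bc\cdots d}\mapsto\nabla_{(a}\phi_{bc\cdots d)}$ is the first operator $D_0$ in a Bernstein--Gelfand--Gelfand sequence
\[\Gamma(E_0)\xrightarrow{\,D_0\,}\Gamma(E_1)\xrightarrow{\,D_1\,}\Gamma(E_2)\to\cdots,\]
with $E_0=\bigodot^{\ell-1}\Lambda^1$, $E_1=\bigodot^\ell\Lambda^1$, $E_2=Y^\ell$, attached to the finite-dimensional $\mathrm{SL}(n+1,\mathbb{R})$-module $\mathbb{V}=\ker D_0$ realised as the space of Killing $(\ell-1)$-tensors. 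I would then take the compatibility condition~(\ref{RPcompatibility}) to be exactly the equation $D_1\omega=0$.

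One direction is formal: $D_1D_0=0$ is part of the BGG complex (and may also be checked directly, since $D_1$ is manufactured to annihilate symmetrised gradients), so every $\omega=\nabla_{(a}\phi_{bc\cdots d)}$ satisfies~(\ref{RPcompatibility}). The content of the theorem is the converse, namely exactness of the sequence at $\Gamma(E_1)$. Because the projective structure is flat, the BGG sequence is a fine resolution of the local system $\mathbb{V}$ and hence locally exact; global exactness at the first interior spot is measured by $H^1(\mathbb{RP}_n;\mathbb{V})$. I would show this group vanishes for $n\geq2$ by passing to the simply connected double cover $S^n$, where the flat bundle is trivial and $H^1(S^n;\mathbb{V})=H^1(S^n;\mathbb{R})\otimes\mathbb{V}=0$, and then descending by the transfer for the free $\mathbb{Z}/2$-action. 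This is precisely the step that fails for $n=1$ and so accounts for the hypothesis $n\geq2$; it recovers $H^1_{\mathrm{dR}}(\mathbb{RP}_n)=0$ in the case $\ell=1$. It follows that $D_1\omega=0$ forces $\omega=\nabla_{(a}\phi_{bc\cdots d)}$ for a globally defined smooth symmetric $\phi_{bc\cdots d}$.

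It remains to identify $D_1$ explicitly and to recognise it as the factored operator~(\ref{even}) together with its odd counterpart. The general theory singles out $D_1$ as the unique (up to scale) invariant operator $E_1\to E_2$ whose leading part is obtained by taking $\ell$ symmetrised covariant derivatives and applying $\pi$. On a constant-curvature background I expect this operator to factor through the second-order maps $\nabla^2+p^2g$ of the statement, because each time one commutes a pair of covariant derivatives the curvature contributes the term $g_{ac}g_{bd}-g_{bc}g_{ad}$ of~(\ref{curvatures}), and the iterated contributions organise into the shifts $p^2$. The even/odd dichotomy and the consecutive odd (respectively even) squares $1,9,25,\dots$ (respectively $4,16,36,\dots$) reflect the eigenvalues by which the relevant Casimir acts along the tower $\bigodot^{p}\Lambda^1\otimes\bigodot^\ell\Lambda^1$, the extra first-order factor $\nabla$ in the odd case serving to correct the parity.

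I expect this last identification to be the main obstacle: matching principal symbols is immediate, but verifying that all lower-order terms assemble into the clean product requires genuine work. The cleanest route is to check that the product operator in~(\ref{even}) is itself invariant and annihilates $\im D_0$, so that uniqueness of BGG operators identifies it with $D_1$ up to normalisation. As a consistency check, expanding the product shows that the coefficient of the first correction term is $\sum p^2$, and in both the even and odd cases this sum equals $\tfrac{(\ell-1)\ell(\ell+1)}{6}$, matching~(\ref{RPcompatibility}). A direct alternative---an induction on $\ell$ commuting derivatives via the constant-curvature Ricci identity---is routine but heavy on bookkeeping, and I would use it only to fix constants should the invariance argument leave an ambiguity.
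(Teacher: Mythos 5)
Your proposal is correct and follows essentially the same route as the paper: both deduce the theorem from the (projectively flat) Bernstein--Gelfand--Gelfand resolution on ${\mathbb{RP}}_n={\mathrm{SL}}(n+1,{\mathbb{R}})/P$, obtaining one direction from $D_1D_0=0$, the converse from local exactness of the resolution together with the vanishing of $H^1$ with coefficients in the relevant local system, and the explicit factored form of the compatibility operator from Gover's uniqueness/factorisation argument for invariant operators on an Einstein background. Your transfer argument on the double cover $S^n$ makes explicit the $H^1$ vanishing that the paper leaves implicit (beyond the $\ell=1$ case), while the paper in turn fleshes out the prolongation mechanics via the flat tractor connection on $\Lambda^2{\mathbb{T}}$ and its generalisations; these are differences of emphasis, not of method.
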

\begin{proof} Let us write $\nabla$ to stand for the operator 
$\phi_{bc\cdots d}\mapsto\nabla_{(a}\phi_{bc\cdots d)}$ and 
$\nabla^{(\ell)}$ for the differential operator 
in~(\ref{RPcompatibility}). We claim that the sequence
\begin{equation}\label{thesecondrow}
\textstyle\bigodot^{\ell-1}\!\Lambda^1\xrightarrow{\,\nabla\,}
\bigodot^{\ell}\!\Lambda^1\xrightarrow{\,\nabla^{(\ell)}\,}Y^\ell
\end{equation}
on the level of sheaves is part of a fine resolution of a certain locally 
constant sheaf on ${\mathbb{RP}}_n$. When $\ell=1$, we have in mind the 
de~Rham resolution
\[0\to{\mathbb{R}}\to\Lambda^0\xrightarrow{\nabla}\Lambda^1
\xrightarrow{\nabla}\Lambda^2\xrightarrow{\nabla}\Lambda^3
\xrightarrow{\nabla}\cdots\xrightarrow{\nabla}\Lambda^{n-2}
\xrightarrow{\nabla}\Lambda^{n-1}
\xrightarrow{\nabla}\Lambda^n\to 0\]
and Theorem~\ref{BGGonRP} follows because $H^1({\mathbb{RP}}_n,{\mathbb{R}})=0$
for $n\geq 2$. For $\ell\geq 2$, the BGG (Bernstein-Gelfand-Gelfand)
resolution~\cite{cd,cssiv} replaces de~Rham. The key point is that the round
metric on ${\mathbb{RP}}_n$ is projectively flat. As detailed
in~\cite{IMAnotes,eg}, the BGG resolution is
\[\begin{array}{l}\raisebox{-3pt}{$0\to$}\;
\framebox{\;\begin{picture}(100,10)
\put(0,0){\line(1,0){60}}
\put(70,0){\makebox(0,0){$\cdots$}}
\put(80,0){\line(1,0){20}}
\put(0,0){\makebox(0,0){$\bullet$}}
\put(0,7){\makebox(0,0){\scriptsize$0$}}
\put(20,0){\makebox(0,0){$\bullet$}}
\put(20,7){\makebox(0,0){\scriptsize$\ell-1$}}
\put(40,0){\makebox(0,0){$\bullet$}}
\put(40,7){\makebox(0,0){\scriptsize$0$}}
\put(55,0){\makebox(0,0){$\bullet$}}
\put(55,7){\makebox(0,0){\scriptsize$0$}}
\put(85,0){\makebox(0,0){$\bullet$}}
\put(85,7){\makebox(0,0){\scriptsize$0$}}
\put(100,0){\makebox(0,0){$\bullet$}}
\put(100,7){\makebox(0,0){\scriptsize$0$}}
\end{picture}\;}\\[10pt]
\quad\raisebox{-3pt}{$\to$}\enskip
\begin{picture}(80,10)
\put(0,0){\line(1,0){40}}
\put(50,0){\makebox(0,0){$\cdots$}}
\put(60,0){\line(1,0){20}}
\put(0,0){\makebox(0,0){$\bullet$}}
\put(0,7){\makebox(0,0){\scriptsize$\ell-1$}}
\put(20,0){\makebox(0,0){$\bullet$}}
\put(20,7){\makebox(0,0){\scriptsize$0$}}
\put(35,0){\makebox(0,0){$\bullet$}}
\put(35,7){\makebox(0,0){\scriptsize$0$}}
\put(65,0){\makebox(0,0){$\bullet$}}
\put(65,7){\makebox(0,0){\scriptsize$0$}}
\put(80,0){\makebox(0,0){$\bullet$}}
\put(80,7){\makebox(0,0){\scriptsize$0$}}
\end{picture}
\enskip\raisebox{-3pt}{$\xrightarrow{\,\nabla\,}$}\enskip
\begin{picture}(75,10)
\put(0,0){\line(1,0){35}}
\put(45,0){\makebox(0,0){$\cdots$}}
\put(55,0){\line(1,0){20}}
\put(0,0){\makebox(0,0){$\bullet$}}
\put(0,7){\makebox(0,0){\scriptsize$\ell$}}
\put(15,0){\makebox(0,0){$\bullet$}}
\put(15,7){\makebox(0,0){\scriptsize$0$}}
\put(30,0){\makebox(0,0){$\bullet$}}
\put(30,7){\makebox(0,0){\scriptsize$0$}}
\put(60,0){\makebox(0,0){$\bullet$}}
\put(60,7){\makebox(0,0){\scriptsize$0$}}
\put(75,0){\makebox(0,0){$\bullet$}}
\put(75,7){\makebox(0,0){\scriptsize$0$}}
\end{picture}
\enskip\raisebox{-3pt}{$\xrightarrow{\,\nabla^{(\ell)}\,}$}\enskip
\begin{picture}(75,10)
\put(0,0){\line(1,0){35}}
\put(45,0){\makebox(0,0){$\cdots$}}
\put(55,0){\line(1,0){20}}
\put(0,0){\makebox(0,0){$\bullet$}}
\put(0,7){\makebox(0,0){\scriptsize$0$}}
\put(15,0){\makebox(0,0){$\bullet$}}
\put(15,7){\makebox(0,0){\scriptsize$\ell$}}
\put(30,0){\makebox(0,0){$\bullet$}}
\put(30,7){\makebox(0,0){\scriptsize$0$}}
\put(60,0){\makebox(0,0){$\bullet$}}
\put(60,7){\makebox(0,0){\scriptsize$0$}}
\put(75,0){\makebox(0,0){$\bullet$}}
\put(75,7){\makebox(0,0){\scriptsize$0$}}
\end{picture}\\[10pt]
\qquad\raisebox{-3pt}{$\xrightarrow{\,\nabla\,}$}\enskip
\begin{picture}(85,10)
\put(0,0){\line(1,0){45}}
\put(55,0){\makebox(0,0){$\cdots$}}
\put(65,0){\line(1,0){20}}
\put(0,0){\makebox(0,0){$\bullet$}}
\put(0,7){\makebox(0,0){\scriptsize$0$}}
\put(20,0){\makebox(0,0){$\bullet$}}
\put(20,7){\makebox(0,0){\scriptsize$\ell-1$}}
\put(40,0){\makebox(0,0){$\bullet$}}
\put(40,7){\makebox(0,0){\scriptsize$1$}}
\put(70,0){\makebox(0,0){$\bullet$}}
\put(70,7){\makebox(0,0){\scriptsize$0$}}
\put(85,0){\makebox(0,0){$\bullet$}}
\put(85,7){\makebox(0,0){\scriptsize$0$}}
\end{picture}
\enskip\raisebox{-3pt}{$\xrightarrow{\,\nabla\,}\cdots
\xrightarrow{\,\nabla\,}$}\enskip
\begin{picture}(85,10)
\put(0,0){\line(1,0){45}}
\put(55,0){\makebox(0,0){$\cdots$}}
\put(65,0){\line(1,0){20}}
\put(0,0){\makebox(0,0){$\bullet$}}
\put(0,7){\makebox(0,0){\scriptsize$0$}}
\put(20,0){\makebox(0,0){$\bullet$}}
\put(20,7){\makebox(0,0){\scriptsize$\ell-1$}}
\put(40,0){\makebox(0,0){$\bullet$}}
\put(40,7){\makebox(0,0){\scriptsize$0$}}
\put(70,0){\makebox(0,0){$\bullet$}}
\put(70,7){\makebox(0,0){\scriptsize$1$}}
\put(85,0){\makebox(0,0){$\bullet$}}
\put(85,7){\makebox(0,0){\scriptsize$0$}}
\end{picture}\\[10pt]
\quad\qquad\raisebox{-3pt}{$\xrightarrow{\,\nabla\,}$}\enskip
\begin{picture}(85,10)
\put(0,0){\line(1,0){45}}
\put(55,0){\makebox(0,0){$\cdots$}}
\put(65,0){\line(1,0){20}}
\put(0,0){\makebox(0,0){$\bullet$}}
\put(0,7){\makebox(0,0){\scriptsize$0$}}
\put(20,0){\makebox(0,0){$\bullet$}}
\put(20,7){\makebox(0,0){\scriptsize$\ell-1$}}
\put(40,0){\makebox(0,0){$\bullet$}}
\put(40,7){\makebox(0,0){\scriptsize$0$}}
\put(70,0){\makebox(0,0){$\bullet$}}
\put(70,7){\makebox(0,0){\scriptsize$0$}}
\put(85,0){\makebox(0,0){$\bullet$}}
\put(85,7){\makebox(0,0){\scriptsize$1$}}
\end{picture}
\enskip\raisebox{-3pt}{$\xrightarrow{\,\nabla\,}$}\enskip
\begin{picture}(85,10)
\put(0,0){\line(1,0){45}}
\put(55,0){\makebox(0,0){$\cdots$}}
\put(65,0){\line(1,0){20}}
\put(0,0){\makebox(0,0){$\bullet$}}
\put(0,7){\makebox(0,0){\scriptsize$0$}}
\put(20,0){\makebox(0,0){$\bullet$}}
\put(20,7){\makebox(0,0){\scriptsize$\ell-1$}}
\put(40,0){\makebox(0,0){$\bullet$}}
\put(40,7){\makebox(0,0){\scriptsize$0$}}
\put(70,0){\makebox(0,0){$\bullet$}}
\put(70,7){\makebox(0,0){\scriptsize$0$}}
\put(85,0){\makebox(0,0){$\bullet$}}
\put(85,7){\makebox(0,0){\scriptsize$0$}}
\end{picture}
\enskip\raisebox{-2pt}{.}\end{array}\]
Here, \raisebox{3pt}{\framebox{\rule{0pt}{3pt}\quad}} denotes the locally
constant sheaf on ${\mathbb{RP}}_n={\mathrm{SL}}(n+1,{\mathbb{R}})/P$ induced
by the given representation of ${\mathrm{SL}}(n+1,{\mathbb{R}})$ restricted to
$P$ and then induced back up to a homogeneous bundle on~${\mathbb{RP}}_n$. 
After \raisebox{3pt}{\framebox{\rule{0pt}{3pt}\quad}}, we are 
writing irreducible representations of
${\mathrm{SL}}(n,{\mathbb{R}})$ instead of the corresponding induced bundles.
The second row coincides with~(\ref{thesecondrow}). Formul\ae\ for the
operators $\nabla^{(\ell)}$ are given inductively in~\cite{cds,cssiii} but the  
products given in the statement of Theorem~\ref{BGGonRP} are most easily 
deduced from Gover's method~\cite{gover} (and these factorisations hold for 
any Einstein metric). In expanded form
\[\pi\Big(\textstyle\nabla^{\ell}\omega
+\frac{(\ell-1)\ell(\ell+1)}{6}g\nabla^{\ell-2}\omega
+\frac{(\ell-3)(\ell-2)(\ell-1)\ell(\ell+1)(5\ell+7)}{360}
g^2\nabla^{\ell-4}\omega+\cdots
\Big)\]
the coefficients are quite complicated (although the array generated by 
(\ref{even}) appears as triangle A008956 in the On-Line Encyclopedia of 
Integer Sequences at \verb+www.research.att.com/~njas/sequences+). Fortunately, 
we shall not need the details of the operators $\nabla^{(\ell)}$ but only 
their general form and how they may be manufactured, which is as follows. Let 
${\mathbb{T}}$ denote the bundle $\Lambda^0\oplus\Lambda^1$ on 
${\mathbb{RP}}_n$ equipped with the connection
\begin{equation}\label{standardtractorconnection}
{\mathbb{T}}=\begin{array}c\Lambda^0\\ \oplus\\ \Lambda^1\end{array}\ni
\left[\begin{array}c\sigma\\ \mu_b\end{array}\right]
\stackrel{\nabla_a}{\longmapsto}
\left[\begin{array}c\nabla_a\sigma-\mu_a\\ 
\nabla_a\mu_b+g_{ab}\sigma\end{array}\right]\in\Lambda^1\otimes{\mathbb{T}}.
\end{equation}
We compute that
\[\nabla_a\nabla_b
\left[\begin{array}c\sigma\\ \mu_c\end{array}\right]=
\left[\begin{array}c\nabla_a\nabla_b\sigma-\nabla_a\mu_b
-\nabla_b\mu_a-g_{ab}\sigma\\ 
\nabla_a\nabla_b\mu_c+g_{bc}\nabla_a\sigma+g_{ac}\nabla_b\sigma
-g_{ac}\mu_b\end{array}\right]\]
and observe from (\ref{curvatures}) that
\[(\nabla_a\nabla_b-\nabla_b\nabla_a)\mu_c=R_{abc}{}^d\mu_d=
g_{ac}\mu_b-g_{bc}\mu_a\,,\]
hence that the connection on ${\mathbb{T}}$ is flat. It follows that the 
coupled de~Rham complex 
\[\Lambda^0\otimes{\mathbb{T}}\xrightarrow{\,\nabla\,}
\Lambda^1\otimes{\mathbb{T}}\xrightarrow{\,\nabla\,}
\Lambda^2\otimes{\mathbb{T}}\xrightarrow{\,\nabla\,}
\Lambda^3\otimes{\mathbb{T}}\xrightarrow{\,\nabla\,}
\cdots\xrightarrow{\,\nabla\,}\Lambda^n\otimes{\mathbb{T}}\to 0\]
is a fine resolution of the locally covariant constant sections 
of~${\mathbb{T}}$ with a similar conclusion for any associated vector bundle 
such as $\Lambda^2{\mathbb{T}}$. Let us examine the induced connection on 
$\Lambda^2{\mathbb{T}}$ in more detail:--
\begin{equation}\label{skewtractors}
\Lambda^2{\mathbb{T}}=\!\begin{array}c\Lambda^1\\ \oplus\\ \Lambda^2\end{array}
\!\ni\left[\begin{array}c\sigma_b\\ \mu_{bc}\end{array}\right]
\stackrel{\nabla_a}{\longmapsto}
\left[\begin{array}c\nabla_a\sigma_b-\mu_{ab}\\ 
\nabla_a\mu_{bc}+g_{ab}\sigma_c-g_{ac}\sigma_b\end{array}\right]
\in\Lambda^1\otimes\Lambda^2{\mathbb{T}}.\end{equation}
We shall show that exactness of the coupled de~Rham complex 
\[\Gamma({\mathbb{RP}}_n,\Lambda^2{\mathbb{T}})\xrightarrow{\,\nabla\,}
\Gamma({\mathbb{RP}}_n,\Lambda^1\otimes\Lambda^2{\mathbb{T}})
\xrightarrow{\,\nabla\,}
\Gamma({\mathbb{RP}}_n,\Lambda^2\otimes\Lambda^2{\mathbb{T}})\]
is sufficient to deduce the case $\ell=2$ of Theorem~\ref{BGGonRP}. 
Firstly, we need a formula for 
$\nabla_a:\Lambda^1\otimes\Lambda^2{\mathbb{T}}\to
\Lambda^2\otimes\Lambda^2{\mathbb{T}}$. It is
immediate from (\ref{skewtractors}) that
\begin{equation}\label{coupledskewtractors}
\left[\begin{array}c\xi_{bc}\\ \nu_{bcd}\end{array}\right]
\stackrel{\nabla_a}{\longmapsto}
\left[\begin{array}c\nabla_{[a}\xi_{b]c}+\nu_{[ab]c}\\ 
\nabla_{[a}\nu_{b]cd}+g_{c[a}\xi_{b]d}-g_{d[a}\xi_{b]c}\end{array}\right].
\end{equation}
Now suppose 
$\omega_{ab}$ is a symmetric tensor on~${\mathbb{RP}}_n$. We claim 
that the following are equivalent.
\begin{enumerate}\setlength{\itemsep}{3pt}
\item $\left[\begin{array}c\omega_{bc}\\ 
\nabla_c\omega_{db}-\nabla_d\omega_{cb}\end{array}\right]
\in\Gamma({\mathbb{RP}}_n,\Lambda^1\otimes\Lambda^2{\mathbb{T}})$
is in the range of the coupled connection 
$\nabla_b:\Gamma({\mathbb{RP}}_n,\Lambda^2{\mathbb{T}})\to
\Gamma({\mathbb{RP}}_n,\Lambda^1\otimes\Lambda^2{\mathbb{T}})$.
\item $\omega_{ab}=\nabla_{(a}\phi_{b)}$ for some 
$\phi_a\in\Gamma({\mathbb{RP}}_n,\Lambda^1)$.
\item $\left[\begin{array}c\omega_{bc}\\ 
\nu_{bcd}\end{array}\right]
\in\Gamma({\mathbb{RP}}_n,\Lambda^1\otimes\Lambda^2{\mathbb{T}})$
for some $\nu_{bcd}\in\Gamma({\mathbb{RP}}_n,\Lambda^1\otimes\Lambda^2)$ 
is in the range of 
$\nabla_b:\Gamma({\mathbb{RP}}_n,\Lambda^2{\mathbb{T}})\to
\Gamma({\mathbb{RP}}_n,\Lambda^1\otimes\Lambda^2{\mathbb{T}})$.
\end{enumerate}
It is clear from (\ref{skewtractors}) that
(i)$\Rightarrow$(ii)$\Rightarrow$(iii). It remains to show 
(iii)$\Rightarrow$(i). To see this, recall that the curvature of the 
connection on $\Lambda^2{\mathbb{T}}$ is flat and so if (iii) holds, then we 
must have
\[\left[\begin{array}c\omega_{bc}\\ \nu_{bcd}\end{array}\right]
\stackrel{\nabla_a}{\longmapsto}0\in
\Gamma({\mathbb{RP}}_n,\Lambda^2\otimes\Lambda^2{\mathbb{T}}).\]
In particular, we read off from the first row of (\ref{coupledskewtractors}) 
that 
\[\nabla_{[a}\omega_{b]c}+\nu_{[ab]c}=0.\]
{From} this, bearing in mind that $\nu_{abc}=\nu_{a[bc]}$, it follows that 
\[\nu_{abc}=3\nu_{[abc]}-2\nu_{[bc]a}=2\nabla_{[b}\omega_{c]a}=
\nabla_{b}\omega_{ca}-\nabla_{c}\omega_{ba},\]
as required. Finally, to deduce Theorem~\ref{BGGonRP} we suppose that (i) holds
and consider the second row of~(\ref{coupledskewtractors}):--
\[\left[\!\begin{array}c\omega_{bc}\\ 
\nabla_c\omega_{db}-\nabla_d\omega_{cb}\end{array}\!\right]
\!\stackrel{\nabla_a}{\longmapsto}\!
\left[\!\begin{array}c0\\ 
\nabla_{[a}\nabla_{|c|}\omega_{b]d}-\nabla_{[a}\nabla_{|d|}\omega_{b]c}
+g_{c[a}\omega_{b]d}-g_{d[a}\omega_{b]c}\end{array}\!\right]
\]
where, following~\cite[pp.~132--]{PR1}, the vertical bars in $\nabla_{|c|}$ and
$\nabla_{|d|}$ exclude the indices they enclose from skew-symmetrisation. 
Again, since the connection is flat, we conclude that the vanishing of 
\begin{equation}\label{rawobstruction}
\nabla_{[a}\nabla_{|c|}\omega_{b]d}-\nabla_{[a}\nabla_{|d|}\omega_{b]c}
+g_{c[a}\omega_{b]d}-g_{d[a}\omega_{b]c}\end{equation}
is a necessary and sufficient condition in order that 
$\omega_{ab}=\nabla_{(a}\phi_{b)}$ for some smooth $1$-form $\phi_b$ 
on~${\mathbb{RP}}_n$. However, it is easy to check that this coincides with  
\begin{equation}\label{refinedobstruction}
2\times\pi(\nabla_{(a}\nabla_{c)}\omega_{bd}+g_{ac}\omega_{bd})\end{equation}
whence Theorem~\ref{BGGonRP} is proved for the case $\ell=2$. The general case 
follows similarly by considering the induced flat connection on the associated
bundle (in terms of Young diagrams)
\begin{equation}\label{bigtractors}\underbrace{\begin{picture}(54,12)
\put(0,0){\line(1,0){54}}
\put(0,6){\line(1,0){54}}
\put(0,12){\line(1,0){54}}
\put(0,0){\line(0,1){12}}
\put(6,0){\line(0,1){12}}
\put(12,0){\line(0,1){12}}
\put(18,0){\line(0,1){12}}
\put(30,3){\makebox(0,0){$\cdots$}}
\put(30,9){\makebox(0,0){$\cdots$}}
\put(42,0){\line(0,1){12}}
\put(48,0){\line(0,1){12}}
\put(54,0){\line(0,1){12}}
\end{picture}}_{\mbox{\scriptsize$\ell-1$ columns}}\,
\mbox{\LARGE${\mathbb{T}}$}\end{equation}
or equivalently (since $\Lambda^n$ and hence 
$\Lambda^{n+1}{\mathbb{T}}=\Lambda^0\otimes\Lambda^n$ are trivialised by 
the round volume form), the bundle induced from the special frame-bundle of 
${\mathbb{T}}$ by the representation 
\begin{equation}\label{heavytractors}\begin{picture}(100,10)
\put(0,0){\line(1,0){60}}
\put(70,0){\makebox(0,0){$\cdots$}}
\put(80,0){\line(1,0){20}}
\put(0,0){\makebox(0,0){$\bullet$}}
\put(0,7){\makebox(0,0){\scriptsize$0$}}
\put(20,0){\makebox(0,0){$\bullet$}}
\put(20,7){\makebox(0,0){\scriptsize$\ell-1$}}
\put(40,0){\makebox(0,0){$\bullet$}}
\put(40,7){\makebox(0,0){\scriptsize$0$}}
\put(55,0){\makebox(0,0){$\bullet$}}
\put(55,7){\makebox(0,0){\scriptsize$0$}}
\put(85,0){\makebox(0,0){$\bullet$}}
\put(85,7){\makebox(0,0){\scriptsize$0$}}
\put(100,0){\makebox(0,0){$\bullet$}}
\put(100,7){\makebox(0,0){\scriptsize$0$}}
\end{picture}\end{equation}
of ${\mathrm{SL}}(n+1,{\mathbb{R}})$. A salient feature of this bundle is 
its structure when written as ordinary tensor bundles on~${\mathbb{RP}}_n$:--
\[\begin{array}l\begin{picture}(54,12)
\put(0,0){\line(1,0){54}}
\put(0,6){\line(1,0){54}}
\put(0,12){\line(1,0){54}}
\put(0,0){\line(0,1){12}}
\put(6,0){\line(0,1){12}}
\put(12,0){\line(0,1){12}}
\put(18,0){\line(0,1){12}}
\put(30,3){\makebox(0,0){$\cdots$}}
\put(30,9){\makebox(0,0){$\cdots$}}
\put(42,0){\line(0,1){12}}
\put(48,0){\line(0,1){12}}
\put(54,0){\line(0,1){12}}
\end{picture}\;
\mbox{\LARGE${\mathbb{T}}$}\raisebox{2pt}{ $=\quad
\begin{picture}(80,10)
\put(0,0){\line(1,0){40}}
\put(50,0){\makebox(0,0){$\cdots$}}
\put(60,0){\line(1,0){20}}
\put(0,0){\makebox(0,0){$\bullet$}}
\put(0,7){\makebox(0,0){\scriptsize$\ell-1$}}
\put(20,0){\makebox(0,0){$\bullet$}}
\put(20,7){\makebox(0,0){\scriptsize$0$}}
\put(35,0){\makebox(0,0){$\bullet$}}
\put(35,7){\makebox(0,0){\scriptsize$0$}}
\put(65,0){\makebox(0,0){$\bullet$}}
\put(65,7){\makebox(0,0){\scriptsize$0$}}
\put(80,0){\makebox(0,0){$\bullet$}}
\put(80,7){\makebox(0,0){\scriptsize$0$}}
\end{picture}\enskip\oplus\quad\begin{picture}(80,10)
\put(0,0){\line(1,0){40}}
\put(50,0){\makebox(0,0){$\cdots$}}
\put(60,0){\line(1,0){20}}
\put(0,0){\makebox(0,0){$\bullet$}}
\put(0,7){\makebox(0,0){\scriptsize$\ell-2$}}
\put(20,0){\makebox(0,0){$\bullet$}}
\put(20,7){\makebox(0,0){\scriptsize$1$}}
\put(35,0){\makebox(0,0){$\bullet$}}
\put(35,7){\makebox(0,0){\scriptsize$0$}}
\put(65,0){\makebox(0,0){$\bullet$}}
\put(65,7){\makebox(0,0){\scriptsize$0$}}
\put(80,0){\makebox(0,0){$\bullet$}}
\put(80,7){\makebox(0,0){\scriptsize$0$}}
\end{picture}$}\\
\hspace*{80pt}\oplus\quad\begin{picture}(80,10)
\put(0,0){\line(1,0){40}}
\put(50,0){\makebox(0,0){$\cdots$}}
\put(60,0){\line(1,0){20}}
\put(0,0){\makebox(0,0){$\bullet$}}
\put(0,7){\makebox(0,0){\scriptsize$\ell-3$}}
\put(20,0){\makebox(0,0){$\bullet$}}
\put(20,7){\makebox(0,0){\scriptsize$2$}}
\put(35,0){\makebox(0,0){$\bullet$}}
\put(35,7){\makebox(0,0){\scriptsize$0$}}
\put(65,0){\makebox(0,0){$\bullet$}}
\put(65,7){\makebox(0,0){\scriptsize$0$}}
\put(80,0){\makebox(0,0){$\bullet$}}
\put(80,7){\makebox(0,0){\scriptsize$0$}}
\end{picture}\enskip\oplus\cdots\oplus\quad
\begin{picture}(85,10)
\put(0,0){\line(1,0){45}}
\put(55,0){\makebox(0,0){$\cdots$}}
\put(65,0){\line(1,0){20}}
\put(0,0){\makebox(0,0){$\bullet$}}
\put(0,7){\makebox(0,0){\scriptsize$0$}}
\put(20,0){\makebox(0,0){$\bullet$}}
\put(20,7){\makebox(0,0){\scriptsize$\ell-1$}}
\put(40,0){\makebox(0,0){$\bullet$}}
\put(40,7){\makebox(0,0){\scriptsize$0$}}
\put(70,0){\makebox(0,0){$\bullet$}}
\put(70,7){\makebox(0,0){\scriptsize$0$}}
\put(85,0){\makebox(0,0){$\bullet$}}
\put(85,7){\makebox(0,0){\scriptsize$0$}}
\end{picture}\end{array}\]
in terms of which the induced connection takes the form
\begin{equation}\label{heavyconnection}
\left[\begin{array}c\sigma\\ \mu\\ \rho\\ \vdots\end{array}\right]
\stackrel{\nabla}{\longmapsto}
\left[\begin{array}c\nabla\sigma-\partial\mu\\ 
\nabla\mu-\partial\rho+g\bowtie\sigma\\ \vdots \\ {}
\end{array}\right]\end{equation}
for some appropriate tensor combination $g\bowtie\sigma$ where, 
following~\cite{bceg}, the homomorphism 
\[\raisebox{3pt}{$\partial:{}$}\begin{picture}(54,12)
\put(0,0){\line(1,0){54}}
\put(0,6){\line(1,0){54}}
\put(0,12){\line(1,0){54}}
\put(0,0){\line(0,1){12}}
\put(6,0){\line(0,1){12}}
\put(12,0){\line(0,1){12}}
\put(18,0){\line(0,1){12}}
\put(30,3){\makebox(0,0){$\cdots$}}
\put(30,9){\makebox(0,0){$\cdots$}}
\put(42,0){\line(0,1){12}}
\put(48,0){\line(0,1){12}}
\put(54,0){\line(0,1){12}}
\end{picture}\;
\mbox{\LARGE${\mathbb{T}}$}\raisebox{3pt}{ $\longrightarrow\Lambda^1\otimes$ }
\begin{picture}(54,12)
\put(0,0){\line(1,0){54}}
\put(0,6){\line(1,0){54}}
\put(0,12){\line(1,0){54}}
\put(0,0){\line(0,1){12}}
\put(6,0){\line(0,1){12}}
\put(12,0){\line(0,1){12}}
\put(18,0){\line(0,1){12}}
\put(30,3){\makebox(0,0){$\cdots$}}
\put(30,9){\makebox(0,0){$\cdots$}}
\put(42,0){\line(0,1){12}}
\put(48,0){\line(0,1){12}}
\put(54,0){\line(0,1){12}}
\end{picture}\;
\mbox{\LARGE${\mathbb{T}}$}
\]
is best regarded as induced by the Lie algebra differential 
\begin{equation}\label{partial}\partial:\enskip\begin{picture}(100,10)
\put(0,0){\line(1,0){60}}
\put(70,0){\makebox(0,0){$\cdots$}}
\put(80,0){\line(1,0){20}}
\put(0,0){\makebox(0,0){$\bullet$}}
\put(0,7){\makebox(0,0){\scriptsize$0$}}
\put(20,0){\makebox(0,0){$\bullet$}}
\put(20,7){\makebox(0,0){\scriptsize$\ell-1$}}
\put(40,0){\makebox(0,0){$\bullet$}}
\put(40,7){\makebox(0,0){\scriptsize$0$}}
\put(55,0){\makebox(0,0){$\bullet$}}
\put(55,7){\makebox(0,0){\scriptsize$0$}}
\put(85,0){\makebox(0,0){$\bullet$}}
\put(85,7){\makebox(0,0){\scriptsize$0$}}
\put(100,0){\makebox(0,0){$\bullet$}}
\put(100,7){\makebox(0,0){\scriptsize$0$}}
\end{picture}\enskip\longrightarrow
{\mathfrak{g}}_1\otimes\enskip\begin{picture}(100,10)
\put(0,0){\line(1,0){60}}
\put(70,0){\makebox(0,0){$\cdots$}}
\put(80,0){\line(1,0){20}}
\put(0,0){\makebox(0,0){$\bullet$}}
\put(0,7){\makebox(0,0){\scriptsize$0$}}
\put(20,0){\makebox(0,0){$\bullet$}}
\put(20,7){\makebox(0,0){\scriptsize$\ell-1$}}
\put(40,0){\makebox(0,0){$\bullet$}}
\put(40,7){\makebox(0,0){\scriptsize$0$}}
\put(55,0){\makebox(0,0){$\bullet$}}
\put(55,7){\makebox(0,0){\scriptsize$0$}}
\put(85,0){\makebox(0,0){$\bullet$}}
\put(85,7){\makebox(0,0){\scriptsize$0$}}
\put(100,0){\makebox(0,0){$\bullet$}}
\put(100,7){\makebox(0,0){\scriptsize$0$}}
\end{picture}\:\raisebox{-2pt}{.}\end{equation}
Here, ${\mathfrak{sl}}(n+1,{\mathbb{R}})$ is regarded as a $|1|$-graded Lie 
algebra
\[\begin{array}l{\mathfrak{sl}}(n+1,{\mathbb{R}})=
{\mathfrak{g}}_{-1}\oplus{\mathfrak{g}}\oplus{\mathfrak{g}}_1\\
=\mbox{\footnotesize$
\left\{
\left\lgroup\begin{tabular}{c|ccc}$0$&$0$&$\!\cdots\!$&$0$\\ \hline
$\ast$\\
\raisebox{2pt}[15pt]{$\vdots$}&&
\raisebox{-2pt}[0pt][0pt]{\makebox[0pt]{\Huge$0$}}\\
$\ast$\end{tabular}\right\rgroup\right\}\mbox{\normalsize$\oplus$}
\left\{
\left\lgroup\begin{tabular}{c|ccc}$\ast$&$0$&$\!\cdots\!$&$0$\\ \hline
$0$\\
\raisebox{2pt}[15pt]{$\vdots$}&&
\raisebox{-2pt}[0pt][0pt]{\makebox[0pt]{\Huge$\ast$}}\\
$0$\end{tabular}\right\rgroup\right\}\mbox{\normalsize$\oplus$}
\left\{
\left\lgroup\begin{tabular}{c|ccc}$0$&$\ast$&$\!\cdots\!$&$\ast$\\ \hline
$0$\\
\raisebox{2pt}[15pt]{$\vdots$}&&
\raisebox{-2pt}[0pt][0pt]{\makebox[0pt]{\Huge$0$}}\\
$0$\end{tabular}\right\rgroup\right\}$}\end{array}\]
and the ${\mathfrak{sl}}(n+1,{\mathbb{R}})$-module (\ref{heavytractors}) is 
restricted to ${\mathfrak{g}}_{-1}$ for the purposes of~(\ref{partial}). The 
operator 
$\phi_{bc\cdots d}\mapsto\omega_{abc\cdots d}=\nabla_{(a}\phi_{bc\cdots d)}$
appears within (\ref{heavyconnection}) as the equation
$\nabla\sigma-\partial\mu=\omega$ for some~$\mu$ and the analogues of (i), 
(ii), (iii) from the case $\ell=2$ readily arise. As detailed in~\cite{bceg}, 
the Lie algebra cohomologies
\[\begin{array}{rcl}H^0({\mathfrak{g}}_{-1},\;\begin{picture}(100,10)
\put(0,0){\line(1,0){60}}
\put(70,0){\makebox(0,0){$\cdots$}}
\put(80,0){\line(1,0){20}}
\put(0,0){\makebox(0,0){$\bullet$}}
\put(0,7){\makebox(0,0){\scriptsize$0$}}
\put(20,0){\makebox(0,0){$\bullet$}}
\put(20,7){\makebox(0,0){\scriptsize$\ell-1$}}
\put(40,0){\makebox(0,0){$\bullet$}}
\put(40,7){\makebox(0,0){\scriptsize$0$}}
\put(55,0){\makebox(0,0){$\bullet$}}
\put(55,7){\makebox(0,0){\scriptsize$0$}}
\put(85,0){\makebox(0,0){$\bullet$}}
\put(85,7){\makebox(0,0){\scriptsize$0$}}
\put(100,0){\makebox(0,0){$\bullet$}}
\put(100,7){\makebox(0,0){\scriptsize$0$}}
\end{picture}\;)&=&\enskip\begin{picture}(80,10)
\put(0,0){\line(1,0){40}}
\put(50,0){\makebox(0,0){$\cdots$}}
\put(60,0){\line(1,0){20}}
\put(0,0){\makebox(0,0){$\bullet$}}
\put(0,7){\makebox(0,0){\scriptsize$\ell-1$}}
\put(20,0){\makebox(0,0){$\bullet$}}
\put(20,7){\makebox(0,0){\scriptsize$0$}}
\put(35,0){\makebox(0,0){$\bullet$}}
\put(35,7){\makebox(0,0){\scriptsize$0$}}
\put(65,0){\makebox(0,0){$\bullet$}}
\put(65,7){\makebox(0,0){\scriptsize$0$}}
\put(80,0){\makebox(0,0){$\bullet$}}
\put(80,7){\makebox(0,0){\scriptsize$0$}}
\end{picture}\\[5pt]
H^1({\mathfrak{g}}_{-1},\;\begin{picture}(100,10)
\put(0,0){\line(1,0){60}}
\put(70,0){\makebox(0,0){$\cdots$}}
\put(80,0){\line(1,0){20}}
\put(0,0){\makebox(0,0){$\bullet$}}
\put(0,7){\makebox(0,0){\scriptsize$0$}}
\put(20,0){\makebox(0,0){$\bullet$}}
\put(20,7){\makebox(0,0){\scriptsize$\ell-1$}}
\put(40,0){\makebox(0,0){$\bullet$}}
\put(40,7){\makebox(0,0){\scriptsize$0$}}
\put(55,0){\makebox(0,0){$\bullet$}}
\put(55,7){\makebox(0,0){\scriptsize$0$}}
\put(85,0){\makebox(0,0){$\bullet$}}
\put(85,7){\makebox(0,0){\scriptsize$0$}}
\put(100,0){\makebox(0,0){$\bullet$}}
\put(100,7){\makebox(0,0){\scriptsize$0$}}
\end{picture}\;)&=&\begin{picture}(75,10)
\put(0,0){\line(1,0){35}}
\put(45,0){\makebox(0,0){$\cdots$}}
\put(55,0){\line(1,0){20}}
\put(0,0){\makebox(0,0){$\bullet$}}
\put(0,7){\makebox(0,0){\scriptsize$\ell$}}
\put(15,0){\makebox(0,0){$\bullet$}}
\put(15,7){\makebox(0,0){\scriptsize$0$}}
\put(30,0){\makebox(0,0){$\bullet$}}
\put(30,7){\makebox(0,0){\scriptsize$0$}}
\put(60,0){\makebox(0,0){$\bullet$}}
\put(60,7){\makebox(0,0){\scriptsize$0$}}
\put(75,0){\makebox(0,0){$\bullet$}}
\put(75,7){\makebox(0,0){\scriptsize$0$}}
\end{picture}
\end{array}\]
provide the representations of ${\mathrm{SL}}(n,{\mathbb{R}})$ inducing the
vector bundles on ${\mathbb{RP}}_n$ between which the differential operator
$\phi_{bc\cdots d}\mapsto\nabla_{(a}\phi_{bc\cdots d)}$ acts. Reasoning as 
for the case $\ell=2$ above, it is the second cohomology 
\[H^2({\mathfrak{g}}_{-1},\;\begin{picture}(100,10)
\put(0,0){\line(1,0){60}}
\put(70,0){\makebox(0,0){$\cdots$}}
\put(80,0){\line(1,0){20}}
\put(0,0){\makebox(0,0){$\bullet$}}
\put(0,7){\makebox(0,0){\scriptsize$0$}}
\put(20,0){\makebox(0,0){$\bullet$}}
\put(20,7){\makebox(0,0){\scriptsize$\ell-1$}}
\put(40,0){\makebox(0,0){$\bullet$}}
\put(40,7){\makebox(0,0){\scriptsize$0$}}
\put(55,0){\makebox(0,0){$\bullet$}}
\put(55,7){\makebox(0,0){\scriptsize$0$}}
\put(85,0){\makebox(0,0){$\bullet$}}
\put(85,7){\makebox(0,0){\scriptsize$0$}}
\put(100,0){\makebox(0,0){$\bullet$}}
\put(100,7){\makebox(0,0){\scriptsize$0$}}
\end{picture}\;)\enskip=\enskip\begin{picture}(75,10)
\put(0,0){\line(1,0){35}}
\put(45,0){\makebox(0,0){$\cdots$}}
\put(55,0){\line(1,0){20}}
\put(0,0){\makebox(0,0){$\bullet$}}
\put(0,7){\makebox(0,0){\scriptsize$0$}}
\put(15,0){\makebox(0,0){$\bullet$}}
\put(15,7){\makebox(0,0){\scriptsize$\ell$}}
\put(30,0){\makebox(0,0){$\bullet$}}
\put(30,7){\makebox(0,0){\scriptsize$0$}}
\put(60,0){\makebox(0,0){$\bullet$}}
\put(60,7){\makebox(0,0){\scriptsize$0$}}
\put(75,0){\makebox(0,0){$\bullet$}}
\put(75,7){\makebox(0,0){\scriptsize$0$}}
\end{picture}
\]
(computed according to Kostant's Theorem~\cite{k}) that induces the vector
bundle providing the obstruction to being in the range of this operator.

For the purposes of this article, it is not necessary to know the exact formula
for this obstruction but only how it arises from (\ref{bigtractors}) with its 
flat connection and that it has the form
\[\pi\left(
\nabla_{(p}\nabla_q\nabla_r\cdots\nabla_{s)}\omega_{abc\cdots d}+
\mbox{\rm lower order $g$-trace terms}\right)=0\]
and this much is clear by construction.
\end{proof}

We are now in a position to consider the corresponding problem on
${\mathbb{CP}}_n$ as posed at the beginning of this section. Recall that
$Y^{\ell}$ denotes a certain tensor bundle bundle on any manifold but, in
particular, on~${\mathbb{CP}}_n$. Therefore, parallel to (\ref{thesecondrow})
on~${\mathbb{RP}}_n$, we may consider the sequence of bundles and linear
differential operators
\[\textstyle\bigodot^{\ell-1}\!\Lambda^1\xrightarrow{\,\nabla\,}
\bigodot^{\ell}\!\Lambda^1\xrightarrow{\,\nabla^{(\ell)}\,}Y^\ell\]
on~${\mathbb{CP}}_n$, where $\nabla^{(\ell)}$ is given by exactly the same
formula (\ref{RPcompatibility}) as on ${\mathbb{RP}}_n$ except that $\nabla_a$
now refers to the Fubini-Study connection and $g_{ab}$ to the Fubini-Study
metric. This sequence is no longer exact. Instead, if we expand the composition
$\nabla^{(\ell)}\circ\nabla$ using (\ref{curvatures}) on~${\mathbb{CP}}_n$,
bearing in mind that both $g_{ab}$ and $J_{ab}$ are covariant constant, and
compare the result with the total cancellation that we know occurs
on~${\mathbb{RP}}_n$, then we conclude that the result is forced to be of the
form 
\[\phi\mapsto J\circledcirc D\phi,\]
where $D:\bigodot^{\ell-1}\!\Lambda^1\to Y^{\ell-1}$ is some linear 
differential operator and
\[\circledcirc:\Lambda^2\otimes Y^{\ell-1}\to Y^\ell\]
is induced by projection onto the first factor in the decomposition
\[\begin{picture}(75,10)
\put(0,0){\line(1,0){35}}
\put(45,0){\makebox(0,0){$\cdots$}}
\put(55,0){\line(1,0){20}}
\put(0,0){\makebox(0,0){$\bullet$}}
\put(0,7){\makebox(0,0){\scriptsize$0$}}
\put(15,0){\makebox(0,0){$\bullet$}}
\put(15,7){\makebox(0,0){\scriptsize$1$}}
\put(30,0){\makebox(0,0){$\bullet$}}
\put(30,7){\makebox(0,0){\scriptsize$0$}}
\put(60,0){\makebox(0,0){$\bullet$}}
\put(60,7){\makebox(0,0){\scriptsize$0$}}
\put(75,0){\makebox(0,0){$\bullet$}}
\put(75,7){\makebox(0,0){\scriptsize$0$}}
\end{picture}\enskip\otimes\enskip\begin{picture}(85,10)
\put(0,0){\line(1,0){45}}
\put(55,0){\makebox(0,0){$\cdots$}}
\put(65,0){\line(1,0){20}}
\put(0,0){\makebox(0,0){$\bullet$}}
\put(0,7){\makebox(0,0){\scriptsize$0$}}
\put(20,0){\makebox(0,0){$\bullet$}}
\put(20,7){\makebox(0,0){\scriptsize$\ell-1$}}
\put(40,0){\makebox(0,0){$\bullet$}}
\put(40,7){\makebox(0,0){\scriptsize$0$}}
\put(70,0){\makebox(0,0){$\bullet$}}
\put(70,7){\makebox(0,0){\scriptsize$0$}}
\put(85,0){\makebox(0,0){$\bullet$}}
\put(85,7){\makebox(0,0){\scriptsize$0$}}
\end{picture}\enskip=\enskip\begin{picture}(75,10)
\put(0,0){\line(1,0){35}}
\put(45,0){\makebox(0,0){$\cdots$}}
\put(55,0){\line(1,0){20}}
\put(0,0){\makebox(0,0){$\bullet$}}
\put(0,7){\makebox(0,0){\scriptsize$0$}}
\put(15,0){\makebox(0,0){$\bullet$}}
\put(15,7){\makebox(0,0){\scriptsize$\ell$}}
\put(30,0){\makebox(0,0){$\bullet$}}
\put(30,7){\makebox(0,0){\scriptsize$0$}}
\put(60,0){\makebox(0,0){$\bullet$}}
\put(60,7){\makebox(0,0){\scriptsize$0$}}
\put(75,0){\makebox(0,0){$\bullet$}}
\put(75,7){\makebox(0,0){\scriptsize$0$}}
\end{picture}\enskip\oplus\cdots\]
of ${\mathrm{SL}}(2n,{\mathbb{R}})$-modules. In particular, we conclude that
the composition
\[\textstyle\bigodot^{\ell-1}\!\Lambda^1\xrightarrow{\,\nabla\,}
\bigodot^{\ell}\!\Lambda^1\xrightarrow{\,\nabla^{(\ell)}\,}Y^\ell
\xrightarrow{\,\perp\,}Y_\perp^\ell\] 
vanishes on~${\mathbb{CP}}_n$, where $\perp$ is the homomorphism of vector
bundles on ${\mathbb{CP}}_n$ induced by (\ref{totallytracefreepart}) and
$Y_\perp^\ell$ is induced by the irreducible
${\mathrm{Sp}}(2n,{\mathbb{R}})$-module
${}\;\begin{picture}(75,10)
\put(0,0){\line(1,0){35}}
\put(45,0){\makebox(0,0){$\cdots$}}
\put(55,0){\line(1,0){5}}
\put(60,1){\line(1,0){15}}
\put(60,-1){\line(1,0){15}}
\put(0,0){\makebox(0,0){$\bullet$}}
\put(0,7){\makebox(0,0){\scriptsize$0$}}
\put(15,0){\makebox(0,0){$\bullet$}}
\put(15,7){\makebox(0,0){\scriptsize$\ell$}}
\put(30,0){\makebox(0,0){$\bullet$}}
\put(30,7){\makebox(0,0){\scriptsize$0$}}
\put(60,0){\makebox(0,0){$\bullet$}}
\put(60,7){\makebox(0,0){\scriptsize$0$}}
\put(75,0){\makebox(0,0){$\bullet$}}
\put(75,7){\makebox(0,0){\scriptsize$0$}}
\put(67,0){\makebox(0,0){$\langle$}}
\end{picture}\;$. Writing $\nabla_\perp^{(\ell)}$ for the composition 
\begin{equation}\label{composition}
\textstyle\bigodot^{\ell}\!\Lambda^1\xrightarrow{\,\nabla^{(\ell)}\,}Y^\ell
\xrightarrow{\,\perp\,}Y_\perp^\ell,\end{equation}
we have proved the following.
\begin{theorem}\label{necessaryonCPn}
On ${\mathbb{CP}}_n$ for $n\geq 2$, there is a complex of 
linear differential operators
\[\textstyle\bigodot^{\ell-1}\!\Lambda^1\xrightarrow{\,\nabla\,}
\bigodot^{\ell}\!\Lambda^1\xrightarrow{\,\nabla_\perp^{(\ell)}\,}
Y_\perp^\ell.\] 
The operator $\nabla_\perp^{(\ell)}$ has the form
\[\pi_\perp\left(
\nabla_{(p}\nabla_q\nabla_r\cdots\nabla_{s)}\omega_{abc\cdots d}+
\mbox{\rm lower order $g$-trace terms}\right),\]
where $\pi_\perp$ is the composition 
$\bigodot^\ell\!\Lambda^1\otimes\bigodot^\ell\!\Lambda^1
\xrightarrow{\,\pi\,}Y^\ell\xrightarrow{\,\perp\,}Y_\perp^\ell$.
\end{theorem}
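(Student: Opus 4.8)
The plan is to reduce the whole statement to the total cancellation already known on $\mathbb{RP}_n$. On $\mathbb{CP}_n$ I would define $\nabla^{(\ell)}$ by the \emph{same} universal formula~(\ref{RPcompatibility}), now reading $\nabla_a$ as the Fubini--Study connection and $g_{ab}$ as the Fubini--Study metric, so that $\nabla^{(\ell)}$ still takes values in $Y^\ell$. With this convention the shape claimed for $\nabla_\perp^{(\ell)}=\perp\circ\nabla^{(\ell)}$ is automatic, since $\pi$ is simply replaced throughout by $\pi_\perp=\perp\circ\pi$; so the entire content of the theorem is the vanishing of the composite $\perp\circ\nabla^{(\ell)}\circ\nabla$.

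To establish that vanishing I would expand $\nabla^{(\ell)}\circ\nabla$ as a differential operator, repeatedly commuting covariant derivatives and converting each commutator into Riemann curvature via~(\ref{curvatures}). Term by term, this expansion on $\mathbb{CP}_n$ coincides with the one on $\mathbb{RP}_n$ \emph{except} at the points where the curvature is invoked: there the round curvature $g_{ac}g_{bd}-g_{bc}g_{ad}$ is replaced by the Fubini--Study curvature, which carries the extra summand $J_{ac}J_{bd}-J_{bc}J_{ad}+2J_{ab}J_{cd}$. By Theorem~\ref{BGGonRP} the $\mathbb{RP}_n$ expansion cancels completely; subtracting, the $\mathbb{CP}_n$ answer must equal precisely the aggregate of these extra terms, each carrying an explicit factor of $J_{ab}$. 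Because $J_{ab}$ is covariant constant, any such factor commutes past every $\nabla_a$ and can be extracted whole, exhibiting the composite in the form $\phi\mapsto J\circledcirc D\phi$ for a linear differential operator $D\colon\bigodot^{\ell-1}\Lambda^1\to Y^{\ell-1}$ and the projection $\circledcirc\colon\Lambda^2\otimes Y^{\ell-1}\to Y^\ell$ described above.

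It remains to apply $\perp$. Since $J$ spans the trivial $\mathrm{Sp}(2n,\mathbb{R})$-summand of $\Lambda^2$, the element $J\circledcirc D\phi$ is, in the language of~(\ref{totallytracefreepart}), a term of the type $J_{ab}\bowtie\theta$, hence is annihilated by the projection $\perp$ onto the trace-free module $Y_\perp^\ell$. This gives $\nabla_\perp^{(\ell)}\circ\nabla=0$ and the asserted complex. As an independent conceptual check one may instead invoke Corollary~\ref{linearalgebra}: as $\nabla^{(\ell)}\nabla\phi$ is a section of $Y^\ell$, its trace-free part vanishes if and only if it restricts to zero on every model embedding, and on a totally geodesic $\mathbb{RP}_n$ that restriction is the intrinsic $\nabla^{(\ell)}\nabla(\phi|_{\mathbb{RP}_n})$, which is zero by Theorem~\ref{BGGonRP}.

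The step I expect to be delicate is the middle one: showing that the discrepancy between the two expansions is captured \emph{exactly} by the $J$-terms, with no residual universal piece and with the spawned $J$-factors assembling into a single well-defined operator valued in $Y^{\ell-1}$ rather than leaking into $Y_\perp^\ell$. This rests on the $g$-curvature parts of the two metrics being literally identical, so that the round cancellation transfers verbatim, and on using $\nabla J=0$ to move every $J$ outside the full string of derivatives. Once that bookkeeping is under control, the representation-theoretic fact that $\perp$ kills anything carrying an explicit $J$ is the immediate finish.
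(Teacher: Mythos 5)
Your argument is essentially the paper's own: the paper likewise defines $\nabla^{(\ell)}$ on ${\mathbb{CP}}_n$ by the same formula (\ref{RPcompatibility}), expands $\nabla^{(\ell)}\circ\nabla$ using (\ref{curvatures}) together with the covariant constancy of $g_{ab}$ and $J_{ab}$, compares with the total cancellation known on ${\mathbb{RP}}_n$ to conclude the composition is of the form $\phi\mapsto J\circledcirc D\phi$, and then notes that $\perp$ annihilates anything carrying an explicit $J$-factor. Your supplementary check via Corollary~\ref{linearalgebra} and restriction to totally geodesic model embeddings is also sound, and is in fact the mechanism the paper deploys later (in the proof of Theorem~\ref{sufficiencyonCPn}) to characterise $\nabla_\perp^{(\ell)}$.
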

In particular, Theorem~\ref{necessaryonCPn} provides necessary conditions for a
globally defined smooth symmetric tensor $\omega_{ab\cdots c}$ on
${\mathbb{CP}}_n$ to be expressible in the form~$\nabla_{(a}\phi_{b\cdots c)}$
for some globally defined smooth symmetric tensor~$\phi_{b\cdots c}$. The
following section will show that these conditions are also sufficient.

\section{Sufficiency on~${\mathbb{CP}}_n$}

\subsection{The case $\ell=1$} In this case Theorem~\ref{necessaryonCPn} 
merely states that
\begin{equation}\label{beginRScomplex}
\Lambda^0\xrightarrow{\,d\,}\Lambda^1\xrightarrow{\,d_\perp\,}\Lambda_\perp^2
\end{equation}
is a complex on ${\mathbb{CP}}_n$, where $\Lambda_\perp^2$ denotes the bundle
of $2$-forms trace-free with respect to the K\"ahler form $J_{ab}$. This much
is clear and it is easy to identify the local cohomology of
(\ref{beginRScomplex}) as follows.
\begin{proposition} As a complex of sheaves, the cohomology of 
{\rm(\ref{beginRScomplex})} may be identified with the locally constant sheaf 
${\mathbb{R}}$. 
\end{proposition}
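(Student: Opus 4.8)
The plan is to compute the cohomology sheaves of the three--term complex~(\ref{beginRScomplex}) locally, over a contractible coordinate ball where the Poincar\'e lemma is available, and to check that in each degree where it is nonzero one obtains the trivial constant sheaf $\mathbb{R}$ (and no twisted local system). At the $\Lambda^0$ spot this is immediate: the cohomology sheaf is $\ker(d\colon\Lambda^0\to\Lambda^1)$, the sheaf of locally constant functions, which is $\mathbb{R}$, and no geometry of ${\mathbb{CP}}_n$ enters. The content of the proposition is at the $\Lambda^1$ spot, where I must identify $\ker d_\perp/\im d$, and I claim this too is the constant sheaf $\mathbb{R}$, generated locally by a primitive of the K\"ahler form.

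First I would unwind the cocycle condition. Since two--forms decompose as $\Lambda^2=\Lambda_\perp^2\oplus\langle J\rangle$ (the primitive decomposition, under which $\perp(J)=0$), the equation $d_\perp\phi=0$ for a locally defined one--form $\phi$ says precisely that $d\phi=\theta J$ for some function~$\theta$. Applying $d$ and using $dJ=0$ gives $d\theta\wedge J=0$. This is the one step where the geometry enters: for $n\geq 2$ wedging with $J$ is injective on one--forms, because $\gamma\wedge J=0$ forces $\gamma\wedge J^{n-1}=(\gamma\wedge J)\wedge J^{n-2}=0$, and $\gamma\mapsto\gamma\wedge J^{n-1}$ is the hard Lefschetz isomorphism $\Lambda^1\to\Lambda^{2n-1}$. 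Hence $d\theta=0$ and $\theta$ is locally constant.

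Next I would assemble the identification from the canonical assignment $\phi\mapsto\theta$ determined by $d\phi=\theta J$. By the previous step this takes values in the constant sheaf, so it defines a map of sheaves $\ker d_\perp\to\mathbb{R}$. It is surjective because on a contractible ball the closed form $J$ is exact, $J=d\alpha$, so $\theta\alpha$ realises any constant~$\theta$; and its kernel is the sheaf of closed one--forms, which by the Poincar\'e lemma is exactly $\im d$. Thus $\ker d_\perp=\im d\oplus\mathbb{R}\alpha$ over the ball and the induced map $\ker d_\perp/\im d\to\mathbb{R}$ is an isomorphism. Because $\phi\mapsto\theta$ is independent of all choices, the cohomology sheaf is genuinely the \emph{constant} sheaf $\mathbb{R}$ rather than merely $\mathbb{R}$ stalkwise.

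The only real obstacle, and the sole place the hypothesis $n\geq 2$ is used, is this injectivity of wedging with $J$ on one--forms: it is what pins $\theta$ down to a constant, and it fails for $n=1$, where $\Lambda_\perp^2=0$ and the statement degenerates, consistently with the $n=1$ remark in the Corollary above. Everything else is formal --- the primitive decomposition of two--forms together with two applications of the Poincar\'e lemma. I would close by noting that the nonvanishing of the degree--one cohomology sheaf is the local shadow of the K\"ahler class generating $H^2({\mathbb{CP}}_n,\mathbb{R})$, the feature that will matter once~(\ref{beginRScomplex}) is used globally.
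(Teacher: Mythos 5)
Your argument is correct and is essentially the paper's own proof: both unwind $d_\perp\omega=0$ as $d\omega=\theta J$, use closedness and non-degeneracy of $J$ (for $n\geq 2$) to force $d\theta=0$, and then use a local primitive $\alpha$ of $J$ together with the Poincar\'e lemma to identify the middle cohomology with the locally constant $\theta$. Your packaging of this as the sheaf map $\omega\mapsto\theta$ and your explicit Lefschetz justification of the injectivity of wedging with $J$ are minor elaborations of the same route, not a different one.
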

\begin{proof}
Suppose $\omega$ is a locally defined $1$-form with $d_\perp\omega=0$. This 
means that $d\omega=\theta J$ for some smooth function~$\theta$. Applying $d$ 
gives
\[0=d\theta\wedge J+\theta\, dJ=d\theta\wedge J\]
because $J$ is closed. But since $J$ is non-degenerate and $n\geq 2$, it 
follows by linear algebra that $d\theta=0$. Hence $\theta$ is locally 
constant. As $J$ is closed, locally we may choose a $1$-form $\alpha$ so that 
$J=d\alpha$. Then $d(\omega-\theta\alpha)=0$ and we conclude that locally we 
may always write
\begin{equation}\label{decomp}
\omega=d\phi+\theta\alpha\qquad\mbox{for some some function $\phi$},
\end{equation}
where $d\alpha=J$ and $\theta$ is locally constant. Although $\alpha$ is not
determined by $J$, the only freedom in its choice is
$\alpha\mapsto\alpha+d\psi$ for some smooth function~$\psi$, which may be 
absorbed into the decomposition (\ref{decomp}) as
\[\omega=d\phi+\theta\alpha=d(\phi-\theta\psi)+\theta(\alpha+d\psi).\]
In particular, the locally constant function $\theta$ is well defined by the 
local cohomology of the complex~(\ref{beginRScomplex}).
\end{proof}
Globally on ${\mathbb{CP}}_n$, however, there is no $1$-form $\alpha$ with
$d\alpha=J$. Hence, as was already observed by J.-P.~Demailly
(cf.~\cite[Introduction]{gg3} or \cite[Theorem~3.40]{gg6}), the same line of
argument shows that globally there is no cohomology. In other words, we have
proved our desired global result as follows.
\begin{theorem}\label{desired}
The following complex
\[\Gamma({\mathbb{CP}}_n,\Lambda^0)\xrightarrow{\,d\,}
\Gamma({\mathbb{CP}}_n,\Lambda^1)\xrightarrow{\,d_\perp\,}
\Gamma({\mathbb{CP}}_n,\Lambda_\perp^2)\]
is exact.\end{theorem}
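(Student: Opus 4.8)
The plan is to globalize the local computation that was just carried out in the preceding Proposition. The local cohomology of the complex $\Lambda^0\xrightarrow{d}\Lambda^1\xrightarrow{d_\perp}\Lambda_\perp^2$ was identified with the constant sheaf $\mathbb{R}$, and the key point is that this constant sheaf contribution is governed by a single globally defined locally constant function $\theta$, which (via $d\omega=\theta J$) measures the failure of $\omega$ to be locally exact. So the task reduces to showing that on the compact manifold ${\mathbb{CP}}_n$ this potential obstruction actually vanishes whenever $\omega$ is globally defined. First I would take a global $1$-form $\omega\in\Gamma({\mathbb{CP}}_n,\Lambda^1)$ with $d_\perp\omega=0$, which by definition means $d\omega=\theta J$ for some smooth function $\theta$, and then repeat the argument from the proof of the Proposition to conclude that $\theta$ is (globally) a constant.

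The crux of the matter, and the step I expect to be the main obstacle in a direct approach, is ruling out a nonzero constant $\theta$. The naive local move was to write $J=d\alpha$ locally and set $\omega=d\phi+\theta\alpha$; but this fails globally precisely because the K\"ahler class $[J]\in H^2({\mathbb{CP}}_n,\mathbb{R})$ is nonzero, so no global primitive $\alpha$ exists. I would exploit exactly this obstruction rather than fight it. Since $d\omega=\theta J$, taking de~Rham cohomology classes gives $0=[d\omega]=\theta[J]$ in $H^2({\mathbb{CP}}_n,\mathbb{R})$. Because ${\mathbb{CP}}_n$ is a compact K\"ahler manifold, $[J]\neq 0$ (indeed $[J]^n\neq 0$ as it integrates to the volume), and therefore the scalar $\theta$ must vanish.

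Once $\theta=0$ is established, the rest is immediate and requires no further ingenuity. The condition $d_\perp\omega=0$ together with $\theta=0$ forces $d\omega=0$, so $\omega$ is a closed global $1$-form. Since $H^1({\mathbb{CP}}_n,\mathbb{R})=0$ for all $n\geq 1$, any closed $1$-form is exact, so $\omega=d\phi$ for some globally defined smooth function $\phi$. This exhibits $\omega$ as lying in the image of the first operator $d$ and hence proves exactness of the complex at the middle term, which is precisely the assertion of Theorem~\ref{desired}. The whole argument thus rests on two well-known topological facts about ${\mathbb{CP}}_n$, namely $[J]\neq 0$ in $H^2$ and $H^1=0$, and these are exactly the global inputs that replace the local primitive $\alpha$ used in the Proposition.
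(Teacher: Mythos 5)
Your argument is correct and is essentially the paper's own proof: the paper also reduces to the local computation showing $\theta$ is constant and then invokes the nonexistence of a global primitive for $J$ (i.e.\ $[J]\neq 0$ in $H^2({\mathbb{CP}}_n,{\mathbb{R}})$) together with $H^1({\mathbb{CP}}_n,{\mathbb{R}})=0$. You have merely made explicit the cohomological step that the paper leaves as ``the same line of argument shows that globally there is no cohomology.''
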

For the cases $\ell\geq 2$, it will be useful to extend (\ref{beginRScomplex}) 
as follows. Let us firstly suppose that $n\geq 3$. Then there is a naturally 
defined complex
\begin{equation}\label{nextRScomplex}
\Lambda^0\xrightarrow{\,d\,}\Lambda^1\xrightarrow{\,d_\perp\,}\Lambda_\perp^2
\xrightarrow{\,d_\perp\,}\Lambda_\perp^3
\end{equation}
where $\Lambda_\perp^3$ denotes the bundle of $3$-forms trace-free with 
respect to $J_{ab}$ and $d_\perp:\Lambda_\perp^2\to\Lambda_\perp^3$ is defined 
as the composition
\[\Lambda_\perp^2\hookrightarrow\Lambda^2\xrightarrow{\,d\,}\Lambda^3=
\Lambda_\perp^3\oplus\Lambda^1\to\Lambda_\perp^3.\]
In case $n=2$ notice that $\Lambda_\perp^3=0$. Otherwise, we have found the 
integrability conditions for the range of 
$d_\perp:\Lambda^1\to\Lambda_\perp^2$. 
\begin{proposition}\label{exact}
On ${\mathbb{CP}}_n$ for $n\geq 3$, the complex
\[\Lambda^1\xrightarrow{\,d_\perp\,}\Lambda_\perp^2
\xrightarrow{\,d_\perp\,}\Lambda_\perp^3\]
is exact on the level of sheaves.\end{proposition}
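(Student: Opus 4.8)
The plan is to follow the template of the previous proposition, reducing everything to the ordinary Poincar\'e lemma together with a single piece of symplectic linear algebra: the fact that wedging with the K\"ahler form, $\wedge J\colon\Lambda^2\to\Lambda^4$, is injective precisely when $n\geq 3$. First I would unwind the hypothesis. Since $\Lambda^3=\Lambda_\perp^3\oplus\Lambda^1$, where the second summand is the image of $\alpha\mapsto\alpha\wedge J$, a local section $\beta$ of $\Lambda_\perp^2$ satisfies $d_\perp\beta=0$ exactly when the $\Lambda_\perp^3$-component of $d\beta$ vanishes, that is, when $d\beta=\gamma\wedge J$ for some locally defined $1$-form~$\gamma$.

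Next I would apply $d$. Because $J$ is closed, $0=d(d\beta)=d\gamma\wedge J$. This is the crux of the argument and the only place where $n\geq 3$ enters: by the Lefschetz decomposition on the $2n$-dimensional symplectic vector space (the cotangent space at a point), $\wedge J\colon\Lambda^k\to\Lambda^{k+2}$ is injective for $k\leq n-1$, so with $k=2$ injectivity holds exactly for $n\geq 3$. (At $n=2$ this map is no longer injective, consistent with the observation that $\Lambda_\perp^3=0$ there.) Hence $d\gamma=0$, and the Poincar\'e lemma gives $\gamma=df$ locally for some function~$f$. Then, using $dJ=0$ once more, $d\beta=df\wedge J=d(fJ)$, so $\beta-fJ$ is closed and a second application of the Poincar\'e lemma yields $\beta-fJ=d\alpha$ for some local $1$-form~$\alpha$.

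Finally I would take trace-free parts with respect to~$J$. Since $J$ itself satisfies $(J)_\perp=0$ and $\beta$ is already trace-free, projecting the identity $\beta=fJ+d\alpha$ onto $\Lambda_\perp^2$ gives $\beta=(d\alpha)_\perp=d_\perp\alpha$, which is exactly the asserted exactness. The hard part, such as it is, is the symplectic injectivity of $\wedge J$ on $2$-forms; everything surrounding it is the Poincar\'e lemma and the closedness of~$J$. It is worth noting that this is precisely the step that fails at $n=2$, which is why the proposition is stated for $n\geq 3$.
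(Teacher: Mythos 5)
Your proposal is correct and follows essentially the same route as the paper: unwind $d_\perp\xi=0$ as $d\xi=\mu\wedge J$, differentiate and use closedness of $J$ together with injectivity of $\wedge J\colon\Lambda^2\to\Lambda^4$ for $n\geq 3$ to get $d\mu=0$, then apply the Poincar\'e lemma twice and project back to the trace-free part. The only cosmetic difference is that you name the key linear-algebra fact as part of the Lefschetz decomposition, where the paper just invokes non-degeneracy of $J$ and ``linear algebra.''
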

\begin{proof}
Suppose $\xi$ is a locally defined $J$-trace-free $2$-form with $d_\perp\xi=0$.
This means that $d\xi=\mu\wedge J$ for some $1$-form~$\mu$. Applying $d$ gives
\[0=d\mu\wedge J-\mu\wedge dJ=d\mu\wedge J\]
because $J$ is closed. But since $J$ is non-degenerate and $n\geq 3$, it
follows by linear algebra that $d\mu=0$. Locally, therefore, we may find a 
smooth function $\phi$ with $d\phi=\mu$. Hence $d(\xi-\phi J)=0$ and locally 
we may find a smooth $1$-form $\omega$ such that $\xi-\phi J=d\omega$. Since 
$\xi$ is $J$-trace-free, we conclude that $\xi=d_\perp\omega$.
\end{proof}
When $n=2$ there is a replacement for (\ref{nextRScomplex}) due to M.~Rumin and
N.~Seshadri~\cite{rs} and defined (on any $4$-dimensional symplectic manifold)
as follows. Suppose $\xi$ is a smooth $2$-form and consider $d\xi$.
Since~$n=2$, there is an isomorphism
\[\Lambda^1\xrightarrow{\,\underbar{\enskip}\wedge J\,}\Lambda^3\]
so we may write $d\xi=\mu\wedge J$ for a uniquely defined $1$-form~$\mu$. 
Applying $d$ implies $d\mu\wedge J=0$ whence $d\mu$ is $J$-trace-free. Let us 
write $d_\perp^{(2)}:\Lambda^2\to\Lambda_\perp^2$ for the resulting 
differential operator. Specifically, 
\begin{equation}\label{defofdperp2}
d_\perp^{(2)}\xi=d\mu,\quad\mbox{where }\mu\wedge J=d\xi.\end{equation}
Notice that if $\xi=\theta J$, then $\mu=d\theta$ and so $d_\perp^{(2)}\xi=0$. 
Thus, we obtain a complex of differential operators on~${\mathbb{CP}}_2$
\[\Lambda^0\xrightarrow{\,d\,}
\Lambda^1\xrightarrow{\,d_\perp\,}\Lambda_\perp^2
\xrightarrow{\,d_\perp^{(2)}\,}\Lambda_\perp^2,\]
which acts as a replacement for (\ref{nextRScomplex}), especially in view of 
the following replacement for Proposition~\ref{exact}.
\begin{proposition}
On ${\mathbb{CP}}_2$, the complex
\[\Lambda^1\xrightarrow{\,d_\perp\,}\Lambda_\perp^2
\xrightarrow{\,d_\perp^{(2)}\,}\Lambda_\perp^2\]
is exact on the level of sheaves.\end{proposition}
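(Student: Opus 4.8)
The plan is to follow the proof of Proposition~\ref{exact} almost verbatim, the crucial observation being that in real dimension four the defining recipe (\ref{defofdperp2}) for $d_\perp^{(2)}$ already packages precisely the closedness condition one needs, so that the linear-algebra step used for $n\geq 3$ is replaced here directly by the hypothesis. Concretely, I would start with a locally defined $J$-trace-free $2$-form $\xi$ satisfying $d_\perp^{(2)}\xi=0$. Since $n=2$, the map $\Lambda^1\to\Lambda^3$, $\mu\mapsto\mu\wedge J$, is an isomorphism, so $d\xi=\mu\wedge J$ for a unique smooth $1$-form $\mu$, and by definition $d_\perp^{(2)}\xi=d\mu$. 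The hypothesis therefore reads simply $d\mu=0$.

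Once $d\mu=0$ is in hand, the rest is Poincar\'e-lemma bookkeeping identical to the case $n\geq 3$. Locally $\mu=d\phi$ for a smooth function $\phi$, so $d\xi=d\phi\wedge J=d(\phi J)$ because $J$ is closed, whence $d(\xi-\phi J)=0$ and locally $\xi-\phi J=d\omega$ for some $1$-form $\omega$. Taking $J$-trace-free parts, and using that $\xi$ is already trace-free while $\phi J$ is pure trace, I obtain $\xi=(d\omega)_\perp=d_\perp\omega$, which is exactly what local exactness at the middle term requires. The reverse inclusion, that the image of $d_\perp$ lies in the kernel of $d_\perp^{(2)}$, is the complex property already asserted before the proposition; it also follows by a one-line check: writing $d\omega=d_\perp\omega+cJ$ for a function $c$ gives $d(d_\perp\omega)=-dc\wedge J$, so the associated $1$-form is $\mu=-dc$ and $d_\perp^{(2)}(d_\perp\omega)=d\mu=0$.

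I do not expect a genuine obstacle here, since the statement is purely local and the argument transcribes Proposition~\ref{exact}. The one conceptual point worth flagging is the contrast with the $n\geq 3$ case: there one writes $d\xi=\mu\wedge J$ from $d_\perp\xi=0$ and must then apply $d$ and invoke the non-degeneracy of $J$ to force $d\mu=0$, whereas for $n=2$ that conclusion is supplied immediately by $d_\perp^{(2)}\xi=0$. The only step that wants care is the final absorption: one must notice that subtracting the pure-trace term $\phi J$ is invisible after projection onto $\Lambda_\perp^2$, so that a primitive $\omega$ of $\xi-\phi J$ serves, via its trace-free derivative $d_\perp\omega$, as a $d_\perp$-primitive of $\xi$ itself.
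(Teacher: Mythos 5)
Your argument is correct and is essentially the paper's own proof: from $d_\perp^{(2)}\xi=0$ one gets $d\mu=0$, hence locally $d\xi=d\phi\wedge J$, so $d(\xi-\phi J)=0$ and a local primitive $\omega$ of $\xi-\phi J$ satisfies $\xi=d_\perp\omega$ because $\xi$ is $J$-trace-free. The extra check that $d_\perp^{(2)}\circ d_\perp=0$ is harmless but not needed, as the paper has already established the complex property.
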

\begin{proof} If $d_\perp^{(2)}\xi=0$ in (\ref{defofdperp2}), then locally we 
may write $d\xi=d\phi\wedge J$ for some smooth function~$\phi$. In this case
$d(\xi-\phi J)=0$ and locally we may find a smooth $1$-form $\omega$ such 
that $\xi=d\omega+\phi J$. If $\xi$ is also $J$-trace-free, then it is 
immediate that $\xi=d_\perp\omega$.
\end{proof}

\subsection{The case $\ell=2$}\label{ellequalstwo}
Let ${\mathbb{U}}$ denote the bundle $\Lambda^0\oplus\Lambda^1\oplus\Lambda^0$ 
on ${\mathbb{CP}}_n$ equipped with the connection
\begin{equation}\label{newtractors}
{\mathbb{U}}=\begin{array}c\Lambda^0\\ \oplus\\ \Lambda^1\\
\oplus\\
 \Lambda^0\end{array}\ni
\left[\begin{array}c\sigma\\ \mu_b\\ \rho\end{array}\right]
\stackrel{\nabla_a}{\longmapsto}
\left[\begin{array}c\nabla_a\sigma-\mu_a\\ 
\nabla_a\mu_b+g_{ab}\sigma+J_{ab}\rho\\
\nabla_a\rho-J_a{}^c\mu_c
\end{array}\right]\in\Lambda^1\otimes{\mathbb{U}}.\end{equation}
We compute that
\[\nabla_a\nabla_b
\left[\begin{array}c\sigma\\ \mu_c\\ \rho\end{array}\right]=
\left[\begin{array}c
J_{ab}\rho+\ldots\\ 
\nabla_a\nabla_b\mu_c-g_{ac}\mu_b-J_{ac}J_b{}^d\mu_d+\ldots\\
-J_{ab}\sigma+\ldots
\end{array}\right],\]
where the ellipses $\ldots$ denote tensors symmetric in the $ab$ indices. 
{From} (\ref{curvatures}) we see that
\[(\nabla_a\nabla_b-\nabla_b\nabla_a)\mu_c=
g_{ac}\mu_b-g_{bc}\mu_a+J_{ac}J_b{}^d\mu_d-J_{bc}J_a{}^d\mu_d
+2J_{ab}J_c{}^d\mu_d\]
and hence that the curvature of the connection on ${\mathbb{U}}$ is given by
\begin{equation}\label{curvatureofU}(\nabla_a\nabla_b-\nabla_b\nabla_a)
\left[\begin{array}c\sigma\\ \mu_c\\ \rho\end{array}\right]=
2J_{ab}\left[\begin{array}c\rho\\ J_c{}^d\mu_d\\ -\sigma\end{array}\right].
\end{equation}
In other words, the curvature of this connection on ${\mathbb{U}}$ has the form
\begin{equation}\label{symplecticEinstein}
(\nabla_a\nabla_b-\nabla_b\nabla_a)\Sigma=2J_{ab}\Phi\Sigma,\end{equation}
where $\Phi$ is some endomorphism of~${\mathbb{U}}$. 

It is easily verified that the skew form on ${\mathbb{U}}$ defined by 
\begin{equation}\label{symplecticreduction}
\langle(\sigma,\mu_a,\rho),(\tilde\sigma,\tilde\mu_b,\tilde\rho)\rangle
=\sigma\tilde\rho+J^{ab}\mu_a\tilde\mu_b-\rho\tilde\sigma\end{equation}
is compatible with $\nabla_a$ in the sense that  
$\nabla_a\langle\Sigma,\tilde\Sigma\rangle=
\langle\nabla_a\Sigma,\tilde\Sigma\rangle+
\langle\Sigma,\nabla_a\tilde\Sigma\rangle$. 
Hence, the structure group for ${\mathbb{U}}$ can be reduced to 
${\mathrm{Sp}}(2(n+1),{\mathbb{R}})$ and, just as we did for the connection
on ${\mathbb{T}}$ in~\S\ref{necessary}, we may now consider the connections on
vector bundles induced from~${\mathbb{U}}$ by the irreducible representations 
of this structure group. Parallel to $\Lambda^2{\mathbb{T}}$ in 
\S\ref{necessary}, we should consider $\Lambda_\perp^2{\mathbb{U}}$ where 
$\perp$ denotes the trace-free part with respect
to~(\ref{symplecticreduction}). Its connection is easily computed
\begin{equation}\label{skewtracefreetractors}
\begin{array}l\Lambda_\perp^2{\mathbb{U}}=
\!\begin{array}c\Lambda^1\\ \oplus\\ \Lambda^2\\ \oplus\\ \Lambda^1\end{array}
\!\ni\left[\begin{array}c\sigma_b\\ \mu_{bc}\\ \rho_b\end{array}\right]
\begin{picture}(0,0)\put(0,0){\line(1,0){20}}
\put(0,-2){\line(0,1){4}}
\put(20,0){\vector(0,-1){20}}
\put(10,6){\makebox(0,0){\small$\nabla_a$}}\end{picture}\\
\left[\begin{array}c\nabla_a\sigma_b-\mu_{ab}\\ 
\nabla_a\mu_{bc}+g_{ab}\sigma_c-g_{ac}\sigma_b+J_{ab}\rho_c
-J_{ac}\rho_b-J_{bc}\rho_a+J_{bc}J_a{}^d\sigma_d\\
\nabla_a\rho_b+J_a{}^d\mu_{bd}
\end{array}\right]\end{array}\end{equation}
and we immediately notice the similarity with~(\ref{skewtractors}). The 
curvature of this connection automatically has the form 
(\ref{symplecticEinstein}) with $\Phi$ replaced by the induced endomorphism 
of~$\Lambda_\perp^2{\mathbb{U}}$. Alternatively, it may be verified by 
composition with the induced operator
$\nabla:\Lambda^1\otimes\Lambda_\perp^2{\mathbb{U}}\to
\Lambda^2\otimes\Lambda_\perp^2{\mathbb{U}}$ given by
\begin{equation}\label{coupledskewtracefreetractors}
\raisebox{-20pt}{\makebox[0pt]{$\begin{array}l
\hspace*{80pt}
\left[\begin{array}c\sigma_{bc}\\ \mu_{bcd}\\ \rho_{bc}\end{array}\right]
\begin{picture}(0,0)\put(0,0){\line(1,0){20}}
\put(0,-2){\line(0,1){4}}
\put(20,0){\vector(0,-1){20}}
\end{picture}\\[20pt]
\mbox{\small{}\!$\left[\!\!\begin{array}c
\nabla_{[a}\sigma_{b]c}+\mu_{[ab]c}\\ 
\nabla_{[a}\mu_{b]cd}+g_{c[a}\sigma_{b]d}-g_{d[a}\sigma_{b]c}
-J_{c[a}\rho_{b]d}+J_{d[a}\rho_{b]c}+J_{cd}\rho_{[ab]}
+J_{cd}J_{[a}{}^e\sigma_{b]e}\\ 
\nabla_{[a}\rho_{b]c}+J_{[a}{}^e\mu_{b]ce}
\end{array}\!\!\right]$\!}\end{array}$}}\end{equation}
that
\begin{equation}\label{curvatureskewtracefreetractors}
(\nabla_a\nabla_b-\nabla_b\nabla_a)
\left[\begin{array}c\sigma_c\\ \mu_{cd}\\ \rho_c\end{array}\right]=
2J_{ab}\left[\begin{array}c\rho_c+J_c{}^e\sigma_e\\ 
J_c{}^e\mu_{ed}+J_d{}^e\mu_{ce}\\ -\sigma_c+J_c{}^e\rho_e\end{array}\right].
\end{equation}

\begin{theorem}\label{splittingCPn}
Suppose $\omega_{ab}$ is a symmetric tensor on~${\mathbb{CP}}_n$. The following 
are (locally or globally) equivalent.
\begin{enumerate}\setlength{\itemsep}{3pt}
\item $\left[\begin{array}c\omega_{bc}\\ 
\nabla_c\omega_{db}-\nabla_d\omega_{cb}\\
{\mathcal{L}}_{bc}(\omega)
\end{array}\right]
\in\Gamma({\mathbb{CP}}_n,\Lambda^1\otimes\Lambda_\perp^2{\mathbb{U}})$
is in the range of the induced  connection 
$\nabla_b:\Gamma({\mathbb{CP}}_n,\Lambda_\perp^2{\mathbb{U}})\longrightarrow
\Gamma({\mathbb{CP}}_n,\Lambda^1\otimes\Lambda_\perp^2{\mathbb{U}})$, where 
$\bigodot^2\!\Lambda^1\in\omega_{bc}\mapsto{\mathcal{L}}_{bc}(\omega)\in
\Lambda^1\otimes\Lambda^1$ is some explicit linear differential operator (to be 
determined in the proof).
\item $\omega_{ab}=\nabla_{(a}\phi_{b)}$ for some 
$\phi_a\in\Gamma({\mathbb{CP}}_n,\Lambda^1)$.
\item $\left[\begin{array}c\omega_{bc}\\ 
\mu_{bcd}\\ \rho_{bc}\end{array}\right]
\in\Gamma({\mathbb{CP}}_n,\Lambda^1\otimes\Lambda_\perp^2{\mathbb{U}})$,
for some $\mu_{bcd}\in\Gamma({\mathbb{CP}}_n,\Lambda^1\otimes\Lambda^2)$ and
$\rho_{bc}\in\Gamma({\mathbb{CP}}_n,\Lambda^1\otimes\Lambda^1)$,
is in the range of the connection 
$\nabla_b:\Gamma({\mathbb{CP}}_n,\Lambda_\perp^2{\mathbb{U}})\to
\Gamma({\mathbb{CP}}_n,\Lambda^1\otimes\Lambda_\perp^2{\mathbb{U}})$.
\end{enumerate}
\end{theorem}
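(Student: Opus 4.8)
The plan is to follow, step for step, the argument given for the case $\ell=2$ on ${\mathbb{RP}}_n$ in \S\ref{necessary}, with the flat tractor bundle $\Lambda^2{\mathbb{T}}$ there replaced by the bundle $\Lambda_\perp^2{\mathbb{U}}$ and its connection (\ref{skewtracefreetractors}). The implications (i)$\Rightarrow$(ii) and (iii)$\Rightarrow$(ii) require no curvature at all: reading the top slot of (\ref{skewtracefreetractors}), any section in the range of $\nabla$ has top component $\nabla_a\sigma_b-\mu_{ab}$ for some $\sigma_b$ and some skew $\mu_{ab}$, so if this component equals the symmetric tensor $\omega_{ab}$ then necessarily $\omega_{ab}=\nabla_{(a}\sigma_{b)}$ and we may take $\phi_b=\sigma_b$. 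Conversely, for (ii)$\Rightarrow$(iii) I would simply exhibit a section of $\Lambda_\perp^2{\mathbb{U}}$ realising $\omega_{ab}=\nabla_{(a}\phi_{b)}$: take its top slot to be $\phi_b$, its middle slot $\nabla_{[b}\phi_{c]}$, and choose the bottom $\Lambda^1$-component so that the symplectic trace with respect to (\ref{symplecticreduction}) vanishes. Its covariant derivative then has top slot $\omega_{ab}$, which is precisely (iii).

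The substance of the theorem is therefore the refinement (iii)$\Rightarrow$(i), in which the generic completion of $\omega$ is replaced by the explicit one, with middle slot the curl $\nabla_c\omega_{db}-\nabla_d\omega_{cb}$ and bottom slot a differential operator ${\mathcal{L}}_{bc}(\omega)$ in $\omega$ alone. On ${\mathbb{RP}}_n$ the corresponding step used flatness: if $\Sigma$ lay in the range then $\nabla\Sigma=0$, and the top slot of this identity solved algebraically for the bottom component, producing exactly the curl. Here the connection on $\Lambda_\perp^2{\mathbb{U}}$ is no longer flat; by (\ref{symplecticEinstein}), or explicitly by (\ref{curvatureskewtracefreetractors}), its curvature is a fixed multiple of the K\"ahler form $J_{ab}$ times a universal endomorphism of $\Lambda_\perp^2{\mathbb{U}}$. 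Consequently, applying $\nabla$ to a section $\Sigma=\nabla\Xi$ in the range yields not zero but the explicit $J_{ab}$-valued expression $\tfrac12(\nabla_a\nabla_b-\nabla_b\nabla_a)\Xi$. My plan is to expand this in slots via (\ref{coupledskewtracefreetractors}) and equate with (\ref{curvatureskewtracefreetractors}): the top-slot equation should recover the middle component of $\Sigma$ as the curl of $\omega$ modulo multiples of $J_{ab}$, after the same index manipulation $\mu_{abc}=3\mu_{[abc]}-2\mu_{[bc]a}$ used on ${\mathbb{RP}}_n$, while the remaining two slot-equations then pin down ${\mathcal{L}}_{bc}(\omega)$.

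The hardest part will be the bookkeeping of the $J_{ab}$-proportional terms and the verification that the operator ${\mathcal{L}}_{bc}(\omega)$ so produced depends only on $\omega$ (and not on the potential $\phi$ nor on the residual freedom in $\Xi$). The leverage is precisely the symplectic-Einstein shape (\ref{symplecticEinstein}) of the curvature: because every obstruction to exactness is concentrated in the $J_{ab}$-trace part, passing to the trace-free bundle $\Lambda_\perp^2{\mathbb{U}}$ rather than $\Lambda^2{\mathbb{U}}$ is exactly what allows these terms to be absorbed into the extra bottom $\Lambda^1$-slot present in $\Lambda_\perp^2{\mathbb{U}}$ but not in $\Lambda^2{\mathbb{T}}$. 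I expect the computation to run parallel to the ${\mathbb{RP}}_n$ case throughout, the one genuine new input being the consistent tracking of these $J$-terms across the three slots of $\Lambda_\perp^2{\mathbb{U}}$ and the confirmation that they cancel once the symplectic-trace-free condition is imposed.
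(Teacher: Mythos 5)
Your overall strategy coincides with the paper's: you get (i)$\Rightarrow$(ii)$\Rightarrow$(iii) by reading the top slot of (\ref{skewtracefreetractors}), and for (iii)$\Rightarrow$(i) you exploit the symplectic--Einstein form (\ref{symplecticEinstein}) of the curvature of $\Lambda_\perp^2{\mathbb{U}}$, so that the first-slot equation $\nabla_{[a}\omega_{b]c}+\mu_{[ab]c}=J_{ab}\rho_c$ together with $\mu_{bcd}=3\mu_{[bcd]}-2\mu_{[cd]b}$ recovers the middle slot as the curl of $\omega$ up to $J$-terms, which are then absorbed by adding $\nabla$ of a section concentrated in the bottom slot. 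All of that matches the paper's proof.

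The gap is your one-sentence claim that ``the remaining two slot-equations then pin down ${\mathcal{L}}_{bc}(\omega)$.'' That is precisely the nontrivial content of the theorem, and it is not a matter of tracking $J$-proportional terms across slots. What you actually have at that stage is this: the second slot of $\nabla\Sigma$ is a tensor $T_{abcd}$ (built from $\nabla\mu$, the metric, $J$, and the unknown $\rho_{bc}$) satisfying $T_{abcd}=T_{[ab][cd]}$, and the requirement is that it equal $J_{ab}\tau_{cd}$ for some skew $\tau_{cd}$. To conclude that this requirement determines $\rho_{bc}$ uniquely as a differential expression in $\omega$ alone, you need a purely algebraic existence-and-uniqueness statement: every such $T_{abcd}$ obeying the constraint $T_{[abc]d}=J_{[ab}\psi_{c]d}$ (a constraint you must first verify for your $T$, which the paper does by one further application of the curvature identity) decomposes \emph{uniquely} as $X_{abcd}+J_{c[a}\rho_{b]d}-J_{d[a}\rho_{b]c}-J_{cd}\rho_{[ab]}+J_{ab}\tau_{cd}$ with $X_{abcd}$ totally trace-free and satisfying the Bianchi symmetry. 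This is the paper's Lemma~\ref{purealgebra}; it is proved by exhibiting a short complex of ${\mathrm{Sp}}(2n,{\mathbb{R}})$-modules and counting dimensions, and is later recast in \S\ref{generalell} as a computation of $H^2$ of a Heisenberg algebra. Without it the map $\rho_{bc}\mapsto J$-terms is not known to be injective modulo $J_{ab}\tau_{cd}$, nor is the decomposition known to exist, so ${\mathcal{L}}_{bc}$ is not yet a well-defined operator and (iii)$\Rightarrow$(i) is not established. You have correctly located where the difficulty sits, but the idea that resolves it is missing from your proposal.
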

\begin{proof}
It is clear from (\ref{skewtracefreetractors}) that
(i)$\Rightarrow$(ii)$\Rightarrow$(iii). It remains to show 
(iii)$\Rightarrow$(i). To see this, recall that the curvature of the 
connection on $\Lambda_\perp^2{\mathbb{U}}$ has the form 
(\ref{symplecticEinstein}) and so if (iii) holds, then we read off from the 
first row of (\ref{coupledskewtracefreetractors}) that 
\[\nabla_{[a}\omega_{b]c}+\mu_{[ab]c}=J_{ab}\rho_c\]
for some~$\rho_c$.
{From} this, bearing in mind that $\mu_{bcd}=\mu_{[bcd]}$, it follows that 
\[\mu_{bcd}=3\mu_{[bcd]}-2\mu_{[cd]b}=
\nabla_{c}\omega_{db}-\nabla_{d}\omega_{cb}
+J_{bc}\rho_d-J_{cd}\rho_b-J_{bd}\rho_c\]
and from (\ref{skewtracefreetractors}) we see that 
\[\left[\begin{array}c\omega_{bc}\\ 
\mu_{bcd}\\ \rho_{bc}\end{array}\right]=\left[\begin{array}c\omega_{bc}\\ 
\nabla_{c}\omega_{db}-\nabla_{d}\omega_{cb}\\ 
\ast\end{array}\right]
+\nabla_b\left[\begin{array}c 0\\ 0\\ \rho_{c}\end{array}\right]\]
for some $\rho_c$. As the second term on the right hand side is already of the 
required form, it follows that we may take 
$\mu_{bcd}=\nabla_{c}\omega_{db}-\nabla_{d}\omega_{cb}$ without loss of 
generality and it remains to consider~$\rho_{bc}$. In fact, we claim that 
$\rho_{bc}$ now is uniquely determined by equating the 
second row of (\ref{coupledskewtracefreetractors}) to $J_{ab}\tau_{cd}$ for 
some $\tau_{cd}=\tau_{[cd]}$, as must be the case 
by~(\ref{symplecticEinstein}). To see this, we need the following purely 
algebraic result.
\begin{lemma}\label{purealgebra}
Suppose $T_{abcd}$ is a tensor with the following symmetries
\begin{itemize} 
\item $T_{abcd}=T_{[ab][cd]}$,
\item $T_{[abc]d}=J_{[ab}\psi_{c]d}$ for some tensor $\psi_{cd}$.
\end{itemize}
Then there are unique tensors 
\begin{itemize}
\item $\rho_{ab}$,
\item $\tau_{ab}=\tau_{[ab]}$,
\item $X_{abcd}=X_{[ab][cd]}$ with $X_{[abc]d}=0$ and $J^{ab}X_{abcd}=0$,
\end{itemize}
such that 
\begin{equation}\label{whatwewant}
T_{abcd}=X_{abcd}+J_{c[a}\rho_{b]d}-J_{d[a}\rho_{b]c}-J_{cd}\rho_{[ab]}
+J_{ab}\tau_{cd}.\end{equation}
\end{lemma}
\begin{proof} Let us consider a complex consisting of (quotients of) spaces of 
tensors for ${\mathrm{Sp}}(2n,{\mathbb{R}})$ and homomorphisms between them
\begin{equation}\label{trickycomplex}0\to A\to B\to C\to 0\end{equation}
defined by setting
\begin{itemize}
\item $A=\{\rho_{bc}\mbox{ with no particular symmetries}\}$
\item $B=\{T_{abcd}\mbox{ s.t.\ }T_{abcd}=T_{[ab][cd]}\}/
\{T_{abcd}=J_{ab}\tau_{cd}\mbox{ for }\tau_{cd}=\tau_{[cd]}\}$
\item $C=\{S_{abcd}\mbox{ s.t.\ }S_{abcd}=S_{[abc]d}\}/
\{S_{abcd}=J_{[ab}\psi_{c]d}\mbox{ for some }\psi_{cd}\}$
\end{itemize}
and taking 
\[A\ni\rho_{bc}\mapsto 
J_{c[a}\rho_{b]d}-J_{d[a}\rho_{b]c}-J_{cd}\rho_{[ab]}\in B\ni 
T_{abcd}\mapsto T_{[abc]d}\in C.\]
It is easily verified that 
\begin{itemize}
\item this is, indeed, a complex,
\item $A\to B$ is injective,
\item $B\to C$ is surjective,
\item if we set 
$H=\{[X_{abcd}]\in B\mbox{ s.t.\ }X_{[abc]d}=0\mbox{ and }J^{ab}X_{abcd}=0\}$,
then $H$ is transverse to the image of $A\hookrightarrow B$ and is mapped to 
zero under $B\to C$.
\end{itemize}
Lemma~\ref{purealgebra} is precisely the statement that $H$ represents the
cohomology of the complex~(\ref{trickycomplex}). This is most easily verified 
by computing dimensions
\begin{itemize}
\item $\dim A
=4n^2$,
\item $\dim B
=(n-1)n(2n-1)(2n+1)$,
\item $\displaystyle\dim C
=4(n-2)n^2(2n+1)/3,$
\item $\displaystyle\dim H 
=(n-1)n(2n-1)(2n+3)/3,$
\end{itemize}
(for $n\geq 2$) and the result follows.
\end{proof}
\noindent To continue the proof of Theorem~\ref{splittingCPn}, we claim that
Lemma~\ref{purealgebra} applies to
\begin{equation}\label{thisisT}T_{abcd}=
\nabla_{[a}\mu_{b]cd}+g_{c[a}\omega_{b]d}-g_{d[a}\omega_{b]c}
+J_{cd}J_{[a}{}^e\omega_{b]e},\end{equation}
where $\mu_{bcd}=\nabla_c\omega_{bd}-\nabla_d\omega_{bc}$. This is because
\[T_{[abc]d}=2J_{[ab}\omega_{c]e}J_d{}^e,\]
as can be verified by direct computation from (\ref{curvatures}) or, more 
simply, by noticing that 
\[\Lambda^1\otimes\Lambda_\perp^2{\mathbb{U}}\ni
\left[\!\begin{array}c\omega_{bc}\\ 
\nabla_{c}\omega_{db}-\nabla_{d}\omega_{cb}\\ 
0\end{array}\!\right]\stackrel{\nabla}{\mapsto}
\left[\!\begin{array}c0\\ 
T_{abcd}\\ 
\ast\end{array}\!\right]\stackrel{\nabla}{\mapsto}
\left[\!\begin{array}cT_{[abc]d}\\ 
\ast\\ 
\ast\end{array}\!\right]\in\Lambda^3\otimes\Lambda_\perp^2{\mathbb{U}}\]
and that the curvature of $\nabla$ on $\Lambda_\perp^2{\mathbb{U}}$ is given
by~(\ref{curvatureskewtracefreetractors}) (but, in fact, we only need to know
that the curvature has the form (\ref{symplecticEinstein}) in order to see that
$T_{[abc]d}=J_{[ab}\psi_{c]d}$ for some $\psi_{cd}$ and be in a position to
apply Lemma~\ref{purealgebra}). We conclude from Lemma~\ref{purealgebra} that
$T_{abcd}$ in (\ref{thisisT}) uniquely determines tensor fields $\rho_{ab}$,
$\tau_{ab}$, and $X_{abcd}$ satisfying the symmetries specified in
Lemma~\ref{purealgebra} and such that (\ref{whatwewant}) holds. Of course, we 
could determine $\rho_{ab}$ explicitly from its characterising properties and 
especially (\ref{whatwewant}) by tracing over various pairs of indices 
using the inverse symplectic form~$J^{ab}$. The result is extraordinarily 
complicated but has the form
\[\rho_{ab}=
-\frac{1}{2(n+1)}\left(S_{ab}-\frac{1}{2n+1}J^{cd}S_{cd}J_{ab}\right)
+\mbox{lower order terms},\]
where 
$S_{ab}=J^{cd}(\nabla_a\nabla_c\omega_{bd}-\nabla_b\nabla_c\omega_{ad})$. The 
mapping $\omega_{bc}\mapsto\rho_{bc}$ defines the differential operator
${\mathcal{L}}_{bc}$ in the statement of Theorem~\ref{splittingCPn} and from 
(\ref{coupledskewtracefreetractors}) we have arranged that
\begin{equation}\label{whatwevearranged}
\raisebox{-16pt}{\makebox[0pt]{$\Lambda^1\otimes\Lambda_\perp^2{\mathbb{U}}\ni
\left[\begin{array}c\omega_{bc}\\ 
\nabla_{c}\omega_{db}-\nabla_{d}\omega_{cb}\\ 
{\mathcal{L}}_{bc}(\omega)\end{array}\right]\stackrel{\nabla}{\longmapsto}
\left[\begin{array}c0\\ 
X_{abcd}+J_{ab}\tau_{cd}\\ 
\ast\end{array}\right]\in\Lambda^2\otimes\Lambda_\perp^2{\mathbb{U}},$}}
\end{equation}
where $X_{abcd}$ and $\tau_{cd}$ are determined by~(\ref{whatwewant}).
Recall that this conclusion was derived under assumption (iii) in 
Theorem~\ref{splittingCPn} with the additional constraint, without losing
generality, that $\mu_{bcd}=\nabla_{c}\omega_{db}-\nabla_{d}\omega_{cb}$. The 
conclusion that~$\rho_{bc}={\mathcal{L}}_{bc}(\omega)$, for the differential 
operator ${\mathcal{L}}_{bc}$ derived above, was forced by 
these assumptions. This is enough to complete the proof of 
Theorem~\ref{splittingCPn}.
\end{proof}
There are, however, some further conclusions that can be derived from this 
proof and are worth recording here. Firstly, if (iii) holds, then it is 
immediate from (\ref{symplecticEinstein}) and (\ref{whatwevearranged}), that 
$X_{abcd}=0$. We can compute $X_{abcd}$ from (\ref{whatwewant}) and 
(\ref{thisisT}) but, in fact, we have essentially done this computation already 
in \S\ref{necessary} when we derived (\ref{rawobstruction}) leading 
to~(\ref{refinedobstruction}). The point is that if one simply ignores all 
terms involving $J$ in the formula (\ref{skewtracefreetractors}) for the
connection on $\Lambda_\perp^2{\mathbb{U}}$, then one obtains
\[\left[\begin{array}c\sigma_b\\ \mu_{bc}\\ \rho_b\end{array}\right]
\longmapsto
\left[\begin{array}c\nabla_a\sigma_b-\mu_{ab}\\ 
\nabla_a\mu_{bc}+g_{ab}\sigma_c-g_{ac}\sigma_b\\
\nabla_a\rho_b
\end{array}\right],\]
which, as far as the first two rows are concerned, coincides 
with~(\ref{skewtractors}). But we are planning to remove all $J$-traces in 
defining $X_{abcd}$ via~(\ref{whatwewant}). Bearing in mind that the Riemann 
curvature tensors on complex and real projective space differ only by terms 
involving~$J$, it follows from the derivation of (\ref{refinedobstruction}) 
that 
\begin{equation}\label{X}X_{abcd}=
2\times\pi_\perp(\nabla_{(a}\nabla_{c)}\omega_{bd}+g_{ac}\omega_{bd}),
\end{equation}
where recall that the subscript $\perp$ means to remove the $J$-traces. Of 
course, this confirms Theorem~\ref{necessaryonCPn} in case~$\ell=2$. Another 
key observation from (\ref{whatwevearranged}) is as follows.
\begin{theorem}\label{observation}
Suppose $\omega_{ab}$ is a symmetric tensor on ${\mathbb{CP}}_n$ and that
\begin{equation}\label{Xvanishes}
\pi_\perp(\nabla_{(a}\nabla_{c)}\omega_{bd}+g_{ac}\omega_{bd})=0.
\end{equation}
Then
\begin{equation}\label{forced}\left[\begin{array}c\omega_{bc}\\ 
\nabla_{c}\omega_{db}-\nabla_{d}\omega_{cb}\\ 
{\mathcal{L}}_{bc}(\omega)\end{array}\right]\stackrel{\nabla}{\longmapsto}
\left[\begin{array}c0\\ 
J_{ab}\tau_{cd}\\ 
J_{ab}\theta_c
\end{array}\right]
\in\Gamma({\mathbb{CP}}_n,\Lambda^2\otimes\Lambda_\perp^2{\mathbb{U}}),
\end{equation}
for some $\tau_{cd}\in\Gamma({\mathbb{CP}}_n,\Lambda^2)$ and 
$\theta_c\in\Gamma({\mathbb{CP}}_n,\Lambda^1)$.
\end{theorem}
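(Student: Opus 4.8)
The plan is to read off (\ref{forced}) from equation (\ref{whatwevearranged}), which was already established in the course of proving Theorem~\ref{splittingCPn}, supplemented by one extra use of the curvature. Recall that (\ref{whatwevearranged}) says that $\nabla$ carries the section appearing on the left of (\ref{forced}) to the column whose entries are $0$, $X_{abcd}+J_{ab}\tau_{cd}$, and a third term which I shall call $\ast$, where $X_{abcd}$ and $\tau_{cd}$ are the tensors supplied by Lemma~\ref{purealgebra}. By (\ref{X}) we have $X_{abcd}=2\,\pi_\perp(\nabla_{(a}\nabla_{c)}\omega_{bd}+g_{ac}\omega_{bd})$, so the hypothesis (\ref{Xvanishes}) is exactly the statement $X_{abcd}=0$. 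The first two entries of (\ref{forced}) are therefore immediate: the top entry is $0$ and the middle entry is $J_{ab}\tau_{cd}$ with $\tau_{cd}=\tau_{[cd]}$. It remains only to show that the bottom entry $\ast$ has the form $J_{ab}\theta_c$.

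For this I would apply $\nabla$ a second time and appeal to the curvature. By (\ref{symplecticEinstein}), made explicit in (\ref{curvatureskewtracefreetractors}), the composite $\nabla\circ\nabla$ on $\Lambda_\perp^2{\mathbb{U}}$-valued forms is given by the curvature $2J_{ab}\Phi$, so $\nabla\nabla$ applied to the left side of (\ref{forced}) is $J$-proportional. On the other hand, I can evaluate $\nabla\nabla$ by applying $\nabla$ to the image just described, whose entries are $0$, $J_{ab}\tau_{cd}$ and $\ast$, using the connection (\ref{skewtracefreetractors}) and its coupled form (\ref{coupledskewtracefreetractors}). Inspecting (\ref{skewtracefreetractors}), the bottom $\Lambda^1$ slot feeds algebraically into no slot other than the middle $\Lambda^2$ one, through the terms $J_{ab}\rho_c-J_{ac}\rho_b-J_{bc}\rho_a$. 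Hence the middle entry of $\nabla\nabla$ is the sum of $\nabla$ applied to $J_{ab}\tau_{cd}$, which is $J$-proportional because $J$ is parallel, and the algebraic image of $\ast$. Since the total is $J$-proportional, the algebraic image of $\ast$ must be $J$-proportional as well.

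Now I would decompose $\ast=\ast^\perp+J_{ab}\theta_c$, where $\ast^\perp$ is trace-free with respect to $J^{ab}$ over its skew pair of indices. The $J$-trace part $J_{ab}\theta_c$ already contributes only $J$-proportional terms under the algebraic map of the previous paragraph, so the conclusion reduces to the purely algebraic assertion that the homomorphism carrying $\ast^\perp$ to the $J$-trace-free part of its algebraic image is injective; equivalently, that no non-zero trace-free $\ast^\perp$ has a $J$-proportional algebraic image. Granting this, $\ast^\perp=0$ and hence $\ast=J_{ab}\theta_c$, which is exactly (\ref{forced}).

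I expect this final injectivity to be the main obstacle, just as Lemma~\ref{purealgebra} was the crux of Theorem~\ref{splittingCPn}, and I would dispatch it in the same spirit: identify the source and target as ${\mathrm{Sp}}(2n,{\mathbb{R}})$-modules, note that the map is ${\mathrm{Sp}}(2n,{\mathbb{R}})$-equivariant, and confirm injectivity by a short dimension count or by testing on a highest-weight vector. Everything else is bookkeeping with the explicit formulas (\ref{skewtracefreetractors}) and (\ref{coupledskewtracefreetractors}): checking that the top entry of the image really is $0$, and that the parallelism of $J$ renders all the remaining contributions $J$-proportional.
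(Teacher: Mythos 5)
Your handling of the first two rows, and your strategy for the third---apply $\nabla$ once more, use the curvature identity (\ref{symplecticEinstein}) to force the result into the form $J_{[ab}(\cdot)_{c]}$, and thereby extract a purely algebraic constraint on the third slot $\ast=\rho_{bcd}$---is exactly the paper's argument. The constraint you obtain is the paper's equation (\ref{know}), and for $n\geq3$ your reduction to an injectivity statement for the induced ${\mathrm{Sp}}(2n,{\mathbb{R}})$-equivariant map from the $J$-trace-free part $\Lambda_\perp^2\otimes\Lambda^1$ into $\Lambda_\perp^3\otimes\Lambda^2$ is the right one; the paper confirms it either by direct verification or, more systematically, via the Lie algebra cohomology machinery of \S\ref{generalell} (Proposition~\ref{heisenbergcohomology} together with Kostant's theorem), so your proposed dimension count or highest-weight test would go through there.

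The genuine gap is the case $n=2$, which the theorem must cover. On ${\mathbb{CP}}_2$ one has $\Lambda_\perp^3=0$: wedging with $J$ is an isomorphism $\Lambda^1\to\Lambda^3$, so \emph{every} element of $\Lambda^3\otimes\Lambda^2$ has the form $J_{[ab}\psi_{c]de}$. The constraint you derive from the second application of $\nabla$ is therefore content-free, and the injectivity you rely on---``no non-zero trace-free $\ast^\perp$ has a $J$-proportional algebraic image''---is false: the relevant target $\Lambda_\perp^3\otimes\Lambda^2$ is the zero bundle while the source $\Lambda_\perp^2\otimes\Lambda^1$ has $20$-dimensional fibres. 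No dimension count or highest-weight argument rescues this; the statement simply fails. The paper flags precisely this point and repairs it with a different tool: a \emph{second-order} differential operator on $\Lambda^2\otimes\Lambda_\perp^2{\mathbb{U}}$, namely the coupled version of the Rumin--Seshadri operator (\ref{defofdperp2}) whose algebraic symbol is the map $\partial_\perp^{(2)}$ of Proposition~\ref{replacementproposition}, and this supplies the missing constraints on $\rho_{bcd}$. Your proof needs this extra ingredient, or some substitute for it, before it establishes the theorem on ${\mathbb{CP}}_2$.
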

\begin{proof} 
The first and second rows of the left hand side of (\ref{forced}) are 
enough to give the vanishing of the first row of the right hand side and 
${\mathcal{L}}_{bc}(\omega)$ is designed so that (\ref{whatwevearranged}) 
holds in this case. Now that we know~(\ref{Xvanishes}), it follows from  
(\ref{X}) that the second row of the right hand side of (\ref{forced}) is of 
the stated form. It remains to demonstrate that the third row is as stated. 
Evidently, given the difficulties in writing down an explicit formula for 
${\mathcal{L}}_{bc}(\omega)$, a direct verification is out of the question. 
Instead, let us observe that 
\[\left[\begin{array}c0\\ 
J_{bc}\tau_{de}\\ 
\rho_{bcd}
\end{array}\right]
\stackrel{\nabla}{\longmapsto}
\left[\begin{array}c
-J_{[ab}\tau_{c]d}\\ 
J_{[ab}\nabla_{c]}\tau_{de}
-J_{d[a}\rho_{bc]e}+J_{e[a}\rho_{bc]d}-J_{de}\rho_{[abc]}\\ 
\nabla_{[a}\rho_{bc]d}+J_{[ab}J_{c]}{}^f\tau_{df}
\end{array}\right]\]
under $\Lambda^2\otimes\Lambda_\perp^2{\mathbb{U}}
\xrightarrow{\,\nabla\,}\Lambda^3\otimes\Lambda_\perp^2{\mathbb{U}}$ and from 
(\ref{symplecticEinstein}) conclude that
\begin{equation}\label{know}
-J_{d[a}\rho_{bc]e}+J_{e[a}\rho_{bc]d}-J_{de}\rho_{[abc]}
=J_{[ab}\psi_{c]de}\end{equation}
for some $\psi_{cde}=\psi_{c[de]}$. Certainly (\ref{know}) holds if 
$\rho_{cde}=J_{cd}\theta_e$. Conversely, it may be verified without too much 
difficulty that the 
converse is true on~${\mathbb{CP}}_n$ for $n\geq 3$. On ${\mathbb{CP}}_2$ 
(\ref{know}) is content-free and a separate argument is needed: it turns out 
that there is a second order differential operator on 
$\Lambda^2\otimes\Lambda_\perp^2{\mathbb{U}}$ that may be applied to give 
sufficiently strong algebraic constraints on~$\rho_{bcd}$. This is part of the 
general theory developed in \S\ref{generalell} below and will be
omitted here (and the general theory will also provide a workaround for the
algebraic verification claimed above).
\end{proof}

We are at last in a position to prove the sufficiency of the condition given in 
Theorem~\ref{necessaryonCPn} in case $\ell=2$.

\begin{theorem}\label{prototypesufficiencyonCPn}
Suppose $\omega_{ab}$ is a globally defined smooth symmetric tensor
on ${\mathbb{CP}}_n$ and that
\[\pi_\perp(\nabla_{(a}\nabla_{c)}\omega_{bd}+g_{ac}\omega_{bd})=0.\]
Then there is a smooth $1$-form $\phi_a$ on ${\mathbb{CP}}_n$ such that 
$\omega_{ab}=\nabla_{(a}\phi_{b)}$.
\end{theorem}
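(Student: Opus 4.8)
The plan is to convert the local splitting machinery of Theorem~\ref{splittingCPn} into a global statement by exhibiting the data $(\omega_{bc},\nabla_c\omega_{db}-\nabla_d\omega_{cb},{\mathcal{L}}_{bc}(\omega))$ as a \emph{global} section of $\Lambda^1\otimes\Lambda_\perp^2{\mathbb{U}}$ lying in the range of the induced connection $\nabla_b$, and then invoking equivalence (iii)$\Rightarrow$(ii) from that theorem globally. The hypothesis $\pi_\perp(\nabla_{(a}\nabla_{c)}\omega_{bd}+g_{ac}\omega_{bd})=0$ is exactly what triggers Theorem~\ref{observation}, so the first step is to apply Theorem~\ref{observation} and conclude that, with $\Sigma$ denoting the section of $\Lambda^1\otimes\Lambda_\perp^2{\mathbb{U}}$ appearing on the left of (\ref{forced}), we have $\nabla\Sigma=J_{ab}\otimes\Xi$ for some global section $\Xi=(0,\tau_{cd},\theta_c)$ of $\Lambda_\perp^2{\mathbb{U}}$ (the first slot being zero because the top row of the right side of (\ref{forced}) vanishes). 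In other words $\Sigma$ is covariant constant modulo the curvature direction prescribed by (\ref{symplecticEinstein}).

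The heart of the argument is then a global cohomological vanishing, entirely parallel to the $\ell=1$ case treated in Theorem~\ref{desired}. Because the connection on $\Lambda_\perp^2{\mathbb{U}}$ has curvature of the special form (\ref{symplecticEinstein}), namely $(\nabla_a\nabla_b-\nabla_b\nabla_a)\Sigma=2J_{ab}\Phi\Sigma$, one checks that the operator $\Sigma\mapsto\nabla\Sigma$ composed with projection killing the $J_{ab}$-component gives a complex, and locally $\Sigma$ with $\nabla\Sigma\in J\otimes\Lambda_\perp^2{\mathbb{U}}$ differs from a genuinely flat section by a controlled ambiguity. The strategy is to mimic the reasoning after Theorem~\ref{desired}: locally one can solve $\nabla\Sigma'=\Sigma$ (equivalently realise assumption (iii) of Theorem~\ref{splittingCPn}) up to a term built from a primitive $\alpha$ of the K\"ahler form $J=d\alpha$, but globally $J$ is not exact on ${\mathbb{CP}}_n$, so the obstruction term is forced to vanish. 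Concretely, I would integrate the identity $\nabla\Sigma=J\otimes\Xi$ against the topology: the component $\theta_c$ together with the non-exactness of $J$ must conspire so that the only globally consistent possibility is $\Xi=0$, whereupon $\nabla\Sigma=0$ and $\Sigma$ is a global flat section. Since $\Lambda_\perp^2{\mathbb{U}}$ carries a flat connection after this reduction, a global parallel section is governed by the holonomy, and $H^1$ of the relevant locally constant sheaf on ${\mathbb{CP}}_n$ vanishes for the same reason as in Theorem~\ref{desired}.

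Once $\Sigma$ is shown to lie globally in the range of $\nabla_b:\Gamma({\mathbb{CP}}_n,\Lambda_\perp^2{\mathbb{U}})\to\Gamma({\mathbb{CP}}_n,\Lambda^1\otimes\Lambda_\perp^2{\mathbb{U}})$, condition (iii) of Theorem~\ref{splittingCPn} holds globally, and the theorem's equivalence (iii)$\Rightarrow$(ii) delivers a global smooth $1$-form $\phi_a$ with $\omega_{ab}=\nabla_{(a}\phi_{b)}$, which is exactly the assertion.

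I expect the main obstacle to be the global step rather than the algebra: promoting the \emph{local} solvability encoded in Theorem~\ref{splittingCPn} to a \emph{global} one, where the subtlety is precisely that $J_{ab}$ is closed but not exact on ${\mathbb{CP}}_n$. One must argue carefully that the residual term $J\otimes\Xi$ cannot be absorbed globally and therefore must be zero. The clean way to phrase this is to observe that the reduced structure group ${\mathrm{Sp}}(2(n+1),{\mathbb{R}})$ of ${\mathbb{U}}$ and the form (\ref{symplecticEinstein}) of the curvature force the relevant cohomology of the complex of sheaves to be the locally constant sheaf associated to a flat bundle, whose first cohomology over ${\mathbb{CP}}_n$ vanishes — this is the global input that has no local counterpart and replaces the naive de~Rham argument of the $\ell=1$ case.
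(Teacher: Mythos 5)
Your reduction is the right one: by Theorems~\ref{splittingCPn} and~\ref{observation} everything comes down to the global exactness of
$\Gamma({\mathbb{CP}}_n,\Lambda_\perp^2{\mathbb{U}})\xrightarrow{\,\nabla\,}\Gamma({\mathbb{CP}}_n,\Lambda^1\otimes\Lambda_\perp^2{\mathbb{U}})\xrightarrow{\,\nabla_\perp\,}\Gamma({\mathbb{CP}}_n,\Lambda_\perp^2\otimes\Lambda_\perp^2{\mathbb{U}})$,
and that is exactly where the paper goes. But the way you propose to prove exactness --- force $\Xi=0$, conclude $\nabla\Sigma=0$, then invoke flatness and the vanishing of $H^1$ of a locally constant sheaf --- cannot work on all of $\Lambda_\perp^2{\mathbb{U}}$. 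If $\Sigma=\nabla\Upsilon$ for an arbitrary $\Upsilon\in\Gamma({\mathbb{CP}}_n,\Lambda_\perp^2{\mathbb{U}})$ (which is precisely the conclusion you are after), then $\nabla_{[a}\Sigma_{b]}=J_{ab}\Phi\Upsilon$ by (\ref{symplecticEinstein}), so the obstruction is $\Xi=\Phi\Upsilon$ and is generically nonzero; any argument showing $\Xi=0$ for every $\Sigma$ with $\nabla_\perp\Sigma=0$ would show $\Phi=0$, which is false. For the same reason the connection on $\Lambda_\perp^2{\mathbb{U}}$ is \emph{not} flat --- its curvature is $2J_{ab}\Phi$ --- so there is no locally constant sheaf whose $H^1$ you can quote on the whole bundle.

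The missing ingredient is the decomposition (\ref{decomposition}): computing $\Phi^2$ from (\ref{curvatureskewtracefreetractors}) splits $\Lambda_\perp^2{\mathbb{U}}=\Lambda_{\perp,0}^2{\mathbb{U}}\oplus\Lambda_{\perp,-4}^2{\mathbb{U}}$ into parallel summands on which $\Phi^2=0$ and $\Phi^2=-4\,{\mathrm{Id}}$ respectively. On $\Lambda_{\perp,0}^2{\mathbb{U}}$ your picture is correct: there $\Phi$ vanishes, the connection is genuinely flat, simple-connectivity of ${\mathbb{CP}}_n$ trivialises the bundle, and exactness reduces to the uncoupled Theorem~\ref{desired}, where the non-exactness of $J$ really does kill the obstruction. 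On $\Lambda_{\perp,-4}^2{\mathbb{U}}$ the mechanism is the opposite of the one you describe: the obstruction does not vanish but instead \emph{hands you the potential}. From $\nabla_{[a}\Sigma_{b]}=J_{ab}\Xi$ one differentiates once more, uses injectivity of wedging with $J$ from $\Lambda^1$ to $\Lambda^3$ to obtain $\nabla_c\Xi=\Phi\Sigma_c$, notes that $\nabla\Phi=0$ by the Bianchi identity, and, since $\Phi$ is invertible on this summand, concludes $\Sigma_a=\nabla_a(-\Phi\Xi/4)$. Without this case split and the explicit use of the inverse of $\Phi$, the global step of your argument does not go through.
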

\begin{proof}
According to Theorems~\ref{splittingCPn} and~\ref{observation} it suffices to 
show that if
\[\Gamma({\mathbb{CP}}_n,\Lambda^1\otimes\Lambda_\perp^2{\mathbb{U}})\ni
\left[\begin{array}c\omega_{bc}\\ \mu_{bcd}\\ \rho_{bc}\end{array}\right]
\stackrel{\nabla_a}{\longmapsto}
\left[\begin{array}c0\\ 
J_{ab}\tau_{cd}\\ 
J_{ab}\theta_c
\end{array}\right]\in
\Gamma({\mathbb{CP}}_n,\Lambda^2\otimes\Lambda_\perp^2{\mathbb{U}})\]
for some $\tau_{cd}\in\Gamma({\mathbb{CP}}_n,\Lambda^2)$ and 
$\theta_c\in\Gamma({\mathbb{CP}}_n,\Lambda^1)$, then 
\[\left[\begin{array}c\omega_{bc}\\ \mu_{bcd}\\ \rho_{bc}\end{array}\right]
\mbox{ is in the range of }
\nabla_b:\Gamma({\mathbb{CP}}_n,\Lambda_\perp^2{\mathbb{U}})\to
\Gamma({\mathbb{CP}}_n,\Lambda^1\otimes\Lambda_\perp^2{\mathbb{U}}).\]
In other words, it suffices to show exactness of the complex
\[\Gamma({\mathbb{CP}}_n,\Lambda_\perp^2{\mathbb{U}})\xrightarrow{\,\nabla\,}
\Gamma({\mathbb{CP}}_n,\Lambda^1\otimes\Lambda_\perp^2{\mathbb{U}})
\xrightarrow{\,\nabla_\perp\,}
\Gamma({\mathbb{CP}}_n,\Lambda_\perp^2\otimes\Lambda_\perp^2{\mathbb{U}}).\]
We already used (\ref{curvatureskewtracefreetractors}) in observing 
that this is, indeed, a complex but now let us analyse 
(\ref{curvatureskewtracefreetractors}) more precisely. Certainly, it is of the 
form (\ref{symplecticEinstein}) for some 
$\Phi:\Lambda_\perp^2{\mathbb{U}}\to\Lambda_\perp^2{\mathbb{U}}$ but this 
$\Phi$ is quite special. {From} (\ref{curvatureskewtracefreetractors}) we 
compute that 
\[\left[\begin{array}c\sigma_c\\ \mu_{cd}\\ \rho_c\end{array}\right]
\stackrel{\Phi^2}{\longmapsto}
2\left[\begin{array}c-\sigma_c+J_c{}^e\rho_e\\ 
-\mu_{cd}+J_c{}^eJ_d{}^f\mu_{ef}\\ 
-\rho_c-J_c{}^e\sigma_e\end{array}\right],\]
which suggests the decomposition
\begin{equation}\label{decomposition}
\Lambda_\perp^2{\mathbb{U}}=
\Lambda_{\perp,0}^2{\mathbb{U}}\oplus\Lambda_{\perp,-4}^2{\mathbb{U}}
\end{equation}
according to
\[\left[\begin{array}c\sigma_c\\ \mu_{cd}\\ \rho_c\end{array}\right]=
\frac12\left[\begin{array}c
\sigma_c+J_c{}^e\rho_e\\ \mu_{cd}+J_c{}^eJ_d{}^f\mu_{ef}
\\ \rho_c-J_c{}^e\sigma_e\end{array}\right]
+\frac12\left[\begin{array}c
\sigma_c-J_c{}^e\rho_e\\ \mu_{cd}-J_c{}^eJ_d{}^f\mu_{ef}
\\ \rho_c+J_c{}^e\sigma_e\end{array}\right]\]
for then $\Phi^2$ vanishes on $\Lambda_{\perp,0}^2{\mathbb{U}}$ and coincides
with $-4\times{\mathrm{Id}}$ on~$\Lambda_{\perp,-4}^2{\mathbb{U}}$.
Furthermore, it is readily verified that the connection $\nabla$ respects this
decomposition. We are reduced to showing exactness of the following two
complexes:--
\begin{equation}\label{zero}
\Gamma({\mathbb{CP}}_n,\Lambda_{\perp,0}^2{\mathbb{U}})\stackrel{\nabla}{\to}
\Gamma({\mathbb{CP}}_n,\Lambda^1\otimes\Lambda_{\perp,0}^2{\mathbb{U}})
\stackrel{\nabla\!\!{}_\perp}{\to}
\Gamma({\mathbb{CP}}_n,\Lambda_\perp^2\otimes\Lambda_{\perp,0}^2{\mathbb{U}})
\end{equation}
and
\begin{equation}\label{minusfour}
\Gamma({\mathbb{CP}}_n,\Lambda_{\perp,-4}^2{\mathbb{U}})
\!\stackrel{\nabla}{\to}\!
\Gamma({\mathbb{CP}}_n,\Lambda^1\otimes\Lambda_{\perp,-4}^2{\mathbb{U}})
\!\stackrel{\nabla\!\!{}_\perp}{\to}\!
\Gamma({\mathbb{CP}}_n,\Lambda_\perp^2\otimes\Lambda_{\perp,-4}^2{\mathbb{U}}).
\end{equation}
The exactness of (\ref{minusfour}) is straightforward as follows. Suppose 
\[\Omega_a\in
\Gamma({\mathbb{CP}}_n,\Lambda^1\otimes\Lambda_{\perp,-4}^2{\mathbb{U}})\mbox{ 
satisfies }\nabla_\perp\Omega=0.\]
Then $\nabla_{[a}\Omega_{b]}=J_{ab}\Sigma$ for some
$\Sigma\in\Gamma({\mathbb{CP}}_n,\Lambda_{\perp,-4}^2{\mathbb{U}})$. {From} 
(\ref{symplecticEinstein}) it follows that 
$\nabla_{[a}\nabla_{b]}\Omega_c=J_{ab}\Phi\Omega_c$ whence
\[J_{[ab}\Phi\Omega_{c]}=\nabla_{[a}\nabla_{b}\Omega_{c]}=
\nabla_{[a}(J_{bc]}\Sigma)=J_{[ab}\nabla_{c]}\Sigma\]
and, since $\Lambda^1\xrightarrow{\,J\wedge\underbar{\enskip}\,}\Lambda^3$ is 
injective, we may conclude that $\nabla_c\Sigma=\Phi\Omega_c$. By the Bianchi 
identity, or by direct calculation, one readily verifies that 
$\nabla_a\Phi=0$. It follows that 
\[\nabla_a(-\Phi\Sigma/4)=-\Phi^2\Omega_a/4=\Omega_a,\]
as required. 

It remains to prove the exactness of~(\ref{zero}). Notice from
(\ref{curvatureskewtracefreetractors}) that $\Phi$ already vanishes on
$\Lambda_{\perp,0}^2{\mathbb{U}}$, which means that our connection
(\ref{skewtracefreetractors}) is actually flat
on~$\Lambda_{\perp.0}^2{\mathbb{U}}$. As ${\mathbb{CP}}_n$ is simply-connected,
the vector bundle $\Lambda_{\perp,0}^2{\mathbb{U}}$ is trivialised by this
connection and the exactness of (\ref{zero}) follows immediately from the 
corresponding uncoupled Theorem~\ref{desired}.
\end{proof}

The complete proof in case $\ell=2$ may seem rather complicated and, indeed,
the detailed analysis is necessarily severe. However, as we shall see in the
following section, the general argument can be given rather cleanly. In
particular, the awkward Lemma~\ref{purealgebra} can be formulated and finessed
by means of suitable Lie algebra cohomology.

\subsection{The case $\ell\geq 2$}\label{generalell} The following discussion
is a strict generalisation of the case $\ell=2$ given in~\S\ref{ellequalstwo}
above. Firstly we generalise the bundle $\Lambda_\perp^2{\mathbb{U}}$ and its
connection constructed from~(\ref{newtractors}). Recall that the natural
structure group for ${\mathbb{U}}$ is ${\mathrm{Sp}}(2(n+1),{\mathbb{R}})$ and
so we may form an induced bundle with connection for any irreducible
representation thereof. In particular, the bundle 
$\Lambda_\perp^2{\mathbb{U}}$ arises from the representation 
\[\begin{picture}(90,10)
\put(0,0){\line(1,0){50}}
\put(60,0){\makebox(0,0){$\cdots$}}
\put(70,0){\line(1,0){5}}
\put(75,1){\line(1,0){15}}
\put(75,-1){\line(1,0){15}}
\put(0,0){\makebox(0,0){$\bullet$}}
\put(0,7){\makebox(0,0){\scriptsize$0$}}
\put(15,0){\makebox(0,0){$\bullet$}}
\put(15,7){\makebox(0,0){\scriptsize$1$}}
\put(30,0){\makebox(0,0){$\bullet$}}
\put(30,7){\makebox(0,0){\scriptsize$0$}}
\put(45,0){\makebox(0,0){$\bullet$}}
\put(45,7){\makebox(0,0){\scriptsize$0$}}
\put(75,0){\makebox(0,0){$\bullet$}}
\put(75,7){\makebox(0,0){\scriptsize$0$}}
\put(90,0){\makebox(0,0){$\bullet$}}
\put(90,7){\makebox(0,0){\scriptsize$0$}}
\put(82,0){\makebox(0,0){$\langle$}}
\end{picture}\qquad\mbox{($n+1$ nodes)}\]
and, more generally, let us consider the bundle $Y_\perp^{\ell-1}{\mathbb{U}}$
induced by
\begin{equation}\label{YbundleonCPn}\begin{picture}(100,10)
\put(0,0){\line(1,0){60}}
\put(70,0){\makebox(0,0){$\cdots$}}
\put(80,0){\line(1,0){5}}
\put(85,1){\line(1,0){15}}
\put(85,-1){\line(1,0){15}}
\put(0,0){\makebox(0,0){$\bullet$}}
\put(0,7){\makebox(0,0){\scriptsize$0$}}
\put(20,0){\makebox(0,0){$\bullet$}}
\put(20,7){\makebox(0,0){\scriptsize$\ell-1$}}
\put(40,0){\makebox(0,0){$\bullet$}}
\put(40,7){\makebox(0,0){\scriptsize$0$}}
\put(55,0){\makebox(0,0){$\bullet$}}
\put(55,7){\makebox(0,0){\scriptsize$0$}}
\put(85,0){\makebox(0,0){$\bullet$}}
\put(85,7){\makebox(0,0){\scriptsize$0$}}
\put(100,0){\makebox(0,0){$\bullet$}}
\put(100,7){\makebox(0,0){\scriptsize$0$}}
\put(92,0){\makebox(0,0){$\langle$}}
\end{picture}\;\raisebox{-2pt}{.}\end{equation}
As tensor bundles, these are quite complicated, e.g.
\[Y_\perp^1{\mathbb{U}}=\Lambda_\perp^2{\mathbb{U}}=
\begin{picture}(6,6)
\put(0,0){\line(1,0){6}}
\put(0,0){\line(0,1){6}}
\put(0,6){\line(1,0){6}}
\put(6,0){\line(0,1){6}}
\end{picture}\oplus
\begin{picture}(6,6)
\put(0,-3){\line(1,0){6}}
\put(0,-3){\line(0,1){12}}
\put(0,3){\line(1,0){6}}
\put(6,-3){\line(0,1){12}}
\put(0,9){\line(1,0){6}}
\end{picture}\oplus
\begin{picture}(6,6)
\put(0,0){\line(1,0){6}}
\put(0,0){\line(0,1){6}}
\put(0,6){\line(1,0){6}}
\put(6,0){\line(0,1){6}}
\end{picture}
\qquad\qquad Y_\perp^2{\mathbb{U}}=
\begin{picture}(12,6)
\put(0,0){\line(1,0){12}}
\put(0,0){\line(0,1){6}}
\put(0,6){\line(1,0){12}}
\put(6,0){\line(0,1){6}}
\put(12,0){\line(0,1){6}}
\end{picture}\oplus
\begin{picture}(12,6)
\put(0,-3){\line(1,0){6}}
\put(0,-3){\line(0,1){12}}
\put(0,3){\line(1,0){12}}
\put(6,-3){\line(0,1){12}}
\put(0,9){\line(1,0){12}}
\put(12,3){\line(0,1){6}}
\end{picture}\oplus\!\!\!
\raisebox{2pt}{$\begin{array}c\begin{picture}(12,12)
\put(0,-3){\line(1,0){12}}
\put(0,-3){\line(0,1){12}}
\put(0,3){\line(1,0){12}}
\put(6,-3){\line(0,1){12}}
\put(12,-3){\line(0,1){12}}
\put(0,9){\line(1,0){12}}
\end{picture}\\ \\
\begin{picture}(12,6)
\put(0,0){\line(1,0){12}}
\put(0,0){\line(0,1){6}}
\put(0,6){\line(1,0){12}}
\put(6,0){\line(0,1){6}}
\put(12,0){\line(0,1){6}}
\end{picture}\end{array}$}\!\!\!\oplus
\begin{picture}(12,6)
\put(0,-3){\line(1,0){6}}
\put(0,-3){\line(0,1){12}}
\put(0,3){\line(1,0){12}}
\put(6,-3){\line(0,1){12}}
\put(0,9){\line(1,0){12}}
\put(12,3){\line(0,1){6}}
\end{picture}\oplus
\begin{picture}(12,6)
\put(0,0){\line(1,0){12}}
\put(0,0){\line(0,1){6}}
\put(0,6){\line(1,0){12}}
\put(6,0){\line(0,1){6}}
\put(12,0){\line(0,1){6}}
\end{picture}\;,\]
but we shall not need to know the details. The curvature of the induced
connection on $Y_\perp^{\ell-1}{\mathbb{U}}$ is given by
\begin{equation}\label{formofcurvature}
(\nabla_a\nabla_b-\nabla_b\nabla_a)\Sigma=2J_{ab}\Psi\Sigma,\end{equation}
where $\Psi\in\End(Y_\perp^{\ell-1}{\mathbb{U}})$ is induced by
$\Phi\in\End({\mathbb{U}})$ defined by equations (\ref{curvatureofU})
and~(\ref{symplecticEinstein}). The form of the curvature
(\ref{formofcurvature}) is all we know in order to proceed with a rather
general construction as follows. We shall be mimicking the construction of the
Bernstein-Gelfand-Gelfand complex on projective space given in~\cite{eg}.
For simplicity let us suppose that $n\geq 3$, postponing the case $n=2$ for
later discussion. It is clear that the complex (\ref{nextRScomplex}) can be
naturally coupled with $Y_\perp^{\ell-1}{\mathbb{U}}$ to yield a complex
\begin{equation}\label{coupledRS}
Y_\perp^{\ell-1}{\mathbb{U}}\xrightarrow{\,\nabla\,}
\Lambda^1\otimes Y_\perp^{\ell-1}{\mathbb{U}}
\xrightarrow{\,\nabla_\perp\,}
\Lambda_\perp^2\otimes Y_\perp^{\ell-1}{\mathbb{U}}
\xrightarrow{\,\nabla_\perp\,}
\Lambda_\perp^3\otimes Y_\perp^{\ell-1}{\mathbb{U}}\end{equation}
and we maintain that this complex is naturally filtered. This is because, 
by~(\ref{newtractors}), the same is evidently true of the
${\mathbb{U}}$-coupled complex
\[\begin{array}{ccccccc}
{\mathbb{U}}&\xrightarrow{\,\nabla\,}&
\Lambda^1\otimes{\mathbb{U}}
&\xrightarrow{\,\nabla_\perp\,}&
\Lambda_\perp^2\otimes{\mathbb{U}}
&\xrightarrow{\,\nabla_\perp\,}&
\Lambda_\perp^3\otimes{\mathbb{U}}\\
\|&&\|&&\|&&\|\\
\begin{array}c\Lambda^0\\ \oplus\\ \Lambda^1\\ \oplus\\ \Lambda^0\end{array}
&\begin{array}c \\ 
\begin{picture}(0,0)\put(-9,-6){\vector(3,2){18}}\end{picture}\\ 
\\ 
\begin{picture}(0,0)\put(-9,-6){\vector(3,2){18}}\end{picture}\\ 
\\ 
\end{array}&
\begin{array}c\Lambda^1\\ \oplus\\ \Lambda^1\otimes\Lambda^1\\ \oplus\\ 
\Lambda^1\end{array}
&\begin{array}c \\ 
\begin{picture}(0,0)\put(-9,-6){\vector(3,2){18}}\end{picture}\\ 
\\ 
\begin{picture}(0,0)\put(-9,-6){\vector(3,2){18}}\end{picture}\\ 
\\ 
\end{array}&
\begin{array}c\Lambda_\perp^2\\ \oplus\\ \Lambda_\perp^2\otimes\Lambda^1\\ 
\oplus\\ \Lambda_\perp^2\end{array}
&\begin{array}c \\ 
\begin{picture}(0,0)\put(-9,-6){\vector(3,2){18}}\end{picture}\\ 
\\ 
\begin{picture}(0,0)\put(-9,-6){\vector(3,2){18}}\end{picture}\\ 
\\ 
\end{array}&
\begin{array}c\Lambda_\perp^3\\ \oplus\\ \Lambda_\perp^3\otimes\Lambda^1\\ 
\oplus\\ \Lambda_\perp^3\end{array}
\end{array}\] 
and the filtration on (\ref{coupledRS}) is inherited therefrom. Writing
\[Y_\perp^{\ell-1}{\mathbb{U}}={\mathbb{V}}=
{\mathbb{V}}_0\oplus{\mathbb{V}}_1\oplus{\mathbb{V}}_2\oplus{\mathbb{V}}_3
\oplus\cdots\oplus{\mathbb{V}}_N\]
for the associated graded vector bundle, the $E_0$-level of the resulting
spectral sequence has the form
\begin{equation}\label{E0level}\raisebox{-60pt}{\begin{picture}(300,100)
\put(0,0){\vector(1,0){100}}
\put(0,0){\vector(0,1){50}}
\put(92,5){\makebox(0,0){$p$}}
\put(5,42){\makebox(0,0){$q$}}
\put(20,90){\makebox(0,0){${\mathbb{V}}_0$}}
\put(70,60){\makebox(0,0){${\mathbb{V}}_1$}}
\put(120,30){\makebox(0,0){${\mathbb{V}}_2$}}
\put(140,15){\makebox(0,0){$\ddots$}}
\put(70,90){\makebox(0,0){$\Lambda^1\otimes{\mathbb{V}}_0$}}
\put(120,60){\makebox(0,0){$\Lambda^1\otimes{\mathbb{V}}_1$}}
\put(170,30){\makebox(0,0){$\Lambda^1\otimes{\mathbb{V}}_2$}}
\put(190,15){\makebox(0,0){$\ddots$}}
\put(120,90){\makebox(0,0){$\Lambda_\perp^2\otimes{\mathbb{V}}_0$}}
\put(170,60){\makebox(0,0){$\Lambda_\perp^2\otimes{\mathbb{V}}_1$}}
\put(220,30){\makebox(0,0){$\Lambda_\perp^2\otimes{\mathbb{V}}_2$}}
\put(240,15){\makebox(0,0){$\ddots$}}
\put(170,90){\makebox(0,0){$\Lambda_\perp^3\otimes{\mathbb{V}}_0$}}
\put(220,60){\makebox(0,0){$\Lambda_\perp^3\otimes{\mathbb{V}}_1$}}
\put(270,30){\makebox(0,0){$\Lambda_\perp^3\otimes{\mathbb{V}}_2$}}
\put(290,15){\makebox(0,0){$\ddots$}}
\put(70,75){\makebox(0,0){$\uparrow\scriptstyle\partial$}}
\put(124,75){\makebox(0,0){$\uparrow\scriptstyle\partial_\perp$}}
\put(174,75){\makebox(0,0){$\uparrow\scriptstyle\partial_\perp$}}
\put(120,45){\makebox(0,0){$\uparrow\scriptstyle\partial$}}
\put(174,45){\makebox(0,0){$\uparrow\scriptstyle\partial_\perp$}}
\put(224,45){\makebox(0,0){$\uparrow\scriptstyle\partial_\perp$}}
\end{picture}}\end{equation}
where all differentials are vector bundle homomorphisms. The key point is that
we can identify much of the $E_1$-level explicitly. Take, for example, the 
homomorphism $\partial:{\mathbb{V}}_1\to\Lambda^1\otimes{\mathbb{V}}_0$. It is 
induced by the identity mapping 
$\Lambda^1={\mathbb{U}}_1\to\Lambda^1\otimes{\mathbb{U}}_0=\Lambda^1$ and thus
may be identified as the canonical inclusion  
\[{\mathbb{V}}_1=\begin{picture}(36,6)
\put(0,-3){\line(1,0){6}}
\put(0,-3){\line(0,1){12}}
\put(0,3){\line(1,0){36}}
\put(6,-3){\line(0,1){12}}
\put(0,9){\line(1,0){36}}
\put(12,3){\line(0,1){6}}
\put(30,3){\line(0,1){6}}
\put(36,3){\line(0,1){6}}
\put(22,5.5){\makebox(0,0){$\cdots$}}
\end{picture}\hookrightarrow
\begin{picture}(6,6)
\put(0,0){\line(0,1){6}}
\put(0,0){\line(1,0){6}}
\put(6,0){\line(0,1){6}}
\put(0,6){\line(1,0){6}}
\end{picture}\otimes\underbrace{\begin{picture}(36,6)
\put(0,0){\line(0,1){6}}
\put(0,0){\line(1,0){36}}
\put(6,0){\line(0,1){6}}
\put(0,6){\line(1,0){36}}
\put(12,0){\line(0,1){6}}
\put(30,0){\line(0,1){6}}
\put(36,0){\line(0,1){6}}
\put(22,2.5){\makebox(0,0){$\cdots$}}
\end{picture}}_{\ell-1\ \mathrm{boxes}}=\Lambda^1\otimes{\mathbb{V}}_0\]
with quotient $\bigodot^\ell\!\Lambda^1$. As a more subtle example, when 
$\ell=2$ we have ${\mathbb{V}}=\Lambda_\perp^1{\mathbb{U}}$ with 
${\mathbb{V}}_0\oplus{\mathbb{V}}_1\oplus{\mathbb{V}}_2=
\Lambda^1\oplus\Lambda^2\oplus\Lambda^1$ and 
\[\begin{array}{ccccc}
\Lambda^1\otimes{\mathbb{V}}_2&\xrightarrow{\,\partial_\perp
\,}&
\Lambda_\perp^2\otimes{\mathbb{V}}_1&\xrightarrow{\,\partial_\perp\,}&
\Lambda_\perp^3\otimes{\mathbb{V}}_0\\
\|&&\|&&\|\\
\Lambda^1\otimes\Lambda^1&\to&\Lambda_\perp^2\otimes\Lambda^2
&\to&\Lambda_\perp^3\otimes\Lambda^1
\end{array}\]
whose cohomology is actually the subject of Lemma~\ref{purealgebra}, being
identified there as
\[H=\{X_{abcd}\mbox{ s.t.\ }X_{abcd}=X_{[ab][cd]}\mbox{ and }X_{[abc]d}=0
\mbox{ and }J^{ab}X_{abcd}=0\}\]
corresponding to the irreducible representation 
\[\begin{picture}(12,12)
\put(0,-3){\line(1,0){12}}
\put(0,-3){\line(0,1){12}}
\put(0,3){\line(1,0){12}}
\put(6,-3){\line(0,1){12}}
\put(12,-3){\line(0,1){12}}
\put(0,9){\line(1,0){12}}
\put(15,-2){\makebox(0,0){$\scriptstyle\perp$}}
\end{picture}\;=\;\begin{picture}(75,10)
\put(0,0){\line(1,0){35}}
\put(45,0){\makebox(0,0){$\cdots$}}
\put(55,0){\line(1,0){5}}
\put(60,1){\line(1,0){15}}
\put(60,-1){\line(1,0){15}}
\put(0,0){\makebox(0,0){$\bullet$}}
\put(0,7){\makebox(0,0){\scriptsize$0$}}
\put(15,0){\makebox(0,0){$\bullet$}}
\put(15,7){\makebox(0,0){\scriptsize$2$}}
\put(30,0){\makebox(0,0){$\bullet$}}
\put(30,7){\makebox(0,0){\scriptsize$0$}}
\put(60,0){\makebox(0,0){$\bullet$}}
\put(60,7){\makebox(0,0){\scriptsize$0$}}
\put(75,0){\makebox(0,0){$\bullet$}}
\put(75,7){\makebox(0,0){\scriptsize$0$}}
\put(67,0){\makebox(0,0){$\langle$}}
\end{picture}\qquad\mbox{($n$ nodes)}\]
of ${\mathrm{Sp}}(2n,{\mathbb{R}})$. Similarly, the proof of
Theorem~\ref{observation} for $n\geq 3$ boils down to 
\[\begin{array}{crc}
\Lambda_\perp^2\otimes{\mathbb{V}}_2
&\xrightarrow{\hspace*{60pt}\partial_\perp\hspace*{60pt}}
\Lambda_\perp^3\otimes{\mathbb{V}}_1=&
\Lambda_\perp^3\otimes\Lambda^2
\\ \|&&\uparrow\\
\Lambda_\perp^2\otimes\Lambda^1&\ni\rho_{bcd}\mapsto
J_{d[a}\rho_{bc]e}-J_{e[a}\rho_{bc]d}+\rho_{[abc]}J_{de}
\in&\Lambda^3\otimes\Lambda^2
\end{array}\]
being injective. In general, the $E_1$-level of the spectral sequence is 
controlled by Lie algebra cohomology as follows.
\begin{proposition}\label{heisenbergcohomology}
Suppose ${\mathbb{V}}$ is a representation of the Heisenberg algebra
${\mathfrak{h}}_{2n+1}={\mathfrak{g}}_{-2}\oplus{\mathfrak{g}}_{-1}$ of
dimension $2n+1$ for $n\geq 3$ where ${\mathfrak{g}}_{-2}$ denotes the centre
and for $X,Y\in{\mathfrak{g}}_{-1}$, we have $[X,Y]=J(X\wedge Y)$ for a
non-degenerate symplectic form
$J:\Lambda^2{\mathfrak{g}}_{-1}\to{\mathfrak{g}}_{-2}$. Then the Lie algebra 
cohomologies $H^r({\mathfrak{h}}_{2n+1},{\mathbb{V}})$ for $r=0,1,2$ 
may be computed by the complex
\[0\to{\mathbb{V}}\xrightarrow{\,\partial\,}
\Hom({\mathfrak{g}}_{-1},{\mathbb{V}})
\xrightarrow{\,\partial_\perp\,}
\Hom(\Lambda_\perp^2{\mathfrak{g}}_{-1},{\mathbb{V}})
\xrightarrow{\,\partial_\perp\,}
\Hom(\Lambda_\perp^3{\mathfrak{g}}_{-1},{\mathbb{V}}),\]
induced by the action of ${\mathfrak{g}}_{-1}$ on~${\mathbb{V}}$.
\end{proposition}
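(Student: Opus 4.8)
The plan is to read off the cohomology from the Chevalley--Eilenberg complex $C^\bullet=\Hom(\Lambda^\bullet\mathfrak{h}_{2n+1},\mathbb{V})$, whose cohomology is $H^\bullet(\mathfrak{h}_{2n+1},\mathbb{V})$ by definition, and to identify the four-term complex of the statement as a quotient of $C^\bullet$ computing the same cohomology in degrees $r\le 2$. First I would split off the one-dimensional centre. Fixing a generator $Z$ of $\mathfrak{g}_{-2}$, the decomposition $\Lambda^k\mathfrak{h}_{2n+1}=\Lambda^k\mathfrak{g}_{-1}\oplus(Z\wedge\Lambda^{k-1}\mathfrak{g}_{-1})$ writes each cochain group as $C^k=A^k\oplus B^{k-1}$ with $A^k=\Hom(\Lambda^k\mathfrak{g}_{-1},\mathbb{V})$ and $B^{k-1}=\Hom(\Lambda^{k-1}\mathfrak{g}_{-1},\mathbb{V})$. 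Reading the Chevalley--Eilenberg differential in these terms presents it in block form: the diagonal entries are the Koszul operator $\partial$ induced by the $\mathfrak{g}_{-1}$-action (the operator of the statement), the entry $B^{k-1}\to A^{k+1}$ is the Lefschetz operator $L$ wedging the form-part with the symplectic form $J$, and the entry $A^k\to B^k$ is the action $z\in\End(\mathbb{V})$ of the central generator $Z$. Because the sole bracket is $[X,Y]=J(X\wedge Y)Z$, the operator $\partial$ fails to square to zero; instead the non-commutativity of the $\mathfrak{g}_{-1}$-action on $\mathbb{V}$ yields the single structural identity $\partial^2=-Lz$, which together with $\partial L=L\partial$ and $\partial z=z\partial$ is precisely the content of $d^2=0$.

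Next I would invoke the Lefschetz $\mathfrak{sl}_2$ on $\Lambda^\bullet\mathfrak{g}_{-1}$ and its primitive decomposition $\Lambda^k\mathfrak{g}_{-1}=\bigoplus_{j\ge0}L^j\Lambda_\perp^{k-2j}\mathfrak{g}_{-1}$, together with the hard-Lefschetz fact that $L\colon\Lambda^k\mathfrak{g}_{-1}\to\Lambda^{k+2}\mathfrak{g}_{-1}$ is injective for $k\le n-1$. Restricting a cochain to primitive vectors is a projection $\perp\colon A^k\to\Hom(\Lambda_\perp^k\mathfrak{g}_{-1},\mathbb{V})$, and since wedging with $J$ is adjoint to contraction against $J^{-1}$ one has $\perp\circ L=0$. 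Letting $R^\bullet$ denote the reduced four-term complex with differential $\partial_\perp=\perp\circ\partial$, I define $p\colon C^\bullet\to R^\bullet$ to be $\perp$ on the $A$-part and zero on the $B$-part; the identities $\perp L=0$ and $\partial L=L\partial$ show immediately that $p$ is a surjective map of complexes. Its kernel fits in a short exact sequence $0\to\ker p\to C^\bullet\to R^\bullet\to0$, and the associated long exact sequence reduces the proposition to the vanishing $H^r(\ker p)=0$ for $0\le r\le 3$. The mechanism driving this vanishing is already visible in low degrees: if a primitive cochain $\alpha$ satisfies $\partial\alpha=-Lw$, then applying $\partial$ and using $\partial^2=-Lz$ with the injectivity of $L$ forces $z\alpha=\partial w$, so the central-direction ($B$) data is completely determined by, and adds nothing to, the primitive ($A$) data.

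The substantive step, and the one I expect to be the main obstacle, is the acyclicity of $\ker p$ in degrees $2$ and $3$. By construction $(\ker p)^k$ is the sum of the non-primitive part of $A^k$ with the whole of $B^{k-1}$, and the off-diagonal $L\colon B^{k-1}\to A^{k+1}$ carries $B$ isomorphically onto the lowest non-primitive layer wherever $L$ is injective. I would exploit this by inverting $L$ on its image to build a contracting homotopy on $\ker p$, the homotopy identities closing up precisely because of $\partial L=L\partial$, $\partial z=z\partial$ and $\partial^2=-Lz$; the residual bookkeeping is a finite check in each degree $r\le 3$ using the primitive decompositions of $\Lambda^2\mathfrak{g}_{-1}$, $\Lambda^3\mathfrak{g}_{-1}$ and $\Lambda^4\mathfrak{g}_{-1}$. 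This is the non-reductive, Heisenberg-algebra substitute for Kostant's Hodge-theoretic computation, with the Lefschetz $\mathfrak{sl}_2$ playing the part of the Kostant Laplacian. The hypothesis $n\ge3$ enters here and only here: it is exactly what guarantees the injectivity of $L\colon\Lambda^2\mathfrak{g}_{-1}\to\Lambda^4\mathfrak{g}_{-1}$ required to kill $H^3(\ker p)$, and hence to secure the isomorphism $H^2(\mathfrak{h}_{2n+1},\mathbb{V})\cong H^2(R^\bullet)$ in the top degree of interest.
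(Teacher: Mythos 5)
Your argument is correct and is essentially the paper's: both pass to the Chevalley--Eilenberg (Koszul) complex, split each cochain group as $\Hom(\Lambda^k\mathfrak{g}_{-1},\mathbb{V})\oplus\Hom(\mathfrak{g}_{-2}\otimes\Lambda^{k-1}\mathfrak{g}_{-1},\mathbb{V})$ with the differential in the block form you describe, and then compare with the primitive (trace-free) complex --- the paper compresses this last comparison into ``easy diagram chasing,'' which your short-exact-sequence/hard-Lefschetz organisation of the chase makes explicit. You have also correctly located the role of $n\geq 3$ as the injectivity of $L:\Lambda^2\mathfrak{g}_{-1}\to\Lambda^4\mathfrak{g}_{-1}$ needed to kill $H^3$ of the kernel.
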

\begin{proof}
Let us introduce abstract indices in the sense of~\cite{PR1} to write
$\eth_a$ for the action of ${\mathfrak{g}}_{-1}$ on ${\mathbb{V}}$ meaning 
that $Xv=X^a\eth_av$ for $X\in{\mathfrak{g}}_{-1}$. Then 
\[\textstyle v\!\stackrel{\partial}{\mapsto}\!\eth_av\quad
v_a\!\stackrel{\partial_\perp}{\longmapsto}\!
\eth_{[a}v_{b]}-\frac{1}{2n}J^{cd}\eth_cv_dJ_{ab}
\quad
v_{ab}\!\stackrel{\partial_\perp}{\longmapsto}\!
\eth_{[a}v_{bc]}-\frac{1}{n-1}J^{de}\eth_dv_{e[a}J_{bc]}\]
are the explicit formul{\ae} for the differentials of the complex in question.
Let us also write $\eth$ for the action of ${\mathfrak{g}}_{-2}$ 
on~${\mathbb{V}}$. Then to say that ${\mathbb{V}}$ is an 
${\mathfrak{h}}_{2n+1}$-module is precisely that 
\[\eth_a\eth_b-\eth_b\eth_av=2J_{ab}\eth v\quad\forall\,v\in{\mathbb{V}}\]
and the differentials of the usual Koszul complex
$\Lambda^\bullet({\mathfrak{h}}_{2n+1})^*\otimes{\mathbb{V}}$ defining the Lie
algebra cohomology begin with
\[{\mathbb{V}}\ni v\mapsto
\left[\begin{array}c\eth_a v\\ \eth v\end{array}\right]\in
\begin{array}c{\mathfrak{g}}_{-1}^*\otimes{\mathbb{V}}\\ \oplus\\ 
{\mathfrak{g}}_{-2}^*\otimes{\mathbb{V}}\end{array}=
({\mathfrak{h}}_{2n+1})^*\otimes{\mathbb{V}}\]
and continue with
\[\begin{array}{ccc}
\begin{array}c\Lambda^p{\mathfrak{g}}_{-1}^*\otimes{\mathbb{V}}\\ \oplus\\ 
{\mathfrak{g}}_{-2}^*\otimes\Lambda^{p-1}{\mathfrak{g}}_{-1}^*
\otimes{\mathbb{V}}\end{array}&\longrightarrow&
\begin{array}c\Lambda^{p+1}{\mathfrak{g}}_{-1}^*\otimes{\mathbb{V}}\\ \oplus\\ 
{\mathfrak{g}}_{-2}^*\otimes\Lambda^p{\mathfrak{g}}_{-1}^*\otimes{\mathbb{V}}
\end{array}\qquad\mbox{for }p\geq 1\\
\rule{0pt}{22pt}
\left[\begin{array}cv_{ab\cdots cd}\\ w_{ab\cdots c}\end{array}\right]
&\longmapsto&
\left[\begin{array}l\eth_{[a}v_{bc\cdots de]}+(-1)^pJ_{[ab}w_{c\cdots de]}\\ 
\eth_{[a}w_{bc\cdots d]}+(-1)^p\eth v_{abc\cdots d}\end{array}\right]
\end{array}\]
Easy diagram chasing gives the desired result.\end{proof}
\noindent
When $n=2$, there is a replacement for Proposition~\ref{heisenbergcohomology}
as follows.
\begin{proposition}\label{replacementproposition}
Suppose ${\mathbb{V}}$ is a representation of the Heisenberg algebra
${\mathfrak{h}}_{5}={\mathfrak{g}}_{-2}\oplus{\mathfrak{g}}_{-1}$ of
dimension~$5$. Then the Lie algebra 
cohomologies $H^r({\mathfrak{h}}_5,{\mathbb{V}})$ for $r=0,1,2$ 
may be computed by the complex
\[0\to{\mathbb{V}}\xrightarrow{\,\partial\,}
{\mathfrak{g}}_{-1}^*\otimes{\mathbb{V}}
\xrightarrow{\,\partial_\perp\,}
\Lambda_\perp^2{\mathfrak{g}}_{-1}^*\otimes{\mathbb{V}}
\xrightarrow{\,\partial_\perp^{(2)}\,}
{\mathfrak{g}}_{-2}^*\otimes\Lambda_\perp^2{\mathfrak{g}}_{-1}^*\otimes
{\mathbb{V}}\]
where, adopting the notation from the proof of
Proposition~\ref{heisenbergcohomology}, the linear transformation
$\partial_\perp^{(2)}\!\!:
\Lambda_\perp^2{\mathfrak{g}}_{-1}^*\otimes{\mathbb{V}}\to
{\mathfrak{g}}_{-2}^*\otimes\Lambda_\perp^2{\mathfrak{g}}_{-1}^*\otimes
{\mathbb{V}}$ is given by 
\[v_{ab}\longmapsto J^{cd}\eth_c\eth_{[a}v_{b]d}+3\eth v_{ab}.\]
\end{proposition}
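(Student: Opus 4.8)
The plan is to compute the Lie algebra cohomology $H^r(\mathfrak{h}_5,\mathbb{V})$ directly from the Chevalley--Eilenberg (Koszul) complex $\Lambda^\bullet\mathfrak{h}_5^*\otimes\mathbb{V}$ and to match its cohomology in degrees $r\le 2$ with that of the displayed four-term complex. I would write the Koszul complex in the two-row form used in the proof of Proposition~\ref{heisenbergcohomology}, separating off the powers of $\mathfrak{g}_{-2}^*$, and then decompose the horizontal factors $\Lambda^\bullet\mathfrak{g}_{-1}^*$ into their primitive (trace-free) pieces under the symplectic form $J$. The entire point of the case $n=2$ is that $\Lambda_\perp^3\mathfrak{g}_{-1}^*=0$, so the first-order primitive differential $\Lambda_\perp^2\to\Lambda_\perp^3$ that appears for $n\ge 3$ in Proposition~\ref{heisenbergcohomology} now has zero target and must be replaced by a second-order operator. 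This is the exact algebraic shadow of the Rumin--Seshadri replacement (\ref{defofdperp2}) of the complex (\ref{nextRScomplex}) that was already forced on $\mathbb{CP}_2$, and I would lean on that parallel throughout.

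In degrees $0$ and $1$ nothing changes from the $n\ge 3$ argument, since only $\Lambda_\perp^0$, $\Lambda_\perp^1$ and $\Lambda_\perp^2$ intervene. The identity $H^0=\ker\partial$ is immediate because $\mathfrak{g}_{-2}=[\mathfrak{g}_{-1},\mathfrak{g}_{-1}]$ forces $\eth v=0$ whenever $\eth_a v=0$. For $H^1$ I would take a primitive cocycle $v_a\in\ker\partial_\perp$, set $w=\tfrac{1}{2n}J^{cd}\eth_c v_d$, and check that $(v_a,w)$ is a genuine Koszul $1$-cocycle. The only non-formal point is the second Koszul equation $\eth_a w=\eth v_a$; this follows from the Heisenberg relation $\eth_a\eth_b-\eth_b\eth_a=2J_{ab}\eth$ together with $J^{cd}\eth_c\eth_d=2n\,\eth$ after a short contraction (and it is precisely here that $n\ge 2$ enters, through a division by $2n-2$). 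The resulting map on cohomology is injective and surjective by the same diagram chase as in Proposition~\ref{heisenbergcohomology}.

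The substance of the argument is the degree-$2$ term. Because $\Lambda_\perp^3\mathfrak{g}_{-1}^*=0$, for any primitive $v_{ab}$ the form $\eth_{[a}v_{bc]}$ is automatically $J$-divisible, say $\eth_{[a}v_{bc]}=-J_{[ab}w_{c]}$, which \emph{determines} a $1$-form $w_c$ exactly as $d\xi=\mu\wedge J$ determines $\mu$ in (\ref{defofdperp2}). The pair $(v_{ab},w_a)$ then satisfies the first Koszul $2$-cocycle equation by construction, while the second Koszul equation $\eth v_{ab}+\eth_{[a}w_{b]}=0$ becomes a condition purely on $v_{ab}$. I would verify by direct computation, using only the commutator identity and the Lefschetz bookkeeping, that the primitive part of this condition is precisely $\partial_\perp^{(2)}v_{ab}=J^{cd}\eth_c\eth_{[a}v_{b]d}+3\eth v_{ab}$; this simultaneously pins down the coefficient $3$, shows that the output is $J$-trace-free (so lands in the stated primitive space), and gives $\partial_\perp^{(2)}\circ\partial_\perp=0$.

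The chase at $r=2$ then reads off: a primitive $2$-cocycle $v_{ab}\in\ker\partial_\perp^{(2)}$ lifts, via the $w_a$ just constructed, to a Koszul $2$-cocycle, and conversely every Koszul $2$-cocycle projects to such a $v_{ab}$, with the coboundaries on both sides matching $\im\partial_\perp$; hence the second cohomology of the complex equals $H^2(\mathfrak{h}_5,\mathbb{V})$. The hard part, and the step I expect to absorb all the effort, is this degree-$2$ verification: confirming that the $J$-divisibility coefficient $w_a$, substituted into the second Koszul equation, reproduces the stated second-order operator with the correct constant and with no leftover lower-order terms. I anticipate this being routine but delicate $\mathfrak{sl}_2$/commutator bookkeeping rather than a conceptual obstruction, and it is forced in exactly the manner that the Rumin--Seshadri operator (\ref{defofdperp2}) was forced in the $\ell=1$ discussion.
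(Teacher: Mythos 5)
Your proposal is correct and follows essentially the same route as the paper: the paper's own proof is just the one-line remark that one repeats the diagram chase from Proposition~\ref{heisenbergcohomology}, with one extra step forced by $\Lambda_\perp^3\mathfrak{g}_{-1}^*=0$, and your degree-$2$ analysis (solving $\eth_{[a}v_{bc]}=-J_{[ab}w_{c]}$ for $w$ and substituting into the second Koszul equation to produce $J^{cd}\eth_c\eth_{[a}v_{b]d}+3\eth v_{ab}$) is exactly that extra step, with the coefficient $3$ coming out correctly.
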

\begin{proof}
This is what emerges by following the proof of 
Proposition~\ref{heisenbergcohomology}. The only difference when $n=2$ is that 
$\Lambda_\perp^3{\mathfrak{g}}_{-1}^*=0$ and so there is one extra step in the 
resulting diagram chase.
\end{proof}

Looking back at the $E_0$-level (\ref{E0level}) of our spectral sequence, we
see that for $n\geq 3$, Proposition~\ref{heisenbergcohomology} is exactly what
we need to identify much of the $E_1$-level provided we are able to identify
the Lie algebra cohomology $H^r({\mathfrak{h}}_{2n+1},{\mathbb{V}})$ for
$r=0,1,2$. Kostant's Theorem~\cite{k} provides such an identification
\[\begin{array}{rcl}
H^0({\mathfrak{h}}_{2n+1},Y_\perp^{\ell-1}{\mathbb{U}})&=&
\enskip\begin{picture}(80,10)
\put(0,0){\line(1,0){35}}
\put(50,0){\makebox(0,0){$\cdots$}}
\put(60,0){\line(1,0){5}}
\put(65,1){\line(1,0){15}}
\put(65,-1){\line(1,0){15}}
\put(0,0){\makebox(0,0){$\bullet$}}
\put(0,7){\makebox(0,0){\scriptsize$\ell-1$}}
\put(20,0){\makebox(0,0){$\bullet$}}
\put(20,7){\makebox(0,0){\scriptsize$0$}}
\put(35,0){\makebox(0,0){$\bullet$}}
\put(35,7){\makebox(0,0){\scriptsize$0$}}
\put(65,0){\makebox(0,0){$\bullet$}}
\put(65,7){\makebox(0,0){\scriptsize$0$}}
\put(80,0){\makebox(0,0){$\bullet$}}
\put(80,7){\makebox(0,0){\scriptsize$0$}}
\put(72,0){\makebox(0,0){$\langle$}}
\end{picture}\qquad\mbox{($n$ nodes)}\\
\rule{0pt}{14pt}H^1({\mathfrak{h}}_{2n+1},Y_\perp^{\ell-1}{\mathbb{U}})&=&
\enskip\begin{picture}(75,10)
\put(0,0){\line(1,0){35}}
\put(45,0){\makebox(0,0){$\cdots$}}
\put(55,0){\line(1,0){5}}
\put(60,1){\line(1,0){15}}
\put(60,-1){\line(1,0){15}}
\put(0,0){\makebox(0,0){$\bullet$}}
\put(0,7){\makebox(0,0){\scriptsize$\ell$}}
\put(15,0){\makebox(0,0){$\bullet$}}
\put(15,7){\makebox(0,0){\scriptsize$0$}}
\put(30,0){\makebox(0,0){$\bullet$}}
\put(30,7){\makebox(0,0){\scriptsize$0$}}
\put(60,0){\makebox(0,0){$\bullet$}}
\put(60,7){\makebox(0,0){\scriptsize$0$}}
\put(75,0){\makebox(0,0){$\bullet$}}
\put(75,7){\makebox(0,0){\scriptsize$0$}}
\put(67,0){\makebox(0,0){$\langle$}}
\end{picture}\\
\rule{0pt}{14pt}H^2({\mathfrak{h}}_{2n+1},Y_\perp^{\ell-1}{\mathbb{U}})&=&
\enskip\begin{picture}(75,10)
\put(0,0){\line(1,0){35}}
\put(45,0){\makebox(0,0){$\cdots$}}
\put(55,0){\line(1,0){5}}
\put(60,1){\line(1,0){15}}
\put(60,-1){\line(1,0){15}}
\put(0,0){\makebox(0,0){$\bullet$}}
\put(0,7){\makebox(0,0){\scriptsize$0$}}
\put(15,0){\makebox(0,0){$\bullet$}}
\put(15,7){\makebox(0,0){\scriptsize$\ell$}}
\put(30,0){\makebox(0,0){$\bullet$}}
\put(30,7){\makebox(0,0){\scriptsize$0$}}
\put(60,0){\makebox(0,0){$\bullet$}}
\put(60,7){\makebox(0,0){\scriptsize$0$}}
\put(75,0){\makebox(0,0){$\bullet$}}
\put(75,7){\makebox(0,0){\scriptsize$0$}}
\put(67,0){\makebox(0,0){$\langle$}}
\end{picture}\end{array}\]
as ${\mathrm{Sp}}(2n,{\mathbb{R}})$-modules as well as the exact locations of
the corresponding induced bundles in the $E_1$-level
\begin{equation}\label{E1level}\raisebox{-45pt}{\begin{picture}(300,80)
\put(0,0){\vector(1,0){100}}
\put(0,0){\vector(0,1){50}}
\put(92,5){\makebox(0,0){$p$}}
\put(5,42){\makebox(0,0){$q$}}
\put(40,70){\makebox(0,0){$\bigodot^{\ell-1}\!\Lambda^1$}}
\put(70,68){\makebox(0,0){$\to$}}
\put(94,70){\makebox(0,0){$\bigodot^{\ell}\!\Lambda^1$}}
\put(125,70){\makebox(0,0){$0$}}
\put(150,70){\makebox(0,0){$\ast$}}
\put(150,55){\makebox(0,0){$0$}}
\put(175,55){\makebox(0,0){$\ast$}}
\put(175,40){\makebox(0,0){$\ddots$}}
\put(202,40){\makebox(0,0){$\ddots$}}
\put(197,25){\makebox(0,0){$Y_\perp^\ell$}}
\put(225,25){\makebox(0,0){$\ast$}}
\put(225,10){\makebox(0,0){$\ddots$}}
\put(252,10){\makebox(0,0){$\ddots$}}
\end{picture}}\end{equation}
where $Y_\perp^\ell$ arises as the cohomology of
\[\Lambda^1\otimes{\mathbb{V}}_\ell\xrightarrow{\,\partial_\perp\,}
\Lambda_\perp^2\otimes{\mathbb{V}}_{\ell-1}\xrightarrow{\,\partial_\perp\,}
\Lambda_\perp^3\otimes{\mathbb{V}}_{\ell-2}.\]
This is the right location to be the target of a differential 
$\bigodot^\ell\!\Lambda^1\to Y_\perp^\ell$ at the $E_\ell$-level. We conclude 
immediately that there is a complex
\[\textstyle\bigodot^{\ell-1}\!\Lambda^1\xrightarrow{\,\nabla\,}
\bigodot^{\ell}\!\Lambda^1\xrightarrow{\,\nabla_\perp^{(\ell)}\,}
Y_\perp^\ell\] 
whose cohomology is the same as that of the original complex
\[Y_\perp^{\ell-1}{\mathbb{U}}\xrightarrow{\,\nabla\,}
\Lambda^1\otimes Y_\perp^{\ell-1}{\mathbb{U}}
\xrightarrow{\,\nabla_\perp\,}
\Lambda_\perp^2\otimes Y_\perp^{\ell-1}{\mathbb{U}}.\]
This is true both locally (which confirms abstractly~\cite{p} that
$\nabla_\perp^{(\ell)}$ is a differential operator) and globally, which is what
we shall use to prove the sufficiency of the condition given in
Theorem~\ref{necessaryonCPn} as follows.
\begin{theorem}\label{sufficiencyonCPn}
Suppose $\omega_{abc\cdots d}$ is a globally defined smooth symmetric 
\mbox{${\ell}$-tensor}
on~${\mathbb{CP}}_n$ for $n\geq2$. Suppose that
$\nabla_\perp^{(\ell)}(\omega_{ab\cdots d})=0$, where $\nabla_\perp^{(\ell)}$
is the differential operator of
Theorem~\ref{necessaryonCPn} defined as the
composition~{\rm(\ref{composition})}. Then there is a smooth symmetric 
$(\ell-1)$-tensor $\phi_{bc\cdots d}$ on ${\mathbb{CP}}_n$ such that
$\omega_{abc\cdots d}=\nabla_{(a}\phi_{bc\cdots d)}$.
\end{theorem}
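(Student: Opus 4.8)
The plan is to reduce the theorem to the global exactness of the coupled complex~(\ref{coupledRS}) at its middle term. By the spectral sequence constructed above, the cohomology of the Bernstein--Gelfand--Gelfand (BGG) complex $\bigodot^{\ell-1}\Lambda^1\xrightarrow{\,\nabla\,}\bigodot^{\ell}\Lambda^1\xrightarrow{\,\nabla_\perp^{(\ell)}\,}Y_\perp^\ell$ agrees---globally as well as locally---with the cohomology of~(\ref{coupledRS}). The hypothesis $\nabla_\perp^{(\ell)}(\omega_{ab\cdots d})=0$ places $\omega$ in the kernel of the second operator of the BGG complex, so finding $\phi_{bc\cdots d}$ with $\omega_{abc\cdots d}=\nabla_{(a}\phi_{bc\cdots d)}$ is precisely the assertion that the middle cohomology of that complex vanishes. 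Hence it suffices to prove that~(\ref{coupledRS}) is exact at $\Lambda^1\otimes Y_\perp^{\ell-1}\mathbb{U}$ on global sections.

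The engine of the argument is the parallel curvature endomorphism $\Psi$ of~(\ref{formofcurvature}). Since $\Phi$ on $\mathbb{U}$ satisfies $\Phi^2=-\mathrm{Id}$ by~(\ref{curvatureofU}), the induced endomorphism $\Psi$ of $Y_\perp^{\ell-1}\mathbb{U}$ is semisimple and $\Psi^2$ is diagonalisable with non-positive eigenvalues; moreover $\Psi$ is covariant constant, so its eigenbundles are $\nabla$-parallel and~(\ref{coupledRS}) splits as a direct sum of complexes indexed by them. It is enough to establish exactness on each summand.

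On $\ker\Psi$ the curvature~(\ref{formofcurvature}) vanishes and the induced connection is flat; as $\mathbb{CP}_n$ is simply-connected this subbundle is globally trivialised by parallel sections, whereupon the corresponding summand of~(\ref{coupledRS}) reduces to a finite number of copies of the uncoupled complex $\Gamma(\mathbb{CP}_n,\Lambda^0)\xrightarrow{\,d\,}\Gamma(\mathbb{CP}_n,\Lambda^1)\xrightarrow{\,d_\perp\,}\Gamma(\mathbb{CP}_n,\Lambda_\perp^2)$, which is exact by Theorem~\ref{desired}. On each eigenbundle where $\Psi^2=\lambda\,\mathrm{Id}$ with $\lambda<0$ the endomorphism $\Psi$ is invertible, and exactness follows by the homotopy already used for~(\ref{minusfour}): if $\nabla_\perp\Omega=0$ then $\nabla_{[a}\Omega_{b]}=J_{ab}\Sigma$, whence~(\ref{formofcurvature}) together with the injectivity of $\Lambda^1\xrightarrow{\,J\wedge\,\cdot\,}\Lambda^3$ forces $\nabla_c\Sigma=\Psi\Omega_c$ and therefore $\Omega_a=\nabla_a(\Psi\Sigma/\lambda)$. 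This settles the case $n\geq 3$.

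For $n=2$ the spectral sequence must instead be built from the Rumin--Seshadri replacement of~(\ref{nextRScomplex}), with its $E_1$-level computed by Proposition~\ref{replacementproposition} in place of Proposition~\ref{heisenbergcohomology}; this comparison once more reduces the theorem to exactness of~(\ref{coupledRS}) at $\Lambda^1\otimes Y_\perp^{\ell-1}\mathbb{U}$, and the eigenbundle argument of the preceding paragraph then applies without change (note that $\Lambda^1\to\Lambda^3$ stays injective, indeed an isomorphism, when $n=2$). I expect the main obstacle to lie not in any single estimate but in the bookkeeping of the reduction: one must check that the flat summand genuinely collapses to copies of the complex of Theorem~\ref{desired} rather than some twisted variant, and that the homotopy on the invertible summands is compatible with the $\perp$-projections built into~(\ref{coupledRS}). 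Once the semisimplicity of $\Psi$ and the attendant parallel splitting are secured, however, both of these become routine.
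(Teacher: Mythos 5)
Your reduction to global exactness of (\ref{coupledRS}) at its middle term, and your treatment of that exactness via the parallel curvature endomorphism $\Psi$, match the paper's argument: the splitting into $\ker\Psi$ (flat connection, trivialised by simple-connectivity, exactness then supplied by Theorem~\ref{desired}) and the part where $\Psi$ is invertible (handled by the homotopy $\Omega_a=\nabla_a(\Psi\Sigma/\lambda)$) is precisely what the paper does, except that the paper lumps all non-zero eigenvalues together as ${\mathbb{V}}_{\not=0}$ and inverts $\Psi$ there rather than working eigenvalue by eigenvalue; the two are equivalent. Your $n=2$ modification via Proposition~\ref{replacementproposition} and the complex (\ref{highercoupling}) is also as in the paper.

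There is, however, one genuine gap. The hypothesis of the theorem is the vanishing of the \emph{specific} operator $\nabla_\perp^{(\ell)}$ defined as the composition (\ref{composition}), that is, $\perp$ applied to the operator $\nabla^{(\ell)}$ whose formula was imported verbatim from ${\mathbb{RP}}_n$. The spectral sequence of the filtered complex (\ref{coupledRS}) produces \emph{some} differential $\bigodot^\ell\!\Lambda^1\to Y_\perp^\ell$ at the $E_\ell$-level, and your first paragraph silently identifies that differential with the operator in the hypothesis. This identification is not automatic; it is the first of the two facts the paper's proof explicitly verifies. The paper's route: pull-back along a model embedding $\iota:{\mathbb{RP}}_n\hookrightarrow{\mathbb{CP}}_n$ intertwines the connections on ${\mathbb{U}}$ and on ${\mathbb{T}}$ (compare (\ref{newtractors}) with (\ref{standardtractorconnection})), hence intertwines the two spectral-sequence constructions, so both the $E_\ell$-differential and the composition (\ref{composition}) pull back to $\nabla^{(\ell)}$ on every model embedding; Corollary~\ref{linearalgebra} then says that an operator valued in $Y_\perp^\ell$ is determined by these pull-backs, forcing the two operators to coincide. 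Without this step your argument establishes exactness of a BGG complex whose second operator need not a priori be the one named in the statement. Everything else in your proposal is sound.
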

\begin{proof}
For $n\geq 3$, there remain just two facts to verify. The first is that, as our
notation already indicates, the differential operator $\nabla_\perp^{(\ell)}$
arising at the $E_{\ell}$-level of the spectral sequence of the filtered
complex (\ref{coupledRS}) coincides with the composition~(\ref{composition}). 
The second is that 
\begin{equation}\label{secondfact}
\Gamma({\mathbb{CP}}_n,Y_\perp^{\ell-1}{\mathbb{U}})\!\xrightarrow{\nabla}\!
\Gamma({\mathbb{CP}}_n,\Lambda^1\otimes Y_\perp^{\ell-1}{\mathbb{U}})
\!\xrightarrow{\nabla_\perp}\!
\Gamma({\mathbb{CP}}_n,\Lambda_\perp^2\otimes Y_\perp^{\ell-1}{\mathbb{U}})
\end{equation}
is exact. The first of these two facts follows by comparing the spectral
sequence constructions of $\nabla_\perp^{(\ell)}$ on ${\mathbb{CP}}_n$ and
$\nabla^{(\ell)}$ on~${\mathbb{RP}}_n$ for the model embeddings
$\iota:{\mathbb{RP}}_n\hookrightarrow{\mathbb{CP}}_n$. Recall, as in
Proposition~\ref{Two}, that each $\iota$ is totally geodesic and Lagrangian. If
$\Sigma=(\sigma,\mu,\rho)$ is a section of
${\mathbb{U}}\equiv\Lambda^0\oplus\Lambda^1\oplus\Lambda^0$ on
${\mathbb{CP}}_n$ then we can define its `pull-back' $\iota^*\Sigma$ to
${\mathbb{RP}}_n$ to be the section $(\iota^*\sigma,\iota^*\mu)$ of the bundle
${\mathbb{T}}\equiv\Lambda^0\oplus\Lambda^1$ on~${\mathbb{RP}}_n$. {From} the
formul{\ae} (\ref{newtractors}) and (\ref{standardtractorconnection}) for the
connections on ${\mathbb{U}}$ and~${\mathbb{T}}$, we see that pull-back
intertwines these connections,
i.e.~$\iota^*(\nabla\Sigma)=\nabla(\iota^*\Sigma)$. The same is therefore true 
of pull-back from 
\[Y_\perp^{\ell-1}{\mathbb{U}}=
\underbrace{\begin{picture}(54,12)
\put(0,0){\line(1,0){54}}
\put(0,6){\line(1,0){54}}
\put(0,12){\line(1,0){54}}
\put(0,0){\line(0,1){12}}
\put(6,0){\line(0,1){12}}
\put(12,0){\line(0,1){12}}
\put(18,0){\line(0,1){12}}
\put(30,3){\makebox(0,0){$\cdots$}}
\put(30,9){\makebox(0,0){$\cdots$}}
\put(42,0){\line(0,1){12}}
\put(48,0){\line(0,1){12}}
\put(54,0){\line(0,1){12}}
\end{picture}}_{\mbox{\scriptsize$\ell-1$ columns}}{}_\perp
\mbox{\LARGE${\mathbb{U}}$}\quad\mbox{to}\quad
\underbrace{\begin{picture}(54,12)
\put(0,0){\line(1,0){54}}
\put(0,6){\line(1,0){54}}
\put(0,12){\line(1,0){54}}
\put(0,0){\line(0,1){12}}
\put(6,0){\line(0,1){12}}
\put(12,0){\line(0,1){12}}
\put(18,0){\line(0,1){12}}
\put(30,3){\makebox(0,0){$\cdots$}}
\put(30,9){\makebox(0,0){$\cdots$}}
\put(42,0){\line(0,1){12}}
\put(48,0){\line(0,1){12}}
\put(54,0){\line(0,1){12}}
\end{picture}}_{\mbox{\scriptsize$\ell-1$ columns}}\,
\mbox{\LARGE${\mathbb{T}}$}.\]
Hence, the resulting operators
\[\textstyle\bigodot^{\ell}\!\Lambda^1\xrightarrow{\,\nabla_\perp^{(\ell)}\,}
Y_\perp^\ell\enskip\mbox{on }{\mathbb{CP}}_n\quad\mbox{and}\quad
\bigodot^{\ell}\!\Lambda^1\xrightarrow{\,\nabla^{(\ell)}\,}Y^\ell
\enskip\mbox{on }{\mathbb{RP}}_n\] 
are also related by pull-back. As this is true for every model embedding
$\iota:{\mathbb{RP}}_n\hookrightarrow{\mathbb{CP}}_n$,
Corollary~\ref{linearalgebra} now shows that $\nabla_\perp^{(\ell)}$ is
characterised by this property. By construction, (\ref{composition}) also has 
this property and so the two operators agree, as required.

Now we must show that (\ref{secondfact}) is exact. The following reasoning
applies to any bundle ${\mathbb{V}}$ induced from ${\mathbb{U}}$ by an
irreducible representation of ${\mathrm{Sp}}(2(n+1),{\mathbb{R}})$ including
$Y_\perp^{\ell-1}{\mathbb{U}}$ (recall that it is induced
by~(\ref{YbundleonCPn})). In particular, when applied to 
${\mathbb{V}}=\Lambda_\perp^2{\mathbb{U}}$ it puts the reasoning in the proof
of Theorem~\ref{prototypesufficiencyonCPn} in proper context. The endomorphism
\[\left[\begin{array}c\sigma\\ \mu_c\\ \rho\end{array}\right]
\stackrel{\Phi}{\longmapsto}
\left[\begin{array}c\rho\\ J_c{}^d\mu_d\\ -\sigma\end{array}\right]\]
of ${\mathbb{U}}$ defined by (\ref{symplecticEinstein}) is preserved by the
connection on~${\mathbb{U}}$. Evidently, it also satisfies
$\Phi^2=-{\mathrm{Id}}$. Finally, we compute
\[\langle\Phi(\sigma,\mu_a,\rho),(\tilde\sigma,\tilde\mu_b,\tilde\rho)\rangle=
\sigma\tilde\sigma+g^{ab}\mu_a\tilde\mu_b+\rho\tilde\rho\]
where $\langle\enskip,\enskip\rangle$ is the skew form on ${\mathbb{U}}$
defined by~(\ref{symplecticreduction}) and notice that this is symmetric and of
Lorentzian signature. Hence, the structure group for ${\mathbb{U}}$ naturally
reduces from ${\mathrm{Sp}}(2(n+1),{\mathbb{R}})$ to ${\mathrm{SU}}(2n+1,1)$
with $\Phi$ providing the complex structure. Accordingly, the
decomposition (\ref{decomposition}) may be seen as follows. The complexified 
bundles split in familiar fashion as
\[{\mathbb{CU}}=\Lambda^{1,0}{\mathbb{U}}\oplus\Lambda^{0,1}{\mathbb{U}}\]
and 
\[\Lambda^2{\mathbb{CU}}=\Lambda^{2,0}{\mathbb{U}}\oplus
\left(\Lambda_\perp^{1,1}{\mathbb{U}}\oplus\Lambda^0{\mathbb{U}}\right)\oplus
\Lambda^{0,2}{\mathbb{U}}\]
as complex eigenspaces under the action of $\Phi$. Then (\ref{decomposition}) 
is simply the real counterpart of the complex decomposition
\[\Lambda_\perp^2{\mathbb{CU}}=
\Lambda_\perp^{1,1}{\mathbb{U}}\oplus
\left(\Lambda^{2,0}{\mathbb{U}}\oplus\Lambda^{0,2}{\mathbb{U}}\right).\]
For our purposes, a sufficient counterpart to (\ref{decomposition}) in general 
is to write
\[{\mathbb{V}}={\mathbb{V}}_0\oplus{\mathbb{V}}_{\not=0}\]
where ${\mathbb{V}}_0\equiv\ker\Psi:{\mathbb{V}}\to{\mathbb{V}}$ (and,
following~(\ref{formofcurvature}), we are writing $\Psi$ for endomorphism of
${\mathbb{V}}$ induced by $\Phi\in\End({\mathbb{U}})$) and
${\mathbb{V}}_{\not=0}$ is such that ${\mathbb{CV}}_{\not=0}$ collects all the
non-zero eigenspaces of $\Psi$. In particular, notice that
$\Psi|_{{\mathbb{V}}_{\not=0}}:{\mathbb{V}}_{\not=0}\to{\mathbb{V}}_{\not=0}$
is invertible. Now we are in a position to show that
\begin{equation}\label{Vexact}
\Gamma({\mathbb{CP}}_n,{\mathbb{V}})\xrightarrow{\,\nabla\,}
\Gamma({\mathbb{CP}}_n,\Lambda^1\otimes{\mathbb{V}})
\xrightarrow{\,\nabla_\perp\,}
\Gamma({\mathbb{CP}}_n,\Lambda_\perp^2\otimes{\mathbb{V}})\end{equation}
in general, and hence (\ref{secondfact}) in particular, is exact. As in the 
proof of Theorem~\ref{prototypesufficiencyonCPn}, this breaks into two cases
\[\begin{array}{rcccl}
\Gamma({\mathbb{CP}}_n,{\mathbb{V}}_0)&\xrightarrow{\,\nabla\,}&
\Gamma({\mathbb{CP}}_n,\Lambda^1\otimes{\mathbb{V}}_0)
&\xrightarrow{\,\nabla_\perp\,}&
\Gamma({\mathbb{CP}}_n,\Lambda_\perp^2\otimes{\mathbb{V}}_0)\\
\Gamma({\mathbb{CP}}_n,{\mathbb{V}}_{\not=0})&\xrightarrow{\,\nabla\,}&
\Gamma({\mathbb{CP}}_n,\Lambda^1\otimes{\mathbb{V}}_{\not=0})
&\xrightarrow{\,\nabla_\perp\,}&
\Gamma({\mathbb{CP}}_n,\Lambda_\perp^2\otimes{\mathbb{V}}_{\not=0}),
\end{array}\]
the counterparts of (\ref{zero}) and~(\ref{minusfour}). The first of these
complexes is exact as a consequence of Theorem~\ref{desired} coupled with the
flat connection $\nabla|_{{\mathbb{V}}_0}$. The curvature of 
$\nabla|_{{\mathbb{V}}_{\not=0}}$ has the form (\ref{formofcurvature}) with 
$\Psi\in\End({\mathbb{V}}_{\not=0})$ crucially being invertible. Exactness of 
the corresponding complex is established as follows. Suppose 
$\Omega\in\Gamma({\mathbb{CP}}_n,\Lambda^1\otimes{\mathbb{V}}_{\not=0})$ 
satisfies $\nabla_\perp\Omega=0$. Precisely, this means that 
\[\nabla_{[a}\Omega_{b]}=J_{ab}\Sigma\quad\mbox{for some }
\Sigma\in\Gamma({\mathbb{CP}}_n,{\mathbb{V}}_{\not=0}).\]
Differentiating again, we find that  
\[J_{[ab}\nabla_{c]}\Sigma=\nabla_{[a}(J_{bc]}\Sigma)
=\nabla_{[a}\nabla_b\Omega_{c]}=J_{[ab}\Psi\Omega_{c]},\]
the last equality being a consequence of (\ref{formofcurvature}) as applied to
the vector bundle~${\mathbb{V}}_{\not=0}$. Since $J_{ab}$ is non-degenerate,
we conclude that $\nabla_c\Sigma=\Psi\Omega_c$. But the Bianchi identity 
$\nabla_{[a}(J_{bc]}\Psi)=0$ implies that~$\nabla_a\Psi=0$. Finally, recall 
that $\Psi$ is invertible, whence
\[\nabla_a(\Psi^{-1}\Sigma)=\Psi^{-1}\nabla_a\Sigma=\Psi^{-1}\Psi\Omega_a
=\Omega_a,\]
which is exactly as needed to complete the proof of exactness
of~(\ref{Vexact}).

The proof of Theorem~\ref{sufficiencyonCPn} is complete save for the case $n=2$
for which a modification to the argument is needed as follows. We replace the 
complex (\ref{coupledRS}) by 
\begin{equation}\label{highercoupling}
Y_\perp^{\ell-1}{\mathbb{U}}\xrightarrow{\,\nabla\,}
\Lambda^1\otimes Y_\perp^{\ell-1}{\mathbb{U}}
\xrightarrow{\,\nabla_\perp\,}
\Lambda_\perp^2\otimes Y_\perp^{\ell-1}{\mathbb{U}}
\xrightarrow{\,\nabla_\perp^{(2)}\,}
\Lambda_\perp^2\otimes Y_\perp^{\ell-1}{\mathbb{U}},\end{equation}
where $\nabla_\perp^{(2)}$ is defined by
\[(\nabla_\perp^{(2)}\xi)_{ab}=\nabla_{[a}\mu_{b]}-\Psi\xi_{ab}\quad
\mbox{where }\mu_{[a}J_{bc]}=\nabla_{[a}\xi_{bc]}\]
as a coupled version of the operator 
$d_\perp^{(2)}:\Lambda_\perp^2\to\Lambda_\perp^2$ defined 
by~(\ref{defofdperp2}). It is easy to check that this operator is well-defined
and that (\ref{highercoupling}) is a complex. It is naturally filtered and 
there is spectral sequence whose $E_0$-level has the form
\[\begin{picture}(300,100)
\put(0,0){\vector(1,0){100}}
\put(0,0){\vector(0,1){50}}
\put(92,5){\makebox(0,0){$p$}}
\put(5,42){\makebox(0,0){$q$}}
\put(20,90){\makebox(0,0){${\mathbb{V}}_0$}}
\put(70,60){\makebox(0,0){${\mathbb{V}}_1$}}
\put(120,30){\makebox(0,0){${\mathbb{V}}_2$}}
\put(140,15){\makebox(0,0){$\ddots$}}
\put(70,90){\makebox(0,0){$\Lambda^1\otimes{\mathbb{V}}_0$}}
\put(120,60){\makebox(0,0){$\Lambda^1\otimes{\mathbb{V}}_1$}}
\put(170,30){\makebox(0,0){$\Lambda^1\otimes{\mathbb{V}}_2$}}
\put(190,15){\makebox(0,0){$\ddots$}}
\put(120,90){\makebox(0,0){$\Lambda_\perp^2\otimes{\mathbb{V}}_0$}}
\put(170,60){\makebox(0,0){$\Lambda_\perp^2\otimes{\mathbb{V}}_1$}}
\put(220,30){\makebox(0,0){$\Lambda_\perp^2\otimes{\mathbb{V}}_2$}}
\put(240,15){\makebox(0,0){$\ddots$}}
\put(220,60){\makebox(0,0){$\Lambda_\perp^2\otimes{\mathbb{V}}_0$}}
\put(270,30){\makebox(0,0){$\Lambda_\perp^2\otimes{\mathbb{V}}_1$}}
\put(290,15){\makebox(0,0){$\ddots$}}
\put(70,75){\makebox(0,0){$\uparrow\scriptstyle\partial$}}
\put(124,75){\makebox(0,0){$\uparrow\scriptstyle\partial_\perp$}}
\put(120,45){\makebox(0,0){$\uparrow\scriptstyle\partial$}}
\put(174,45){\makebox(0,0){$\uparrow\scriptstyle\partial_\perp$}}
\put(224,45){\makebox(0,0){$\uparrow\scriptstyle\partial_\perp^{(2)}$}}
\put(274,15){\makebox(0,0){$\uparrow\scriptstyle\partial_\perp^{(2)}$}}
\end{picture}\]
replacing~(\ref{E0level}), where $\partial_\perp^{(2)}$ is as in 
Proposition~\ref{replacementproposition}, which is now used together with 
Kostant's theorem~\cite{k} to identify the $E_1$-level as~(\ref{E1level}), 
just as before. The rest of the proof is unchanged. 
\end{proof}

\addtolength{\textheight}{18pt}
\section{Proof of the main theorem} The proof of Theorem~\ref{maintheorem} is
now a straightforward application of the machinery we have developed. Suppose
$\omega_{ab\cdots c}$ is a smooth symmetric $\ell$-tensor, globally defined on
${\mathbb{CP}}_n$ and having zero energy. Then the same is true of
$\iota^*\omega_{ab\cdots c}$ for any model embedding
$\iota:{\mathbb{RP}}_n\hookrightarrow{\mathbb{CP}}_n$. The X-ray transform 
on ${\mathbb{RP}}_n$ is well-understood and it is proved in~\cite{be} that
$\iota^*\omega_{ab\cdots c}$ is of the form $\nabla_{(a}\phi_{b\cdots c)}$. By 
Theorem~\ref{BGGonRP} we conclude that 
$\nabla^{(\ell)}(\iota^*\omega_{ab\cdots c})=0$. As this is true for all 
model embeddings, we conclude by Corollary~\ref{linearalgebra} that 
$\nabla_\perp^{(\ell)}(\omega_{ab\cdots c})=0$. 
Theorem~\ref{sufficiencyonCPn} 
finishes our proof.

\end{document}